% LaTeX
% ----------------------------------------------------------------
% AMS-LaTeX Paper ************************************************
% **** -----------------------------------------------------------

\documentclass[reqno]{amsart}
\usepackage{amsmath,amssymb}
\usepackage{hyperref}
\usepackage{graphicx}

% THEOREMS -------------------------------------------------------
\newtheorem{thm}{Theorem}[section]

\newtheorem{lem}[thm]{Lemma}

\newtheorem{defn}[thm]{Definition}
\newtheorem{rem}[thm]{Remark}

\numberwithin{equation}{section}

\makeatletter
\newcommand{\arxiv}[1]{\href{http://arxiv.org/abs/#1}{arXiv:#1}}
\newcommand*{\mailto}[1]{\href{mailto:#1}{\nolinkurl{#1}}}
\newcommand{\rmnum}[1]{\romannumeral #1}
\newcommand{\Rmnum}[1]{\expandafter\@slowromancap\romannumeral #1@}
\newcommand{\R}{{\mathbb R}}
\newcommand{\Z}{{\mathbb Z}}
\newcommand{\N}{{\mathbb N}}
\newcommand{\K}{{\mathbb K}}
\makeatother

\def\qq{\qquad}

\def\x#1{(\ref{#1})}
\def\xx#1{{\rm (\ref{#1})}}

\def\disp{\displaystyle}
\def\rd{{\rm d}}

\begin{document}
\title{Gap Labelling for Almost Periodic Sturm-Liouville Operators}
	
\author[G. Teschl]{Gerald Teschl}
\address{Faculty of Mathematics\\ University of Vienna\\
Oskar-Morgenstern-Platz 1\\ 1090 Wien\\ Austria}
\email{\mailto{Gerald.Teschl@univie.ac.at}}
\urladdr{\url{https://www.mat.univie.ac.at/~gerald/}}

\author[Y. Wang]{Yifei Wang}
\address{State Key Laboratory of Mathematical Sciences\\ Academy of Mathematics and Systems Science\\  Chinese Academy of Sciences\\ 100190  Beijing\\ China\\ School of Mathematical Sciences\\ University of Chinese Academy of Sciences\\ 100049 Beijing\\ China}
\email{\mailto{wangyifei@amss.ac.cn}}

\author[B. Xie]{Bing Xie}
\address{School of Mathematics and Statistics\\ Shandong University\\ 264209 Weihai\\ China}
\email{\mailto{xiebing@sdu.edu.cn}}

\author[Z. Zhou]{Zhe Zhou}
\address{State Key Laboratory of Mathematical Sciences\\ Academy of Mathematics and Systems Science\\  Chinese Academy of Sciences\\ 100190  Beijing \\ China}
\email{\mailto{zzhou@amss.ac.cn}}
%\thanks{}
\keywords{Sturm-Liouville operator, almost periodic function, rotation number, gap label.}
\subjclass[2020]{Primary 34L05, 81Q10; Secondary 34B20, 34B27}

\begin{abstract}	
In this paper, we introduce a rotation number for almost periodic Sturm-Liouville operators in the spirit of Johnson and Moser. We then prove the gap labelling theorem in terms of rotation numbers for the operator in question. To do this, we rigorously prove the almost periodicity of Green's functions.
\end{abstract}
	
\maketitle
	
\tableofcontents

\section{Introduction}

\subsection{Background}

In their landmark paper \cite{JM1982}, Johnson and Moser introduced the concept of a rotation number for Schr\"odinger operators with almost periodic potentials. A brief description is as follows. For $\lambda\in\mathbb{R}$, let $\phi$ be a solution of the differential equation 
	\[
	H\phi:=-\frac{\mathrm{d}^2\phi}{\mathrm{d}x^2}+q(x)\phi=\lambda\phi,
	\]
where $H$ denotes the Schr\"odinger operator with a Bohr almost periodic potential $q(x)$. The rotation number for $H$ is defined as the average winding per unit of the associated two-vector given by the solution $\phi$ and its derivative $\phi^\prime$ around the origin in the $(\phi^\prime, \phi)$-plane. They used the so-called \textsl{rotation number} to establish the gap labelling theorem and characterized the spectrum of the operator in question. 

Regarding the gap labelling theorem, Jean Bellissard has made fundamental contributions by establishing a topological framework for spectral gaps in a variety of settings, including Schr\"odinger operators \cite{B1982, BS1982, B1992, BBG1992}, automatic sequences \cite{B1992, B1993}, and tiling dynamical systems \cite{BBG2006, SB2009}. Bellissard showed that each spectral gap is naturally labelled by an element of the $K_0$-group of the associated $C^*$-algebra, thereby revealing spectral gaps as stable topological invariants rather than purely analytical features. For a comprehensive review, see also \cite{B1986}.

Recently, the concept of the rotation number has been extended to more general potentials, such as the Stepanov almost periodic functions, almost periodic functions with $\delta$-interactions, and the so-called $\alpha$-norm almost periodic measures; see \cite{Z2015, DZ2021, DMZZ2024}. However, the spectral analysis of the corresponding operators via the rotation number method has yet to be developed.

%However, the spectral theory of the relevant operators is not addressed in those papers, and it is only mentioned that the detail on the spectrum will be discussed in future work.
	
In this paper, we consider the almost periodic Sturm-Liouville operators as follows.
	\begin{equation}  \label{sl}
		\begin{split}
			L_{\frac{1}{p},q,w}:\quad \mathcal{D}(L_{\frac{1}{p},q,w})&\rightarrow \mathcal{L}^2(\mathbb{R},w(x)\mathrm{d}x)\\
			f:=f(x) &\mapsto \tau_{\frac{1}{p},q,w}f:= \frac{1}{w(x)}\left( -\frac{\mathrm{d}}{\mathrm{d}x}p(x)\frac{\mathrm{d}f(x)}{\mathrm{d}x}+q(x)f(x)\right) ,
		\end{split}
	\end{equation}
where $ \mathcal{D}(L_{\frac{1}{p},q,w}) \subset \mathcal{L}^2(\mathbb{R},w(x)\mathrm{d}x)$ is a suitable domain so that $L_{\frac{1}{p},q,w}$ is self-adjoint, and $p:=p(x), q:=q(x), w:=w(x)$ are Bohr almost periodic functions. For $\lambda\in\mathbb{R}$, let $\phi$ be a solution of the differential equation 
	\begin{equation} \label{sp-sl}
		\tau_{\frac{1}{p},q,w}\phi=\lambda\phi.
	\end{equation}
The rotation number is defined as the average winding per unit of the associated two-vector given by the solution $\phi$ and its quasi-derivative $p\phi^\prime$ around the origin in the $(p\phi^\prime, \phi)$-plane. The spectral analysis of the almost periodic Sturm-Liouville operators will be addressed in detail via the rotation number method.
	
A typical model of almost periodic Sturm-Liouville operators is the case where $p, q, w$ are periodic functions with the same period. Research on periodic Sturm-Liouville operators is extensive, such as \cite{BR2004, MYZ2010, BSTT2023} and references therein. However, unlike Schr\"odinger operators, the Sturm-Liouville operator involves three coefficients, which means it may not be periodic even if all three coefficients are periodic. This may have an impact on the corresponding spectrum. For example, consider the Sturm-Liouville operator with all coefficient functions having the same period $T>0$. Using the method in \cite{Z2001}, one can deduce that all periodic and anti-periodic eigenvalues agree with the endpoints of $\left\lbrace \lambda\in\mathbb{R}: \rho(\lambda)=\frac{k\pi}{T}, k\in\mathbb{Z}\right\rbrace $, where $\rho(\lambda)$ denotes the rotation number. Figure \ref{figp} shows an intuitive example with			 	
	$$
	p(x)=\frac{1}{\sin x + 2},\quad q(x)=2\cos x, \quad w(x)=-\cos x + 2,
	$$ 
having the same period $2\pi$. Platforms at $\lambda=\frac{k}{2}, k\in\mathbb{Z}$, are observable. Replace $q(x)$ with $2\cos (\sqrt{2}x)$. Then $p, q, w$ are still periodic functions but the period of $p$ and $w$ is rationally independent of that of $q$. As shown in Figure \ref{figap}, the platforms are no longer regularly distributed at $\frac{\mathbb{Z}}{2}$.
\begin{figure}[hbt!]
	\centering
	\includegraphics[width=.66\textwidth]{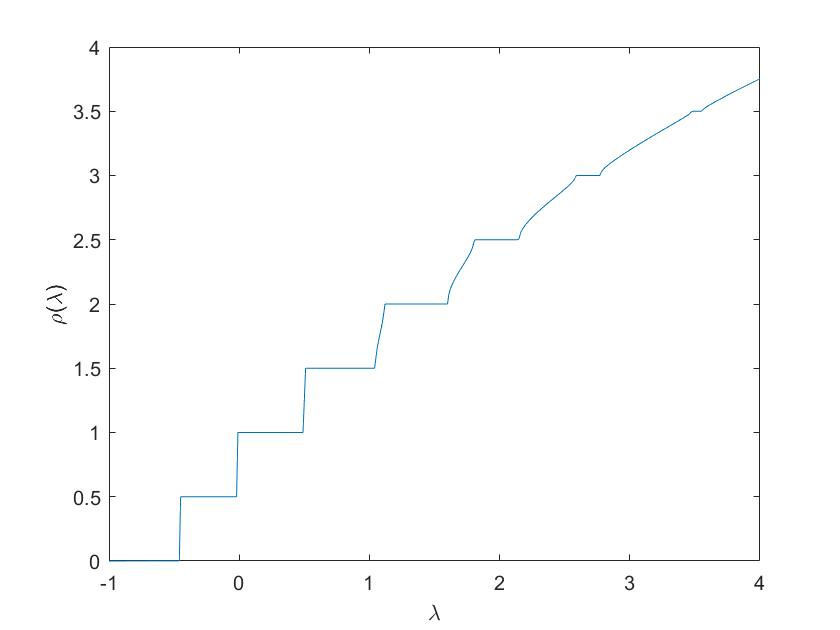}
	\caption{$p(x)=\frac{1}{\sin x + 2}$, $q(x)=2\cos x$, $w(x)=-\cos x + 2$} \label{figp}
\end{figure}
\begin{figure}[hbt!]
	\centering
	\includegraphics[width=.66\textwidth]{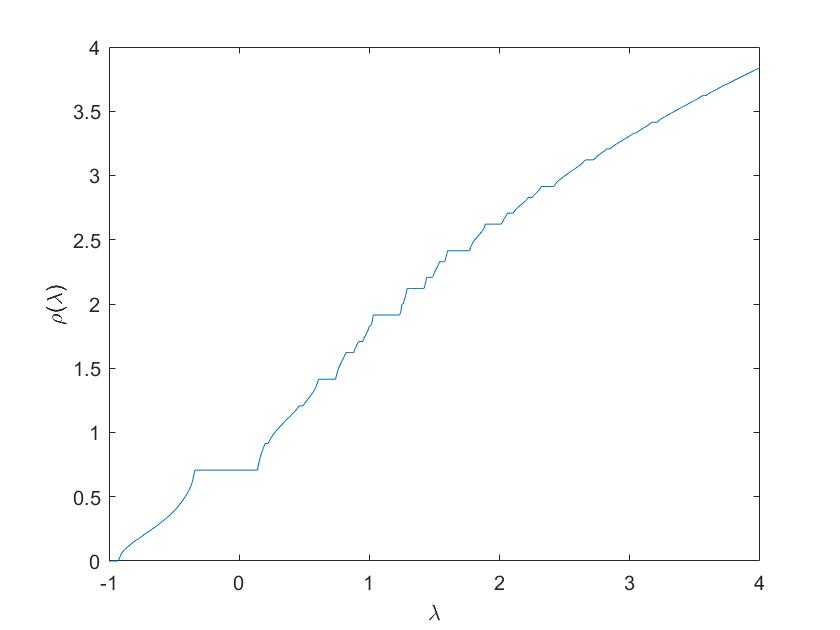}
	\caption{$p(x)=\frac{1}{\sin x + 2}$, $q(x)=2\cos (\sqrt{2}x)$, $w(x)=-\cos x + 2$} \label{figap}
\end{figure}

The Sturm-Liouville operator \x{sl} with
	$$
	p(x)=\frac{1}{\sin x + 2},\quad q(x)=2\cos (\sqrt{2}x), \quad w(x)=-\cos x + 2,
	$$  
is almost periodic rather than periodic. It is reasonable to expect that almost periodic Sturm-Liouville operators will have a wide range of applications, with their spectra exhibiting a richer array of phenomena. For instance, the spectrum of almost periodic Schr\"odinger operators displays remarkably complex behavior; see the survey by Simon \cite{Si2000} and the references therein.

Besides the paper \cite{JM1982} that we mentioned in the beginning, there are also quite a few papers to study the spectrum via the rotation number method. For instance, Giachett and Johnson \cite{GJ1984} studied two dimensional linear systems with bounded coefficients and proved the rotation number increases exactly on the spectrum. Sun \cite{S1991} studied one dimensional stationary ergodic Dirac operators on a probability space, and proved the relationship that the increasing points of the rotation number are exactly the spectrum for almost everywhere in the sample space. Fabbri, Johnson, and \'N\~unez \cite{FJN20031, FJN20032} studied non-autonomous linear Hamiltonian systems and showed that, under certain assumptions, the rotation number remains constant on open intervals where an exponential dichotomy exists. Consequently, the rotation number can be used to label the different gaps of the spectrum. For a comprehensive list of related works, see the book by Johnson, Obaya, Novo, \'N\~unez and Fabbri \cite{JONNF2016} and the references therein.

\subsection{The setting and main results}
	
Let us now describe the setting we consider in more details. Let $f: \mathbb{R}\rightarrow\mathbb{K}$ be a continuous function. Denote the shift of $f$ by
	\begin{equation} \label{sft-f}
		f\cdot t:=f(\cdot+t), \qquad \mbox{for any~}t \in \R.
	\end{equation}
We say that $f$ is \textsl{Bohr almost periodic} if one can extract a sub-sequence $\left\lbrace t_n\right\rbrace$ from any sequence $\left\lbrace \tilde{t}_n\right\rbrace\subseteq\mathbb{R}$ such that $\lim\limits_{n\rightarrow\infty}f\cdot t_n$ exists uniformly on the real axis. The space of all such functions is denoted by $\mathcal{AP}(\mathbb{R},\mathbb{K})$. It is well known that $\mathcal{AP}(\mathbb{R},\mathbb{K})$ is a Banach space with respect to the uniform norm $\|\cdot\|_{\infty}$; see \cite{Fink}. 
	
For any $f\in\mathcal{AP}(\mathbb{R},\mathbb{K})$, one can associate $f$ with the so-called \textsl{mean value} as
	\begin{equation*} 
		\mathrm{M}_x(f):= \lim\limits_{t\rightarrow+\infty}\frac{1}{t}\int_{0}^{t}f(x)\,\mathrm{d}x.
	\end{equation*}
The limit is well-defined by the definition of Bohr almost periodic functions. For $\lambda \in \mathbb{R}$, we denote the corresponding \textsl{Fourier coefficient} as
	\[
	\hat{f}(\lambda):=\mathrm{M}_x\big(f(x) \mathrm{e}^{-\mathrm{i}\lambda x}\big),
	\]
and define the set of \textsl{Fourier exponents} of $f$ by $\exp(f):=\{\lambda \in \mathbb{R}: \hat{f}(\lambda) \neq 0\}.$ The smallest additive group of $\R$ containing $\exp(f)$ is denoted by $\mathcal{M}_f$. We call it the \textsl{frequency module} of $f$. The \textsl{hull} of $f$ is defined by
	$$
	\mathrm{E}(f):=\overline{\left\lbrace f\cdot t: t\in\mathbb{R} \right\rbrace }^{\|\cdot\|_{\infty}}.
	$$ 
It is well known that $\mathrm{E}(f)$ is a compact and minimal set with the Haar measure, denoted by $\mu_{\mathrm{E}(f)}$, being the unique invariant measure of $\mathrm{E}(f)$ under the shift $f\mapsto f\cdot t$. For our purpose, we introduce the following space
	\begin{equation} \label{ap+}
		\mathcal{AP}_+(\mathbb{R},\mathbb{R}):=\big\{f \in \mathcal{AP}(\mathbb{R},\mathbb{R}): \text{any function}\; \widetilde{f}  \in \mathrm{E}(f) \; \text{is positive}\big\}.
	\end{equation}
Note that $\mathcal{AP}_+(\mathbb{R},\mathbb{R})$ is not a Banach space, and $\mathcal{AP}_+(\mathbb{R},\mathbb{R}) \neq \mathcal{AP}(\mathbb{R},\mathbb{R}_+)$ because of the example $\cos(x)+\cos(\sqrt{2}x)+2$.
	
From now on, we always assume that 
	$$
	p,w\in \mathcal{AP}_+(\mathbb{R},\mathbb{R}),\; \text{and} \; q \in\mathcal{AP}(\mathbb{R},\mathbb{R}), \eqno(*)
	$$
and consider the almost periodic Sturm-Liouville operator \x{sl}. It is natural to require that $p$ and $w$ are positive. However, this is not sufficient. When one studies almost periodic differential equations, the relevant equations in the hull also need to be considered. Like the Favard's theorem \cite[Theorem 6.3]{Fink}, when confirming the existence of almost periodic solutions, one must consider all the non-trivial solutions in the homogeneous hull of the original almost periodic system. That is why $\mathcal{AP}_+(\mathbb{R},\mathbb{R})$ was introduced in (\ref{ap+}). If $p$ and $w$ degenerate as periodic functions, then the assumption is equivalent to the positivity of $p$ and $w$ themselves.  

For the sake of simplicity, we take the following notation
	\begin{equation} \label{v}
		\textbf{v}:=\Big(\frac{1}{p},q,w\Big) \in \mathcal{AP}_*(\R,\R^3):= \mathcal{AP}_+(\mathbb{R},\mathbb{R}) \times \mathcal{AP}(\mathbb{R},\mathbb{R})  \times \mathcal{AP}_+(\mathbb{R},\mathbb{R}),
	\end{equation}
where $\frac{1}{p}$ is well-defined; see Lemma \ref{lm-1/f} \romannumeral2). Let $\phi(x)$ be any non-trivial solution of \x{sp-sl}. We consider a continuous branch of the following argument as
	\begin{equation*}
		\theta_\lambda\left(x;{\bf{v}} \right):=\arg\big(p(x)\phi^\prime(x)+\mathrm{i}\, \phi(x)\big).
	\end{equation*} 
Then we have the first main result.
\begin{thm} \label{thm-rn}
The limit
	\begin{equation*} 
		\lim_{x \to +\infty}\frac{\theta_\lambda(x;{\bf{v}})-\theta_\lambda(0;{\bf{v}})}{x}
	\end{equation*}
exists and is independent of the choice of solutions. We call it the rotation number of \xx{sp-sl}, and denote it by $\rho\left(\lambda,{\bf{v}}\right)$.
\end{thm}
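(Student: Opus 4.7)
The plan is to reduce \x{sp-sl} to a Pr\"ufer-type scalar ODE for $\theta_\lambda$, lift it to a skew-product flow $\Psi_t$ over the translation flow on $\mathrm{E}(\mathbf{v})$, and then exploit the order-preserving property of the lifted ODE together with the unique ergodicity of the hull.

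Introducing the quasi-derivative $u:=p\phi'$, any non-trivial solution of \x{sp-sl} satisfies $\phi'=u/p$ and $u'=(q-\lambda w)\phi$. Writing $u+\mathrm{i}\phi=r\mathrm{e}^{\mathrm{i}\theta_\lambda}$ and differentiating yields
\[
\theta_\lambda'(x)=\tfrac{1}{p(x)}\cos^2\theta_\lambda(x)+(\lambda w(x)-q(x))\sin^2\theta_\lambda(x)=:F_\lambda(\theta_\lambda(x),\mathbf{v}(x)),
\]
where $F_\lambda$ is Lipschitz and $\pi$-periodic in $\theta$, so it descends to a continuous function on $\R P^1\times\mathrm{E}(\mathbf{v})$. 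Correspondingly one obtains a skew-product flow $\Psi_t$ on the compact cylinder $\mathrm{E}(\mathbf{v})\times\R P^1$ covering the translation flow. The independence from the choice of solution follows by monotonicity: trajectories of the scalar ODE in the continuous lift cannot cross and are translated by $\pi$ under $\theta\mapsto\theta+\pi$, so lifts $\theta_\lambda^{(1)},\theta_\lambda^{(2)}$ associated with linearly independent solutions can be labeled so that $\theta_\lambda^{(2)}(x)-\theta_\lambda^{(1)}(x)\in(0,\pi)$ for every $x\in\R$. Dividing by $x$ then forces the Ces\`{a}ro quotients to share any limit, while linearly dependent solutions give identical angles modulo $\pi$.

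For existence, I would use the cocycle identity
\[
\frac{\theta_\lambda(T;\mathbf{v})-\theta_\lambda(0;\mathbf{v})}{T}=\int F_\lambda\,\mathrm{d}\delta_T,\qquad \delta_T:=\frac{1}{T}\int_0^T\delta_{\Psi_s(\mathbf{v},\theta_\lambda(0;\mathbf{v}))}\,\mathrm{d}s,
\]
and extract along a subsequence a weak$^*$-limit $\mu$ of $\delta_T$ on the compact space of probability measures on $\mathrm{E}(\mathbf{v})\times\R P^1$, necessarily $\Psi_t$-invariant. Its push-forward to $\mathrm{E}(\mathbf{v})$ is translation-invariant, hence equals $\mu_{\mathrm{E}(\mathbf{v})}$ by unique ergodicity of the hull. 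Continuity of $F_\lambda$ then shows every subsequential limit of the left-hand side is of the form $\int F_\lambda\,\mathrm{d}\mu$ for some $\Psi_t$-invariant $\mu$ projecting to $\mu_{\mathrm{E}(\mathbf{v})}$.

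The main obstacle is that $\Psi_t$ need not be uniquely ergodic, so I must prove $\int F_\lambda\,\mathrm{d}\mu$ has a value independent of $\mu$. I would argue via ergodic decomposition: every ergodic component of $\mu$ still projects to the shift-invariant measure $\mu_{\mathrm{E}(\mathbf{v})}$. Given two such ergodic components $\mu_1,\mu_2$, their respective Birkhoff-typical sets both project in $\mathrm{E}(\mathbf{v})$ to sets of full $\mu_{\mathrm{E}(\mathbf{v})}$-measure; by Fubini their intersection provides a common base point $\omega\in\mathrm{E}(\mathbf{v})$ together with initial angles $\theta^{(1)},\theta^{(2)}$ at which the Ces\`{a}ro averages of $F_\lambda$ along the $\Psi_t$-orbits converge to $\int F_\lambda\,\mathrm{d}\mu_1$ and $\int F_\lambda\,\mathrm{d}\mu_2$ respectively. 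The monotonicity bound from the previous paragraph forces these two limits to coincide, so $\int F_\lambda\,\mathrm{d}\mu_1=\int F_\lambda\,\mathrm{d}\mu_2$, completing the argument.
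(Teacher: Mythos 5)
Your proposal is correct and works within the same conceptual framework as the paper: reduce \x{sp-sl} to the Pr\"ufer angle ODE \x{ftheta}, lift to a skew-product flow over the hull $\mathrm{E}(\mathbf{v})$, project any invariant measure to the base to recover the Haar measure via unique ergodicity (the paper's Lemma~\ref{proj-me}), and use the non-crossing/monotonicity of the angle to show the time average is independent of the initial angle (the paper's Lemma~\ref{ind}). The genuine difference is in how you close the argument. You use compactness of empirical measures $\delta_T$ together with ergodic decomposition to show that \emph{every} invariant measure projecting to Haar yields the same value of $\int F_\lambda\,\mathrm{d}\mu$, and hence every subsequential limit of the Ces\`aro quotient coincides; this is self-contained and directly proves the existence at the fixed base point $\mathbf{v}$. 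The paper instead first defines $\rho(\lambda,\mathbf{v})$ via any single invariant measure, checks independence by an intersection-of-full-measure-sets argument, and then invokes the Johnson--Moser uniformly ergodic theorem (cited as Lemma~\ref{jm}, from \cite[Lemma 4.4]{JM1982}) applied to $g_\lambda=f_\lambda-\rho(\lambda,\mathbf{v})$ to obtain \emph{uniform} convergence over all $(\vartheta,\widetilde{\mathbf{v}})\in\mathrm{Z}$. What the paper's route buys is precisely that uniformity, which is then used in Remark~\ref{re-rn} to conclude $\rho(\lambda,\widetilde{\mathbf{v}})\equiv\rho(\lambda,\mathbf{v})$ for all $\widetilde{\mathbf{v}}\in\mathrm{E}(\mathbf{v})$; your route is somewhat more elementary (it does not rely on the Johnson--Moser uniform ergodic lemma as a black box) but gives pointwise rather than uniform convergence, which suffices for Theorem~\ref{thm-rn} as stated. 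A minor cosmetic difference: you work on $\R P^1$ (exploiting the $\pi$-periodicity of the vector field in $\theta$), while the paper works on $\mathbb{S}_{2\pi}$; both are fine since the vector field is $\pi$-periodic and \emph{a fortiori} $2\pi$-periodic, and your choice makes the ``linearly independent solutions differ by less than $\pi$'' bound particularly transparent.
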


\begin{rem}
{\rm  
The concept of the rotation number is due to Poincar\'e and is used to obtain a classification of orientation preserving self-homeomorphisms of the circle; see \cite{KH95}. Extensions of this concept have been considered by many authors in the literature; see, for example, \cite{BJ09, He83, Kw00, LL08}. 
}
\end{rem}

Similarly to $\mathcal{M}_f$, we denote the smallest additive group of $\R$ containing the Fourier exponents of $\textbf{v}$ by
	\begin{equation} \label{m-v}
		\mathcal{M}_{\textbf{v}}:= \left\{ \sum_{k=1}^m n_k \lambda_k \;\middle|\; \lambda_k \in \mathcal{M}_p \cup \mathcal{M}_q \cup \mathcal{M}_w, \, n_k \in \mathbb{Z}, \, m \in \mathbb{N} \right\}.
	\end{equation}
Note that $\mathcal{M}_{\frac{1}{p}}=\mathcal{M}_p$ for $p \in \mathcal{AP}_+(\mathbb{R},\mathbb{R})$; see Lemma \ref{lm-1/f} \romannumeral3). Then we have the gap labelling theorem as follows.
	
\begin{thm}\label{thm-gl}
Let\/ $\mathrm{J}$ be an open interval of\/ $\mathbb{R}\setminus \sigma(L_{\bf{v}})$. Then\/ $\rho(\lambda,\bf{v})$ is a constant in $\mathrm{J}$ and 
	\[
	2\rho(\lambda,{\bf{v}})\in \mathcal{M}_{\bf{v}} \qquad \mbox{for~}\lambda \in \mathrm{J}.
	\]
\end{thm}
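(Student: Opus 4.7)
My plan is to work on the hull $\Omega := \mathrm{E}(\mathbf{v})$. In the Bohr almost periodic setting, $\Omega$ is a compact connected abelian topological group whose Pontryagin dual (as a discrete abelian group) is precisely the frequency module $\mathcal{M}_{\mathbf{v}}$; each $\mu \in \mathcal{M}_{\mathbf{v}}$ corresponds to a continuous character $\chi_\mu : \Omega \to \mathbb{S}^1$ with $\chi_\mu(\mathbf{v}\cdot t) = \chi_\mu(\mathbf{v})\,e^{i\mu t}$, and the shift flow is uniquely ergodic with Haar measure $\mu_\Omega$. I denote by $L_\omega$ the Sturm-Liouville operator realized at $\omega \in \Omega$; by minimality $\sigma(L_\omega) = \sigma(L_{\mathbf{v}})$ for every $\omega$.

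For $\lambda \in \mathrm{J}$, the first step is to establish a uniform exponential dichotomy on $\Omega$: the Weyl limit-point solutions $\psi_\pm(\cdot,\lambda,\omega)$ decay exponentially at $\pm\infty$, depend continuously on $(\lambda,\omega) \in \mathrm{J}\times\Omega$, and may be chosen real-valued because $\lambda$ is real and lies in the resolvent set. This produces a continuous projective section
\[
\Theta_+^\lambda(\omega) \;:=\; \arg\bigl(p_\omega(0)\psi_+'(0,\lambda,\omega) + i\,\psi_+(0,\lambda,\omega)\bigr) \bmod \pi,
\]
from which I build the continuous $\mathbb{S}^1$-valued map $V_\lambda := e^{2i\Theta_+^\lambda} : \Omega \to \mathbb{S}^1$, depending continuously on $\lambda \in \mathrm{J}$ in the uniform topology.

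Both conclusions then follow from homotopy theory on $\Omega$. Since $\mathcal{M}_{\mathbf{v}} \subset \mathbb{R}$ is torsion-free, $\Omega$ is connected and standard structure theory of compact connected abelian groups gives
\[
[\Omega,\mathbb{S}^1] \;\cong\; \check H^1(\Omega,\mathbb{Z}) \;\cong\; \hat\Omega \;\cong\; \mathcal{M}_{\mathbf{v}},
\]
each homotopy class uniquely represented by a character $\chi_\mu$. Hence $V_\lambda$ is homotopic to $\chi_{\mu(\lambda)}$ for a unique $\mu(\lambda) \in \mathcal{M}_{\mathbf{v}}$; continuity of $\lambda \mapsto V_\lambda$ together with the discreteness of the set of homotopy classes makes $\lambda \mapsto \mu(\lambda)$ locally constant on the connected interval $\mathrm{J}$, hence constantly equal to some $\mu_0$. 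Writing $V_\lambda \cdot \overline{\chi_{\mu_0}} = e^{if}$ for a continuous $f : \Omega \to \mathbb{R}$ and comparing continuous $\mathbb{R}$-lifts along the orbit $t \mapsto \mathbf{v}\cdot t$ yields
\[
2\theta_\lambda(t;\mathbf{v}) - 2\theta_\lambda(0;\mathbf{v}) \;=\; \mu_0\, t + f(\mathbf{v}\cdot t) - f(\mathbf{v}),
\]
and dividing by $t$ and letting $t \to \infty$ forces $2\rho(\lambda,\mathbf{v}) = \mu_0 \in \mathcal{M}_{\mathbf{v}}$, delivering constancy of $\rho$ on $\mathrm{J}$ and the integrality claim simultaneously.

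The main obstacle is the very first step: uniform exponential dichotomy on the hull $\Omega$ together with the continuous dependence of the Weyl solutions on $(\lambda,\omega)$. This is precisely where the paper's promised almost periodicity of Green's functions must be invoked, since the Weyl $m$-functions $\psi_\pm'/\psi_\pm$ are encoded in the diagonal of the Green's function, and their continuous extension to $\omega \in \Omega$ underlies the continuity of $\Theta_+^\lambda$. The topological ingredients, namely the identification of $\Omega$ as the Pontryagin dual of $\mathcal{M}_{\mathbf{v}}$, the connectedness of $\Omega$, and the isomorphism $[\Omega,\mathbb{S}^1] \cong \mathcal{M}_{\mathbf{v}}$, are more routine consequences of Bohr-compactification theory for vector-valued almost periodic functions and of \v{C}ech cohomology on compact connected abelian groups.
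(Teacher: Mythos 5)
Your proposal is correct, but it follows a genuinely different route from the paper's. You argue via the Schwartzman/winding-number mechanism: identify $\Omega=\mathrm{E}(\mathbf{v})$ with the Bohr compactification dual to $\mathcal{M}_{\mathbf{v}}$, show the squared projective Pr\"ufer angle of the Weyl solution defines a continuous map $V_\lambda:\Omega\to\mathbb{S}^1$, and read off $2\rho(\lambda,\mathbf{v})$ as the frequency of the character in its homotopy class, using $[\Omega,\mathbb{S}^1]\cong\hat\Omega\cong\mathcal{M}_{\mathbf{v}}$. The paper, by contrast, deliberately avoids this (see the remark right after the theorem statement, which contrasts the Schwartzman-homomorphism approach of Damanik--Fillman et al.\ with the paper's own) and works directly with the diagonal Green's function: it shows $G(x,x,\lambda;\mathbf{v})=\phi_+\phi_-$ is almost periodic with $\mathcal{M}_G\subset\mathcal{M}_{\mathbf{v}}$ (Lemma~\ref{gg'ap}), that its zeros and those of all hull-translates are simple, and then invokes a Johnson--Moser corollary (packaged as Lemma~\ref{lm-fzero}) on the zero-density of such functions; since the zeros of $G(\cdot,\cdot,\lambda;\mathbf{v})$ are the union of the zeros of $\phi_\pm$, and each of $\phi_\pm$ has zero-density $\rho(\lambda,\mathbf{v})/\pi$ by Lemma~\ref{geo-rn}, one gets $2\rho(\lambda,\mathbf{v})\in\mathcal{M}_{\mathbf{v}}$. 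Both routes need the same hard analytic input, namely continuity of the Weyl data over the hull at real $\lambda$ in the resolvent set, which the paper supplies via Lemma~\ref{mgg'-con} and its consequence Lemma~\ref{gg'ap}; you correctly flag this as the main obstacle in your sketch. What your approach buys is a cleaner argument for constancy of $\rho$ on $J$: local constancy of the homotopy class $[V_\lambda]\in\mathcal{M}_{\mathbf{v}}$ in the uniform topology forces $\mu(\lambda)\equiv\mu_0$ on the connected interval $J$, whereas the paper's appeal to continuity of $\rho$ (Lemma~\ref{rn-cont}) alone does not suffice when $\mathcal{M}_{\mathbf{v}}$ is dense in $\mathbb{R}$ (as in the paper's own example $\mathbb{Z}+\sqrt{2}\,\mathbb{Z}$); constancy really rests on the rigidity carried by the exponential dichotomy or, equivalently, the homotopy class, so your version makes that step explicit. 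What the paper's approach buys is avoiding Bruschlinsky/\v{C}ech cohomology machinery in favor of a concrete zero-counting argument that sits naturally alongside the oscillation-theoretic Lemma~\ref{geo-rn} already developed.
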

	
\begin{rem}
{\rm  

\romannumeral1): Very recently, Damanik and his collaborators have conducted a series of studies on gap labelling for discrete one-dimensional ergodic Schr\"odinger/Jacobi operators using the Schwartzman homomorphism developed by Johnson; see, for example, \cite{DF2022, DF2023, DFZ2023, DEF2024}. In contrast, our work investigates gap labelling for almost periodic Sturm-Liouville operators directly in terms of Green's functions. In particular, we establish the almost periodicity of the Green's functions and exploit the module containment relationship between the coefficient functions and the Green's functions; see Lemma \ref{gg'ap}.

\romannumeral2): Let us revisit Figure \ref{figp} and Figure \ref{figap}. It is easy to verify that $\mathcal{M}_{\bf{v}}=\Z$ for Figure \ref{figp}. Thus it is reasonable that the platforms are only distributed at $\frac{\mathbb{Z}}{2}$. However, we have $\mathcal{M}_{\bf{v}}=\Z+\sqrt{2}\Z$ for Figure \ref{figap}. Due to the density of $\Z+\sqrt{2}\Z$, the phenomenon of the Devil's Staircase may appear in this case.
}
\end{rem}

Throughout this paper, we adopt the following notations:

\noindent $\bullet$ $\mathrm{i}$: the imaginary unit;

\noindent $\bullet$ $\mathrm{e}$: the Euler number;

%\noindent $\bullet$ $\Re z$: the real part of $z\in\mathbb{C}$;

%\noindent $\bullet$ $\Im z$: the imaginary part of $z\in\mathbb{C}$;

\noindent $\bullet$ $\mathbb{K}$: either $\mathbb{R}$ or $\mathbb{C}$;

\noindent $\bullet$ $I$: the identity operator;

\noindent $\bullet$ $\cdot^*$: the conjugate of $\cdot$.  If $\cdot=f$ is a complex function, then $f^*$ denotes its conjugate function. If $\cdot=z$ is a complex number, then $z^*$ denotes its conjugate complex number;

\noindent $\bullet$ $f^{\prime}$: denote the derivative $\disp\frac{\mathrm{d}f}{\mathrm{d}x}$;

\noindent $\bullet$ $\mathcal{L}^p(D,\mathrm{d}\mu)$: the space of all $\mathcal{L}^p$-integrable functions $f$ defined on the set $D$ with the norm $\|f\|_{\mathcal{L}^p(D,\mathrm{d}\mu)}:=\left( \int_D|f|^p\mathrm{d}\mu\right) ^{\frac{1}{p}}$, for $p \in \mathbb{N}$;

%\noindent $\bullet$ $\mathcal{L}_c^p(\mathbb{R},\mathrm{d}\mu)$: the  space of all compactly supported $p$ integrable functions defined on $\mathbb{R}$ with the norm $\|\cdot\|_{\mathcal{L}^p(\mathbb{R},\mathrm{d}\mu)}$;

\noindent $\bullet$ $\mathcal{C}(\mathbb{R}, \mathbb{K})$: the space of all continuous functions $f: \mathbb{R}\rightarrow\mathbb{K}$ with the norm $\|f\|_{\infty}:=\max_{x\in\mathbb{R}}|f(x)|$;

\noindent $\bullet$ $\mathcal{AC}(\mathbb{R},\mathrm{d}\mu)$: the space of all absolutely continuous functions with respect to the measure $\mu$, i.e., for any $f\in\mathcal{AC}(\mathbb{R},\mathrm{d}\mu)$, there exists a locally integrable function $g$ such that $f(x)=f(c)+\int_{c}^{x}g(t)\mathrm{d}\mu(t)$, $c\in\mathbb{R}$;

%\noindent $\bullet$ $\mathcal{AC}_c(\mathbb{R},\mathrm{d}\mu)$: the space of all compactly supported absolutely continuous functions with respect to the measure $\mu$;

\noindent $\bullet$ $\sigma(L)$: the spectrum of the operator $L$;

\noindent $\bullet$ $\sharp\{\cdot\}$:  the number of elements in a given set.

\section{Almost periodic functions}\label{sec2}
\setcounter{equation}{0}
In this section, we state some fundamental results on almost periodic functions, and then introduce the triplets of almost periodic functions and the corresponding joint hulls which will be used later. For more details on almost periodic functions, see \cite{Fink}.

\subsection{Module}

Let $f\in \mathcal{AP}(\mathbb{R},\mathbb{K})$, and $\mathcal{M}_f$ be the frequency module of $f$.  $\mathcal{AP}_{\mathcal{M}_f}(\mathbb{R},\mathbb{K})$ denotes the space of all functions in $\mathcal{AP}(\mathbb{R},\mathbb{K})$ whose frequency module belongs to $\mathcal{M}_{f}$. $\mathcal{C}(\mathrm{E}(f),\mathbb{K})$ denotes the space of all continuous functions from $\mathrm{E}(f)$ to $\mathbb{K}$. 

\begin{lem} \label{lm-equi} {\rm \cite{Fink}} Let $f\in \mathcal{AP}(\mathbb{R},\mathbb{K})$.
	\begin{itemize}
		\item[{\rm \romannumeral1):}] If $f^{\prime}(x)$ is uniformly continuous on $\R$, then $f^{\prime}(x) \in \mathcal{AP}(\mathbb{R},\mathbb{K})$ and $\mathcal{M}_{f^{\prime}} \subset \mathcal{M}_f$.
		\item[{\rm \romannumeral2):}] $\mathcal{AP}_{\mathcal{M}_f}(\mathbb{R},\mathbb{K})$ is a Banach algebra.
		\item[{\rm \romannumeral3):}] $\mathcal{AP}_{\mathcal{M}_f}(\mathbb{R},\mathbb{K})$ and\/ $\mathcal{C}(\mathrm{E}(f),\mathbb{K})$ are isomorphic as Banach algebras.
	\end{itemize}	
\end{lem}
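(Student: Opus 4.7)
The plan is to address the three parts separately, relying on the Bohr characterization of almost periodic functions (relative compactness of the shift family in the uniform topology) and on the density of trigonometric polynomials with exponents in $\mathcal{M}_f$ inside $\mathcal{AP}_{\mathcal{M}_f}(\mathbb{R},\mathbb{K})$.

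For part (i), I would approximate $f'$ by the difference quotient $g_h(x) := h^{-1}(f(x+h)-f(x))$. A direct estimate shows that any $\tfrac{|h|\epsilon}{2}$-almost period of $f$ is an $\epsilon$-almost period of $g_h$, so each $g_h$ is itself almost periodic. Uniform continuity of $f'$ then yields $g_h \to f'$ uniformly, since $g_h(x)-f'(x) = h^{-1}\int_0^h (f'(x+s)-f'(x))\,\mathrm{d}s$ is controlled by the modulus of continuity of $f'$ independently of $x$. Because uniform limits of almost periodic functions are almost periodic, $f'\in\mathcal{AP}(\mathbb{R},\mathbb{K})$. The inclusion $\mathcal{M}_{f'}\subset\mathcal{M}_f$ then follows from the identity $\widehat{f'}(\lambda) = \mathrm{i}\lambda\,\widehat{f}(\lambda)$, which gives $\exp(f')\subset\exp(f)\cup\{0\}$.

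For part (ii), product closure reduces, via Bohr approximation, to trigonometric polynomials, where the elementary identity $\mathrm{e}^{\mathrm{i}\lambda x}\,\mathrm{e}^{\mathrm{i}\mu x}=\mathrm{e}^{\mathrm{i}(\lambda+\mu)x}$ shows that $\mathcal{M}_{gh}\subset\mathcal{M}_g+\mathcal{M}_h\subset\mathcal{M}_f$ for $g,h\in\mathcal{AP}_{\mathcal{M}_f}(\mathbb{R},\mathbb{K})$. Closedness of $\mathcal{AP}_{\mathcal{M}_f}(\mathbb{R},\mathbb{K})$ under $\|\cdot\|_\infty$ follows from the continuity of each Fourier functional $\widehat{\cdot}(\lambda)$ on $(\mathcal{AP}(\mathbb{R},\mathbb{K}),\|\cdot\|_\infty)$: if $g_n\to g$ uniformly and $\widehat{g_n}(\lambda)=0$ for all $\lambda\notin\mathcal{M}_f$, then $\widehat{g}(\lambda)=0$ as well. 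Together with the known Banach algebra structure of $(\mathcal{AP}(\mathbb{R},\mathbb{K}),\|\cdot\|_\infty)$, this upgrades $\mathcal{AP}_{\mathcal{M}_f}(\mathbb{R},\mathbb{K})$ to a Banach subalgebra.

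Part (iii) is the main obstacle. I would construct the isomorphism by sending $g\in\mathcal{AP}_{\mathcal{M}_f}(\mathbb{R},\mathbb{K})$ to the function $G:\mathrm{E}(f)\to\mathbb{K}$ defined by $G(\widetilde{f}) := \lim_n g(t_n)$ whenever $f\cdot t_n \to \widetilde{f}$ uniformly. The hard part is showing this limit exists and is independent of the approximating sequence, which amounts to proving: if $\{f\cdot t_n\}$ is uniformly Cauchy then so is $\{g\cdot t_n\}$. This is verified first for trigonometric polynomials with exponents in $\mathcal{M}_f$ by a common-$\epsilon$-almost-period argument applied to finitely many exponentials, and then extended to all of $\mathcal{AP}_{\mathcal{M}_f}(\mathbb{R},\mathbb{K})$ by uniform density of such polynomials. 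Once $G$ is well-defined and continuous, linearity and multiplicativity of $g\mapsto G$ are immediate; the isometry $\|G\|_\infty = \|g\|_\infty$ follows from the minimality of $\mathrm{E}(f)$ (the orbit $\{f\cdot t:t\in\mathbb{R}\}$ is dense); and surjectivity is obtained by pulling back: for $G\in\mathcal{C}(\mathrm{E}(f),\mathbb{K})$, the function $g(t):=G(f\cdot t)$ is almost periodic, and its Fourier exponents lie in $\mathcal{M}_f$ by a Stone-Weierstrass argument applied to the characters of $\mathrm{E}(f)$ arising from $\mathcal{M}_f$.
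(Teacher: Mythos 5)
The paper does not prove this lemma---it cites it directly from \cite{Fink} (see Bohr's theory, Favard's work, and \cite[Theorem 4.5]{Fink} in particular). So there is no paper-internal proof to compare against; what you have written is a reconstruction of the standard textbook argument.

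Your sketch is essentially correct. Part (i) via difference quotients is the standard route, and the Fourier identity $\widehat{f'}(\lambda)=\mathrm{i}\lambda\widehat{f}(\lambda)$ (obtained by integrating by parts and using boundedness of $f$) gives the module inclusion; note in fact $\widehat{f'}(0)=0$, so $\exp(f')\subset\exp(f)$ directly, without needing to append $\{0\}$. Part (ii) is fine: the module of a product lies in the sum of the modules, which is closed since $\mathcal{M}_f$ is a group, and closedness under uniform limits follows from $|\widehat{g_n}(\lambda)-\widehat{g}(\lambda)|\le\|g_n-g\|_\infty$. In part (iii), the well-definedness step you flag as the hard part is exactly the module-containment criterion (\cite[Theorem 4.5]{Fink}) that the paper invokes repeatedly elsewhere; the cleanest route to the needed Cauchy property is through $\widehat{f\cdot t_n}(\lambda)=\mathrm{e}^{\mathrm{i}\lambda t_n}\widehat{f}(\lambda)$, which shows $\mathrm{e}^{\mathrm{i}\lambda t_n}$ converges for $\lambda\in\exp(f)$ and hence, by the group structure, for all $\lambda\in\mathcal{M}_f$; the ``common $\epsilon$-almost-period'' phrasing is workable but vaguer. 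For surjectivity, your Stone--Weierstrass argument is correct provided you note that the characters $\chi_\lambda(\widetilde f):=\widehat{\widetilde f}(\lambda)/\widehat{f}(\lambda)$, $\lambda\in\mathcal{M}_f$, separate points of $\mathrm{E}(f)$ (by uniqueness of Fourier coefficients together with $\exp(\widetilde f)\subset\exp(f)$). So the proposal is a valid high-level sketch; it would need the two observations above fleshed out to be a complete proof.
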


Moreover, we have

\begin{lem}\label{lm-1/f}
	Let $f \in \mathcal{AP}_+(\mathbb{R},\mathbb{R})$. 
	\begin{itemize}
		\item[{\rm \romannumeral1):}] There exists a constant $\delta:=\delta(f)>0$ such that $f(x)\geq \delta$  for all $x\in\mathbb{R}$.
		\item[{\rm \romannumeral2):}] $\frac{1}{f}$ is well-defined, and $\frac{1}{f}\in\mathcal{AP}_+(\mathbb{R},\mathbb{R})$.
		\item[{\rm \romannumeral3):}] $\mathcal{M}_{\frac{1}{f}}=\mathcal{M}_f$.
	\end{itemize}	
\end{lem}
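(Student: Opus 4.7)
The plan is to prove the three parts in order, as each relies on the previous. Part (i) is a compactness/contradiction argument using the hull, part (ii) upgrades this uniform lower bound to almost periodicity of $1/f$, and part (iii) invokes the Banach algebra structure from Lemma \ref{lm-equi}.

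For (i), I will argue by contradiction: assume $\inf_{x\in\R} f(x)=0$ and pick a sequence $\{x_n\}\subset\R$ with $f(x_n)\to 0$. Since $f\in\mathcal{AP}(\R,\R)$, I can extract a subsequence $\{x_{n_k}\}$ such that $f\cdot x_{n_k}$ converges uniformly to some $\tilde f\in \mathrm{E}(f)$. Evaluating at $0$ gives $\tilde f(0)=\lim_k f(x_{n_k})=0$, contradicting $\tilde f>0$ from the definition of $\mathcal{AP}_+(\R,\R)$. This yields a uniform $\delta=\delta(f)>0$; the same argument applied to any $\tilde f\in\mathrm{E}(f)$ (whose shifts are also limits of shifts of $f$) shows $\tilde f\geq \delta$ as well. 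For (ii), well-definedness of $1/f$ is immediate from (i). To verify Bohr almost periodicity, given any sequence $\{\tilde t_n\}$ pick a subsequence $\{t_n\}$ with $f\cdot t_n\to \tilde f$ uniformly on $\R$. The elementary estimate
\[
\left|\frac{1}{f(x+t_n)}-\frac{1}{\tilde f(x)}\right|=\frac{|\tilde f(x)-f(x+t_n)|}{f(x+t_n)\,\tilde f(x)}\leq \frac{\|f\cdot t_n-\tilde f\|_\infty}{\delta^2}
\]
shows $(1/f)\cdot t_n\to 1/\tilde f$ uniformly, hence $1/f\in\mathcal{AP}(\R,\R)$. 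Moreover, every element of $\mathrm{E}(1/f)$ is of the form $1/\tilde f$ with $\tilde f\in\mathrm{E}(f)$ positive and bounded below by $\delta$, so $1/f\in\mathcal{AP}_+(\R,\R)$.

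For (iii), I will use the Banach algebra isomorphism of Lemma \ref{lm-equi} (iii). Under this isomorphism, $f\in\mathcal{AP}_{\mathcal{M}_f}(\R,\R)$ corresponds to a continuous function $F\in\mathcal{C}(\mathrm{E}(f),\R)$ with $F(\tilde f)=\tilde f(0)\geq\delta>0$ for all $\tilde f\in\mathrm{E}(f)$ by part (i). Therefore $1/F\in\mathcal{C}(\mathrm{E}(f),\R)$, and transporting back through the isomorphism produces an element of $\mathcal{AP}_{\mathcal{M}_f}(\R,\R)$ whose value at $t$ is $1/F(f\cdot t)=1/f(t)$. This shows $1/f\in\mathcal{AP}_{\mathcal{M}_f}(\R,\R)$, i.e. $\mathcal{M}_{1/f}\subseteq\mathcal{M}_f$. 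The reverse inclusion follows by symmetry, applying the same argument to $1/f\in\mathcal{AP}_+(\R,\R)$ (as established in part (ii)) and using $f=1/(1/f)$. As a cleaner alternative to the isomorphism step, I could also establish $1/f\in\mathcal{AP}_{\mathcal{M}_f}(\R,\R)$ directly via a Neumann series: writing $f=c\bigl(1-(1-f/c)\bigr)$ for any $c>\|f\|_\infty$, one has $\|1-f/c\|_\infty\leq (c-\delta)/c<1$, so $1/f=c^{-1}\sum_{n\geq 0}(1-f/c)^n$ converges in the Banach algebra $\mathcal{AP}_{\mathcal{M}_f}(\R,\R)$.

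I do not anticipate a genuine obstacle here; the lemma is technical infrastructure. The only subtlety worth flagging is the propagation of the uniform lower bound from $f$ to all of $\mathrm{E}(f)$, which is essential both for the uniform convergence estimate in (ii) and for the continuity of $1/F$ on the compact hull in (iii). This is precisely why the hypothesis is formulated via $\mathcal{AP}_+(\R,\R)$ rather than just pointwise positivity of $f$.
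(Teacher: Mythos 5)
Your proof is correct. Parts (i) and (ii) are essentially the paper's approach, just packaged differently: the paper proves (i) by directly extending $f$ to a continuous positive function $F$ on the compact hull $\mathrm{E}(f)$ and taking its minimum, whereas you run an equivalent sequential contradiction; for (ii) you write out the quantitative estimate that the paper waves at as ``obvious,'' which is harmless and arguably cleaner. The genuine divergence is in (iii). The paper proves $\mathcal{M}_{1/f}\subseteq\mathcal{M}_f$ directly via Fink's sequential characterization of module containment (\cite[Theorem 4.5]{Fink}): for any sequence $\{t_n\}$ with $f\cdot t_n\to f_0$ uniformly, the estimate $\bigl|\tfrac{1}{f(x+t_n)}-\tfrac{1}{f_0(x)}\bigr|\le\delta^{-2}|f(x+t_n)-f_0(x)|$ shows $(1/f)\cdot t_n\to 1/f_0$ uniformly, and then the reverse inclusion follows by symmetry. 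You instead invoke the Banach algebra isomorphism $\mathcal{AP}_{\mathcal{M}_f}(\R,\R)\cong\mathcal{C}(\mathrm{E}(f),\R)$ from Lemma~\ref{lm-equi}~(iii), transport $f\mapsto F$, invert $F$ there, and transport back (with the Neumann series as a self-contained variant that sidesteps the isomorphism entirely). Both are valid; the paper's route is more hands-on and reuses the estimate already needed for (ii), whereas yours leans on the structural isomorphism, which is heavier machinery but makes the module containment conceptually immediate. One small point worth noting in the Neumann series alternative: you are implicitly using that the constant function $1$ lies in $\mathcal{AP}_{\mathcal{M}_f}(\R,\R)$ (i.e.\ $0\in\mathcal{M}_f$), which is true since $\mathcal{M}_f$ is a subgroup of $\R$, but it is worth saying so that the algebra is unital.
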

\begin{proof}
\fbox{{\rm {\romannumeral1})}}\ : We extend $f(x)$ to $F(\xi) \in \mathcal{C}(\mathrm{E}(f),\R)$ such that $F(\xi_0 \cdot x)=f(x)$ where $\xi_0:=f$ and $x\in \R$. By \x{ap+} and Lemma \ref{lm-equi}, we have $F(\xi)>0$ for all $\xi\in \mathrm{E}(f)$. Thus there exists a constant $\delta>0$ such that $F(\xi)\geq \delta$ for all $\xi\in \mathrm{E}(f)$, because $\mathrm{E}(f)$ is compact. In particular, $f(x)\geq \delta>0$  for all $x\in\mathbb{R}$. 

\fbox{{\rm {\romannumeral2})}}\ : It is obvious by {\rm {\romannumeral1})} and the boundedness of Bohr almost periodic functions. 

\fbox{{\rm {\romannumeral3})}}\ : For any sequence $\{t_n\} \subset \R$ such that  $\lim\limits_{n\rightarrow\infty} f\cdot t_n=f_0$ exists uniformly on the real axis, we have
	\begin{equation*}
	f_0(x)\geq \delta>0,\qquad \mbox{for all~} x\in\mathbb{R},
	\end{equation*}
where \romannumeral1) is used. It follows that 
	\begin{equation*}
	\left|\frac{1}{f(x+t_n)}-\frac{1}{f_0(x)}\right|=\left|\frac{f(x+t_n)-f_0(x)}{f(x+t_n)f_0(x)}\right|\leq \frac{1}{\delta^2}\left|f(x+t_n)-f_0(x)\right|.
	\end{equation*} 
Hence $\lim\limits_{n\rightarrow\infty} \frac{1}{f}\cdot t_n=\frac{1}{f_0}$ exists uniformly on the real axis. By \cite[Theorem 4.5]{Fink}, we obtain $\mathcal{M}_{\frac{1}{f}}\subseteq \mathcal{M}_f$. Conversely, by a similar argument, we have $\mathcal{M}_f\subseteq \mathcal{M}_{\frac{1}{f}}$. The desired result is obtained.
\end{proof}

\begin{rem} \label{re-1/f}
	{\rm \romannumeral1): For any $\widetilde{f} \in \mathrm{E}(f)$, Lemma \ref{lm-1/f} holds as well, and $\delta$ is independent of the choice of $\widetilde{f}$.\\[1mm]
	\romannumeral2): In general, we do not have $\exp\big(\frac{1}{f}\big)=\exp(f)$. For example, let $f:=\cos(x)+2$. Then $\exp(f)=\{0,\pm1\}$, but $\exp\big(\frac{1}{f}\big)=\mathbb{Z}$.\\[1mm]
	\romannumeral3): If we consider the following space:
		\begin{equation*} 
		\mathcal{AP}_+(\mathbb{R},\mathbb{C}):=\Big\{f \in \mathcal{AP}(\mathbb{R},\mathbb{C}): \big|\widetilde{f}(x)\big| \; \text{is positive for each } \widetilde{f} \in  \mathrm{E}(f) \Big\},
		\end{equation*}
	then Lemma \ref{lm-1/f} still holds, and one only needs to replace $f(x)$ by $|f(x)|$ in \romannumeral1).}
\end{rem}

We state the following lemma which plays a fundamental role in the proof of the gap labelling theorem.
\begin{lem}\label{lm-fzero}
Assume that $f(x)\in \mathcal{AP}(\mathbb{R},\mathbb{R})$, $f^{\prime}(x)$ is uniformly continuous on $\R$, and any function $\widetilde{f}(x) \in \mathrm{E}(f)$ has only simple zeroes. Then the following limit exists and 
	\[
	\lim\limits_{x\rightarrow +\infty}\frac{\pi\sharp\{s\in [0,x):f(s)=0\}}{x}\in \mathcal{M}_f.
	\]
\end{lem}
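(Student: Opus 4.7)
The plan is to interpret $\pi\cdot\#\{s\in[0,x):f(s)=0\}$ as the asymptotic argument change of the planar curve $x\mapsto(f'(x),f(x))$, and then to place the resulting rotation number in $\mathcal{M}_f$ by a Schwartzman/Johnson-Moser-type argument on the hull $\mathrm{E}(f)$.

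First I would promote $f$ and $f'$ to the hull. The uniform continuity of $f'$ together with Lemma~\ref{lm-equi}(i) yields $f'\in\mathcal{AP}(\mathbb{R},\mathbb{R})$ with $\mathcal{M}_{f'}\subset\mathcal{M}_f$, and by Lemma~\ref{lm-equi}(iii) both $f$ and $f'$ extend to continuous functions $F,F^{\dagger}:\mathrm{E}(f)\to\mathbb{R}$. The hypothesis that every $\widetilde{f}\in\mathrm{E}(f)$ has only simple zeroes is exactly the statement that $(F(\widetilde{f}),F^{\dagger}(\widetilde{f}))\neq(0,0)$ throughout $\mathrm{E}(f)$, so
\[
\Psi(\widetilde{f}):=\frac{F^{\dagger}(\widetilde{f})+iF(\widetilde{f})}{|F^{\dagger}(\widetilde{f})+iF(\widetilde{f})|}
\]
is a well-defined continuous map $\Psi:\mathrm{E}(f)\to S^{1}$.

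Next I would work with a continuous branch $\theta(x):=\arg(f'(x)+if(x))$. A local expansion at a simple zero $x_{0}$, using $f(x)=f'(x_{0})(x-x_{0})+O((x-x_{0})^{2})$, shows that $\theta(x_{0})\in\{0,\pi\}$ and that $\theta$ increases monotonically across $x_{0}$, contributing exactly $+\pi$ per zero to the cumulative argument. Using unique ergodicity of the shift flow on $\mathrm{E}(f)$ to control the between-zero oscillations coming from the formal derivative $\theta'(x)=((f'(x))^{2}-f(x)f''(x))/((f'(x))^{2}+f(x)^{2})$, this should yield the quantitative identification
\[
\lim_{x\to\infty}\frac{\theta(x)-\theta(0)}{x}=\pi\cdot\lim_{x\to\infty}\frac{\#\{s\in[0,x):f(s)=0\}}{x}.
\]
The left-hand side is precisely the Schwartzman (Johnson-Moser) rotation number of $\Psi$ relative to the shift flow on $\mathrm{E}(f)$; combining Schwartzman's theorem with the fact that, for the Bohr compactification $\mathrm{E}(f)$, the character group is $\mathcal{M}_{f}$ (Pontryagin duality), this rotation number lies in $\mathcal{M}_{f}$, completing the proof.

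The main obstacle I anticipate is establishing the displayed quantitative identification rigorously. Although $\theta$ is monotone through every simple zero, it need not be monotone between zeros (the sign of the formal numerator $(f')^{2}-ff''$ is indefinite, and moreover $f''$ may only exist in a distributional sense under the stated hypothesis), so one cannot naively tally zeroes as half-revolutions. The cleanest route is likely a regularization/homotopy argument: deform $\Psi$ through a one-parameter family of continuous $S^{1}$-valued maps on $\mathrm{E}(f)$ to a model map whose rotation number manifestly equals the zero density (for instance by strongly anisotropically rescaling the two components of $(F,F^{\dagger})$ so that between-zero oscillations are damped), and invoke homotopy invariance of the Schwartzman cycle together with unique ergodicity to pass to the limit.
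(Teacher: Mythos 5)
The paper's proof is a one-line citation: it invokes Lemma~\ref{lm-equi} to get $f'\in\mathcal{AP}(\R,\R)$ with $\mathcal{M}_{f'}\subset\mathcal{M}_f$, and then points to the Corollary on p.~412 of Johnson--Moser \cite{JM1982}, which is essentially this statement verbatim. Your proposal is an attempt to reconstruct that Johnson--Moser argument from scratch, and in outline it is the right one: pass $f,f'$ to the hull, form the unit-circle-valued map $\Psi$ on $\mathrm{E}(f)$ (well-defined precisely because of the simple-zero hypothesis), and identify the zero density with the Schwartzman winding number of $\Psi$, which lies in $H^1(\mathrm{E}(f),\Z)\cong\widehat{\mathrm{E}(f)}\cong\mathcal{M}_f$.

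However, the ``main obstacle'' you flag is not an obstacle at all, and your proposed regularization/homotopy workaround is unnecessary. The exact identity $\theta(x)-\theta(0)=\pi\sharp\{s\in[0,x):f(s)=0\}+O(1)$, with an error bounded by $2\pi$ and depending only on the endpoints, is immediate from a half-plane argument that uses only continuity of $(f',f)$ and the simple-zero condition, with no need for $f''$ and no ergodicity. Indeed, between two consecutive zeros $x_i<x_{i+1}$, $f$ keeps a constant sign, so the continuous curve $x\mapsto(f'(x),f(x))$ stays in one fixed open half-plane; any continuous branch of $\theta$ on $(x_i,x_{i+1})$ is therefore confined to a single interval $(m\pi,(m+1)\pi)$. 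Since at each simple zero the curve crosses the $f'$-axis transversally in the direction of increasing argument (as you correctly observed), $\theta$ enters this interval at $m\pi$ and must leave it at $(m+1)\pi$, contributing exactly $\pi$. Summing over the zeros gives the identity; no ``between-zero oscillation'' can change the net increment, however wildly $\theta$ wiggles inside the strip. With this settled, the only thing unique ergodicity of the shift on $\mathrm{E}(f)$ is needed for is the \emph{existence} of $\lim_{x\to+\infty}(\theta(x)-\theta(0))/x$ (equivalently, of the zero density), which follows from the Schwartzman/Johnson--Moser machinery you already invoke; the proposed deformation of $\Psi$ to a ``model map'' is superfluous. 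Finally, you should be aware that this entire argument is not in the paper --- the authors simply cite \cite{JM1982} --- so you are reproving the cited corollary rather than matching or diverging from a proof the paper itself gives.
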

\begin{proof}
By Lemma \ref{lm-equi} and Corollary \cite[p.412]{JM1982}, we have the desired result.
\end{proof}

\begin{rem} 
	{\rm The value of the limit can be regarded as the so-called \textsl{mean index}, which is a key quantity associated with non-periodic orbits in Hamiltonian systems arising from astromechanics; see \cite{HW20, Xia}.}
\end{rem}

Recall the notation \x{m-v}. Denote by $\mathcal{AP}_{\mathcal{M}_{\textbf{v}}}(\mathbb{R},\mathbb{K})$ the space of all functions in $\mathcal{AP}(\mathbb{R},\mathbb{K})$ whose frequency module belongs to $\mathcal{M}_{\textbf{v}}$. The relationship between the frequency modules of two elements in $\mathcal{AP}(\mathbb{R},\mathbb{K})$ has been established in \cite[Theorem 4.5]{Fink}. If we consider the vector-valued case instead of the scalar-valued case, the proof remains unchanged, except for the fact that the Fourier coefficients are vectors for the vector-valued case. Thus the relationship between the frequency modules of elements in $\mathcal{AP}(\mathbb{R},\mathbb{K}^3)$ and those in $\mathcal{AP}(\mathbb{R},\mathbb{K})$ can be stated as follows.

\begin{lem} \label{mv-eq}
	Let $f: \mathbb{R}\rightarrow\mathbb{K}$ be a continuous function. Then the following statements are equivalent:
	\begin{itemize}
		\item[{\rm \romannumeral1):}] $f \in \mathcal{AP}_{\mathcal{M}_{\bf{v}}}(\mathbb{R},\mathbb{K})$;
		\item[{\rm \romannumeral2):}] For any sequence $\{t_n\}_{n \in \N} \subset \R$ such that  $\lim\limits_{n\rightarrow\infty} \frac{1}{p}\cdot t_n$,  $\lim\limits_{n\rightarrow\infty} q \cdot t_n$ and $\lim\limits_{n\rightarrow\infty} w\cdot t_n$ exist uniformly on the real axis, it follows that $\lim\limits_{n\rightarrow\infty} f \cdot t_n$ also exists uniformly.
	\end{itemize}
\end{lem}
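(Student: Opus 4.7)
The plan is to recast the statement in the language of vector-valued Bohr almost periodic functions and then invoke the vector-valued analogue of \cite[Theorem 4.5]{Fink}, exactly as the paragraph preceding the lemma suggests. First I would identify $\bf{v}=(1/p,q,w)$ with an element of $\mathcal{AP}(\mathbb{R},\mathbb{R}^3)$; this is legitimate by Lemma \ref{lm-1/f} \romannumeral2). Since the Fourier coefficients of a vector-valued a.p.\ function are defined coordinatewise, a real $\lambda$ is a Fourier exponent of $\bf{v}$ if and only if it is a Fourier exponent of at least one of $1/p$, $q$, $w$. Combined with $\mathcal{M}_{1/p}=\mathcal{M}_p$ from Lemma \ref{lm-1/f} \romannumeral3), this identifies the frequency module of $\bf{v}$, viewed in the vector-valued sense, with $\mathcal{M}_{\bf{v}}$ as defined in \x{m-v}.

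For \romannumeral1)$\Rightarrow$\romannumeral2), I assume $\mathcal{M}_f\subseteq\mathcal{M}_{\bf{v}}$. Given $\{t_n\}$ along which $\frac{1}{p}\cdot t_n$, $q\cdot t_n$, $w\cdot t_n$ all converge uniformly, the composite $\bf{v}\cdot t_n$ converges uniformly in $\mathbb{R}^3$; the vector-valued \cite[Theorem 4.5]{Fink} then yields that $f\cdot t_n$ also converges uniformly on $\R$. For \romannumeral2)$\Rightarrow$\romannumeral1), I first establish that $f$ is itself almost periodic. Given an arbitrary $\{\widetilde{t}_n\}\subset\mathbb{R}$, three successive extractions, one for each of the components $\tfrac{1}{p},q,w\in\mathcal{AP}(\mathbb{R},\mathbb{R})$, produce a subsequence $\{t_n\}$ along which $\frac{1}{p}\cdot t_n$, $q\cdot t_n$, $w\cdot t_n$ all converge uniformly; hypothesis \romannumeral2) then forces $f\cdot t_n$ to converge uniformly, so $f\in\mathcal{AP}(\mathbb{R},\mathbb{K})$. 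With $f$ now known to be almost periodic, the converse direction of the vector-valued \cite[Theorem 4.5]{Fink} gives $\mathcal{M}_f\subseteq\mathcal{M}_{\bf{v}}$, i.e.\ $f\in\mathcal{AP}_{\mathcal{M}_{\bf{v}}}(\mathbb{R},\mathbb{K})$.

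The main obstacle is justifying the vector-valued upgrade of \cite[Theorem 4.5]{Fink}. The scalar proof proceeds via approximation by Bochner--Fej\'er polynomials together with uniqueness of the mean-value Fourier series, and each step carries over verbatim once Fourier coefficients are permitted to take values in $\mathbb{K}^3$ rather than in $\mathbb{K}$; this is precisely the observation the authors record in the paragraph preceding the lemma. If one wished to bypass the vector-valued invocation, an alternative would be to apply the scalar theorem three times using the joint-hull machinery developed in the following subsection, but this leads to a strictly longer argument with no real gain.
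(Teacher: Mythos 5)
Your proposal is correct and takes essentially the same route as the paper, which (offering no formal proof) simply remarks in the preceding paragraph that the lemma is the vector-valued upgrade of \cite[Theorem 4.5]{Fink} with an unchanged proof; you spell out the identification of $\mathcal{M}_{\bf{v}}$ as the frequency module of $(1/p,q,w)\in\mathcal{AP}(\R,\R^3)$ and, usefully, you also record the step the paper leaves implicit — that hypothesis \romannumeral2), via a triple diagonal extraction and Bochner's criterion, first forces $f\in\mathcal{AP}(\R,\K)$ before the converse direction of the module-containment theorem can be invoked.
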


\subsection{Joint hull}

Recall the notation $\textbf{v} \in \mathcal{AP}_*(\R,\R^3)$ in \x{v}. $\mathcal{AP}_*(\R,\R^3)$ can be equipped with the metric 
	\begin{equation} \label{dist-v}
	\|\textbf{v}\|_{\infty}:=\max\left\lbrace \Big\|\frac{1}{p}\Big\|_{\infty}, \|q\|_{\infty}, \|w\|_{\infty}\right\rbrace.
	\end{equation}
Note that $\mathcal{AP}_*(\R,\R^3)$ is not complete, although $\mathcal{AP}(\R,\R^3)$ is a Banach space. We consider an $\R$ action on $\mathcal{AP}_*(\R,\R^3)$ by shifts, and denote for $\textbf{v} \in \mathcal{AP}_*(\R,\R^3)$ and $t \in \R$ the corresponding shifted element in $\mathcal{AP}_*(\R,\R^3)$ by 		
	\begin{equation}\label{sft-v}
	{\bf{v}}\cdot t:=\Big(\frac{1}{p\cdot t},q\cdot t, w\cdot t\Big).
	\end{equation} 
It is easy to verify that the $\R$ action satisfies the following conditions.
\begin{itemize}
	\item group structure:
		\begin{equation*}
		{\bf{v}} \cdot 0 = {\bf{v}}, \mbox{~and~} {\bf{v}} \cdot (t_1+t_2)= ({\bf{v}} \cdot t_1) \cdot t_2, \qquad \mbox{for all~} {\bf{v}} \in \mathcal{AP}_*(\R,\R^3), \ t_1, \ t_2 \in \R,
		\end{equation*}
	\item isometry:
		\begin{equation} \label{sft-is}
		\|{\bf{v}}_1 \cdot t-{\bf{v}}_2 \cdot t\|_{\infty}=\|{\bf{v}}_1-{\bf{v}}_2\|_{\infty}, \qquad \mbox{for all~} t \in \R, \; {\bf{v}}_i \in \mathcal{AP}_*(\R,\R^3),\ i=1,2,
		\end{equation}
	\item uniform continuity: for any $\epsilon > 0$, there exists a $\delta = \delta(\epsilon) > 0$ such that
		\begin{equation*} 
		\| {\bf{v}} \cdot t - {\bf{v}} \|_{\infty} < \epsilon, \qquad \mbox{for all~} {\bf{v}} \in \mathcal{AP}_*(\R,\R^3), \; t\in \R \text{ with } |t| < \delta.
		\end{equation*}
\end{itemize}
By \x{sft-is}, we have
\begin{lem} \label{orbs}
	The shifts $\{{\bf{v}} \cdot t \}_{t \in \R}$ are equicontinuous homeomorphisms.
\end{lem}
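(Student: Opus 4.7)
The plan is to unpack the definition and check the two assertions (well-definedness as a map on $\mathcal{AP}_*(\R,\R^3)$ and the topological properties) from the three bulleted properties of the $\R$-action listed just before the lemma.

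First I would verify that for each fixed $t\in\R$ the shift $\sigma_t:{\bf v}\mapsto{\bf v}\cdot t$ actually sends $\mathcal{AP}_*(\R,\R^3)$ into itself. This reduces to checking that $\frac{1}{p\cdot t}=\frac{1}{p}\cdot t$ and $w\cdot t$ lie in $\mathcal{AP}_+(\R,\R)$, which is immediate from the fact that the hull $\mathrm{E}(f)$ coincides with $\mathrm{E}(f\cdot t)$ (the closure of the orbit is invariant under the shift), together with Lemma \ref{lm-1/f} which gives $\frac{1}{p}\in\mathcal{AP}_+(\R,\R)$.

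Next I would use the isometry property \eqref{sft-is} to obtain both continuity and equicontinuity in a single stroke: given $\epsilon>0$, choose $\delta=\epsilon$; then for every $t\in\R$ and every pair ${\bf v}_1,{\bf v}_2\in\mathcal{AP}_*(\R,\R^3)$ with $\|{\bf v}_1-{\bf v}_2\|_\infty<\delta$, \eqref{sft-is} forces $\|\sigma_t({\bf v}_1)-\sigma_t({\bf v}_2)\|_\infty=\|{\bf v}_1-{\bf v}_2\|_\infty<\epsilon$. This simultaneously shows that each $\sigma_t$ is continuous and that the family $\{\sigma_t\}_{t\in\R}$ is equicontinuous (uniformly in $t$).

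Finally, to promote each $\sigma_t$ to a homeomorphism, I would invoke the group-structure bullet: ${\bf v}\cdot 0={\bf v}$ and ${\bf v}\cdot(t_1+t_2)=({\bf v}\cdot t_1)\cdot t_2$ imply that $\sigma_t\circ\sigma_{-t}=\sigma_{-t}\circ\sigma_t=\sigma_0=\mathrm{id}$, so $\sigma_t$ is a bijection with inverse $\sigma_{-t}$. The continuity of $\sigma_{-t}$ follows by the same isometry argument, so $\sigma_t$ is a homeomorphism.

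There is no serious obstacle here; every ingredient has already been recorded. The only subtle point, and the one I would write most carefully, is making sure that $\sigma_t$ stays inside the non-complete subspace $\mathcal{AP}_*(\R,\R^3)$ (rather than the ambient $\mathcal{AP}(\R,\R^3)$), which is exactly where the shift-invariance of the hulls $\mathrm{E}(p)$ and $\mathrm{E}(w)$, combined with the definition \eqref{ap+} of $\mathcal{AP}_+(\R,\R)$, is used.
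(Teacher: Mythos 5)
Your proposal is correct and follows essentially the same route as the paper: the paper derives the lemma directly from the isometry property \eqref{sft-is}, which is exactly the key step in your argument. You simply spell out the routine details (well-definedness on $\mathcal{AP}_*(\R,\R^3)$, and the use of the group structure to obtain $\sigma_{-t}$ as a continuous inverse) that the paper leaves implicit.
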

For $\textbf{v}=\Big(\frac{1}{p},q,w\Big) \in \mathcal{AP}_*(\R,\R^3)$, denote the \textsl{orbit} of $\textbf{v}$ by 
	\begin{equation*} 
	\mathrm{Orb}({\bf{v}}) := \{\textbf{v} \cdot t : t \in \R\}.
	\end{equation*}
We introduce
\begin{defn}\label{hull}
The joint hull of\/ ${\bf{v}} \in \mathcal{AP}_*(\R,\R^3)$ is defined by
	\begin{equation*}
	\mathrm{E}({\bf{v}}):=\overline{ \mathrm{Orb}({\bf{v}})}^{\|\cdot\|_{\infty}}.
	\end{equation*}
\end{defn}
Obviously, we have
	\begin{equation*}
	\mathrm{E}({\bf{v}}) \subset \mathrm{E}\left(\frac{1}{p}\right) \times \mathrm{E}(q) \times \mathrm{E}(w) \subset \mathcal{AP}_*(\R,\R^3) \subset \mathcal{AP}(\R,\R^3).
	\end{equation*}
By the definition of Bohr almost periodic functions and Lemma \ref{lm-1/f} \rmnum{2}), we obtain

\begin{lem}\label{E-cpt}
	$\mathrm{E}(\bf{v})$ is compact in $\mathcal{AP}(\R,\R^3)$ with the metric $\|\cdot\|_{\infty}$.
\end{lem}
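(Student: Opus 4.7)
\medskip

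The plan is to prove compactness inside the complete Banach space $\mathcal{AP}(\mathbb{R},\mathbb{R}^3)$ by establishing sequential compactness. Since $\mathrm{E}(\mathbf{v})$ is closed by construction (it is a closure), the task reduces to showing that every sequence in $\mathrm{E}(\mathbf{v})$ admits a subsequence converging in $\|\cdot\|_\infty$ to some element of $\mathrm{E}(\mathbf{v})$. The overall idea is a standard two-step approximation: first replace points of $\mathrm{E}(\mathbf{v})$ by genuine orbit points using the definition of closure, then apply the sequential-compactness formulation of Bohr almost periodicity coordinate-wise to the orbit points.

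Concretely, given $\{\mathbf{v}_n\}\subset \mathrm{E}(\mathbf{v})$, I would first select $t_n\in\mathbb{R}$ with $\|\mathbf{v}_n-\mathbf{v}\cdot t_n\|_\infty < 1/n$. Next, invoking Lemma \ref{lm-1/f} \rmnum{2}) to view $1/p$ as an element of $\mathcal{AP}(\mathbb{R},\mathbb{R})$, I would apply the sequential compactness of Bohr almost periodic functions three times in succession to the sequences $\{(1/p)\cdot t_n\}$, $\{q\cdot t_n\}$, $\{w\cdot t_n\}$, extracting nested subsequences and relabeling, to obtain indices along which each of the three components converges uniformly on $\mathbb{R}$. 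Denote the componentwise limit by $\mathbf{v}^{*}$. Uniform convergence in each coordinate is precisely $\|\mathbf{v}\cdot t_n-\mathbf{v}^{*}\|_\infty\to 0$, and then the triangle inequality combined with the approximation $\|\mathbf{v}_n-\mathbf{v}\cdot t_n\|_\infty<1/n$ yields $\mathbf{v}_n\to\mathbf{v}^{*}$. Finally, $\mathbf{v}^{*}\in\mathrm{E}(\mathbf{v})$ is automatic since $\mathbf{v}^{*}$ is a uniform limit of shifts $\mathbf{v}\cdot t_n\in\mathrm{Orb}(\mathbf{v})$.

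No step is really a serious obstacle; the only thing requiring care is the bookkeeping of the nested subsequence extractions and the observation that $1/p$ is genuinely Bohr almost periodic (so that the extraction can be applied to it) — both of which are covered by Lemma \ref{lm-1/f}. It is also worth noting, although not required for the statement of Lemma \ref{E-cpt}, that $\mathbf{v}^{*}$ automatically lies in $\mathcal{AP}_*(\mathbb{R},\mathbb{R}^3)$: the positivity of the first and third coordinates passes to uniform limits by Remark \ref{re-1/f} \rmnum{1}), so one recovers $\mathrm{E}(\mathbf{v})\subset \mathcal{AP}_*(\mathbb{R},\mathbb{R}^3)$ as stated in the discussion preceding the lemma.
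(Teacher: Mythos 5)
Your proof is correct and follows essentially the same approach as the paper, whose proof is a one-line citation of the definition of Bohr almost periodicity together with Lemma \ref{lm-1/f} \rmnum{2}); your argument simply unpacks what that citation amounts to. Namely, approximate points of $\mathrm{E}(\mathbf{v})$ by orbit points $\mathbf{v}\cdot t_n$, apply the sequential-compactness characterization of Bohr almost periodicity to each of $1/p$, $q$, $w$ with nested subsequence extraction (using Lemma \ref{lm-1/f} \rmnum{2}) to ensure $1/p$ is indeed in $\mathcal{AP}(\mathbb{R},\mathbb{R})$), and conclude via the triangle inequality that the limit lies in $\mathrm{E}(\mathbf{v})$.
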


One may equip $\mathrm{E}(\textbf{v})$ with a group structure as follows. Let
	\[
	\widetilde{\textbf{v}}_1=\lim\limits_{n\rightarrow\infty}\textbf{v}\cdot t_n,\quad \widetilde{\textbf{v}}_2=\lim\limits_{n\rightarrow\infty}\textbf{v}\cdot s_n
	\]
be elements of $\mathrm{E}(\textbf{v})$, then the product is defined by
	\begin{equation} \label{prdt}
	\widetilde{\textbf{v}}_1\cdot \widetilde{\textbf{v}}_2 := \lim\limits_{n\rightarrow\infty}\textbf{v}\cdot (t_n+s_n).
	\end{equation}
The limit is well-defined and the product is obviously commutative. The inverse is defined by
	\begin{equation} \label{invs}
	\widetilde{\textbf{v}}_1^{-1}:=\lim\limits_{n\rightarrow\infty}\textbf{v}\cdot (-t_n),
	\end{equation}
and $\textbf{v}$ is thus the identity element of the group $\mathrm{E}(\textbf{v})$.

\begin{lem} \label{atg} We have
	\begin{itemize}
		\item[{\rm \romannumeral1):}] $\mathrm{E}(\widetilde{\bf{v}})= \mathrm{E}(\bf{v})$, for each $\widetilde{\bf{v}}\in \mathrm{E}(\bf{v})$;
		\item[{\rm \romannumeral2):}] $\mathcal{M}_{\widetilde{\bf{v}}}=\mathcal{M}_{\bf{v}}$, for each $\widetilde{\bf{v}}\in \mathrm{E}(\bf{v})$;
		\item[{\rm \romannumeral3):}] $(\mathrm{E}({\bf{v}}),\cdot \, ,^{-1})$ is a compact abelian topological group;
		\item[{\rm \romannumeral4):}] the flow $\{\widetilde{\bf{v}} \cdot t \}_{t \in \R}$ on $\mathrm{E}(\bf{v})$ is uniquely ergodic with the Haar measure, denoted by $\mu_{\mathrm{E}(\bf{v})}$, being the only invariant measure; and
		\item[{\rm \romannumeral5):}] for any continuous function $f: \mathrm{E}(\bf{v}) \to \K$,
			\begin{equation*} \label{mv-atg}
			\lim_{x \to +\infty} \frac{1}{x} \int_{0}^{x} f(\widetilde{\bf{v}} \cdot t) \mathrm{d}t=\int_{\mathrm{E}(\bf{v})} f \mathrm{d} \mu_{\mathrm{E}(\bf{v})},
			\end{equation*}
		uniformly for all $\widetilde{\bf{v}} \in \mathrm{E}(\bf{v})$.
	\end{itemize}
\end{lem}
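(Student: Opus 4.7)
I would prove the five items in the order stated. Parts (i) and (ii) follow from direct arguments using the isometry (\ref{sft-is}) and Lemma \ref{mv-eq}. Part (iii) is the technical heart and amounts to verifying that the operations (\ref{prdt}) and (\ref{invs}) are isometric. Once (iii) is established, (iv) and (v) will reduce to standard facts about translations on compact abelian groups.

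For (i) and (ii), I fix $\widetilde{\mathbf{v}} = \lim_n \mathbf{v} \cdot t_n \in \mathrm{E}(\mathbf{v})$. The inclusion $\mathrm{E}(\widetilde{\mathbf{v}}) \subseteq \mathrm{E}(\mathbf{v})$ is immediate, since $\widetilde{\mathbf{v}} \cdot s = \lim_n \mathbf{v} \cdot (t_n + s)$ by (\ref{sft-is}) and $\mathrm{E}(\mathbf{v})$ is closed. For the converse, (\ref{sft-is}) gives
\[
\|\widetilde{\mathbf{v}} \cdot (-t_n) - \mathbf{v}\|_{\infty} = \|\widetilde{\mathbf{v}} \cdot (-t_n) - (\mathbf{v} \cdot t_n) \cdot (-t_n)\|_{\infty} = \|\widetilde{\mathbf{v}} - \mathbf{v} \cdot t_n\|_{\infty} \to 0,
\]
so $\mathbf{v} \in \mathrm{E}(\widetilde{\mathbf{v}})$, which proves (i). For (ii), I would apply Lemma \ref{mv-eq} in both directions, using (i) to interchange the roles of $\widetilde{\mathbf{v}}$ and $\mathbf{v}$ and obtain the two-sided module inclusion.

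For (iii), compactness is Lemma \ref{E-cpt} and commutativity of (\ref{prdt}) is built into the definition; the substantive content is well-definedness and continuity of the operations. For the product, a double application of (\ref{sft-is}) yields
\[
\|\mathbf{v} \cdot (t_n + s_n) - \mathbf{v} \cdot (t_n' + s_n')\|_{\infty} \leq \|\mathbf{v} \cdot s_n - \mathbf{v} \cdot s_n'\|_{\infty} + \|\mathbf{v} \cdot t_n - \mathbf{v} \cdot t_n'\|_{\infty},
\]
which passes to the limit as
\[
\|\widetilde{\mathbf{v}}_1 \cdot \widetilde{\mathbf{u}}_1 - \widetilde{\mathbf{v}}_2 \cdot \widetilde{\mathbf{u}}_2\|_{\infty} \leq \|\widetilde{\mathbf{v}}_1 - \widetilde{\mathbf{v}}_2\|_{\infty} + \|\widetilde{\mathbf{u}}_1 - \widetilde{\mathbf{u}}_2\|_{\infty},
\]
simultaneously giving independence of the approximating sequences and continuity of multiplication. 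For the inverse, a direct computation using (\ref{sft-is}) twice gives $\|\mathbf{v} \cdot (-t_n) - \mathbf{v} \cdot (-t_n')\|_{\infty} = \|\mathbf{v} \cdot t_n - \mathbf{v} \cdot t_n'\|_{\infty}$, so (\ref{invs}) is well-defined and inversion is itself an isometry; the identity is $\mathbf{v}$ by construction.

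Finally, once (iii) is in hand, taking $s_n \equiv t$ in (\ref{prdt}) shows that the shift $\widetilde{\mathbf{v}} \cdot t$ coincides with group-translation by $\mathbf{v} \cdot t$, and the orbit $\{\mathbf{v} \cdot t : t \in \R\}$ is a dense one-parameter subgroup by the very definition of $\mathrm{E}(\mathbf{v})$. Hence the flow is a minimal translation on a compact abelian group, which is uniquely ergodic with respect to the Haar measure $\mu_{\mathrm{E}(\mathbf{v})}$ by classical Kronecker-flow theory; this gives (iv), and (v) then follows from the standard uniform Birkhoff theorem for continuous observables on uniquely ergodic systems. The main obstacle, I expect, is the bookkeeping in (iii): one must juggle limits of pairs $(t_n, s_n)$ along possibly distinct approximating sequences and verify that the isometric bounds survive the passage to the closure, which is conceptually clean but technically delicate.
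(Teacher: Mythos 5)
Your proposal is correct and follows essentially the same route as the paper: the inclusion and isometry argument for (i), the module-comparison via Lemma \ref{mv-eq} / Fink's theorem for (ii), the isometry bound on the group operations for (iii), and then reducing (iv)–(v) to the standard theory of minimal rotations on compact abelian groups and the uniform ergodic theorem. The only difference is cosmetic: you spell out the triangle/isometry estimates for well-definedness and continuity in (iii), which the paper compresses into a one-line appeal to (\ref{sft-is}).
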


\begin{proof}
\fbox{{\rm {\romannumeral1})}}\ : Let $\widetilde{\bf{v}}\in\mathrm{E}(\bf{v})$. Since $\mathrm{E}(\bf{v})$ is invariant under the shift (\ref{sft-v}), we have $\mathrm{Orb}({\widetilde{\bf{v}}})\subseteq \mathrm{E}(\bf{v})$. By Lemma \ref{E-cpt}, we know that $\mathrm{E}(\bf{v})$ is closed. Thus $\mathrm{E}(\widetilde{\bf{v}})=\overline{ \mathrm{Orb}({\widetilde{\bf{v}}}) }^{\|\cdot\|_{\infty}}\subseteq \mathrm{E}(\bf{v})$. Conversely, assume that $\lim\limits_{n\rightarrow\infty}{\bf{v}}\cdot t_n=\widetilde{\bf{v}}\in\mathrm{E}(\bf{v})$. Then one has ${\bf{v}}=\lim\limits_{n\rightarrow\infty}\widetilde{\bf{v}}\cdot (-t_n)$ which implies $\bf{v}\in\mathrm{E}(\widetilde{\bf{v}})$. Thus by a similar argument, we have $\mathrm{E}(\widetilde{\bf{v}})= \mathrm{E}(\bf{v})$.

\fbox{{\rm {\romannumeral2})}}\ : Because of $\x{m-v}$, it is sufficient to show that for any fixed $f\in\mathcal{AP}(\mathbb{R},\mathbb{R})$, one has $\mathcal{M}_{\widetilde{f}}=\mathcal{M}_{f}$ for each $\widetilde{f}\in\mathrm{E}(f)$. We first claim that $\mathcal{M}_{\widetilde{f}}\subseteq \mathcal{M}_{f}$. In fact, assume that $\lim\limits_{n\rightarrow\infty}{f}\cdot t_n=\widetilde{f}\in\mathrm{E}(f)$. This implies that for any $\epsilon>0$, there exists an $N_0:=N_0(\epsilon)>0$ such that 
	\begin{equation} \label{atg1}
	|\widetilde{f}(x)-f(x+t_n)|<\epsilon, \qquad \mbox{for~all~} n>N_0 \mbox{~and~} x\in\mathbb{R}.
	\end{equation}
For any sequence $\{s_n\}_{n \in \N} \subset \R$ such that $\lim\limits_{n\rightarrow\infty} f\cdot s_n$ exists uniformly on the real axis, it follows from \x{atg1} that  $\lim\limits_{n\rightarrow\infty} \widetilde{f}\cdot s_n$ exists as well. By \cite[Theorem 4.5]{Fink}, we complete the proof of the claim. Conversely, if $\lim\limits_{n\rightarrow\infty}{f}\cdot t_n=\widetilde{f}\in\mathrm{E}(f)$, then we have $f=\lim\limits_{n\rightarrow+\infty}\widetilde{f}\cdot (-t_n)$. Along line with a similar argument, we have $\mathcal{M}_{f}\subseteq\mathcal{M}_{\widetilde{f}}$, and have the desired result.

\fbox{{\rm {\romannumeral3})}}\ : By \x{sft-is}, we may obtain that operations of the product \x{prdt} and the inverse \x{invs} are continuous on $\mathrm{E}({\bf{v}})$. 

\fbox{{\rm {\romannumeral4})}}\ : By \x{prdt}, we have $\widetilde{\bf{v}} \cdot t= ({\bf{v}}\cdot t) \cdot \widetilde{\bf{v}}$ for any $\widetilde{\bf{v}}\in\mathrm{E}(\bf{v})$ and $t \in \mathbb{R}$. This implies that the shift $\widetilde{\bf{v}} \cdot t$ can be regarded as a rotation on $\mathrm{E}(\bf{v})$. Then the result is deduced from {\romannumeral1}), {\romannumeral3}) and \cite[Theorem 6.20]{Wa82}.

\fbox{{\rm {\romannumeral5})}}\ : The result is deduced from {\romannumeral4}) and the unique ergodic theorem \cite[Theorem 6.19]{Wa82}.
\end{proof}

\section{Almost periodic Sturm-Liouville operators} \label{sec-sl}
\setcounter{equation}{0}

As mentioned in the Introduction, it is not sufficient to study the almost periodic Sturm-Liouville operator on its own. The operator \x{sl} needs be embedded into a family of operators as follows.

\subsection{Operators associated with an element of the hull $\mathrm{E}(\textbf{v})$}\label{ss-slh}

Let $\textbf{v} \in \mathcal{AP}_*(\R,\R^3)$. For any $\widetilde{\textbf{v}}:=\Big(\frac{1}{\widetilde p},\widetilde q,\widetilde w\Big)\in \mathrm{E}(\textbf{v})$, consider the Hilbert space $$\mathcal{L}^2(\mathbb{R},\widetilde{w}(x)\mathrm{d}x):=\left\lbrace f:\disp\int_{\mathbb{R}}|f(x)|^2\widetilde{w}(x)\mathrm{d}x<\infty \right\rbrace $$ with the following inner product
$$
\left\langle f,g\right\rangle_{\widetilde{\textbf{v}}} :=\int_{\mathbb{R}}f(x)^*g(x)\widetilde{w}(x)\mathrm{d}x\quad \;\text{for all}\; f,g\in \mathcal{L}^2(\mathbb{R},\widetilde{w}(x)\mathrm{d}x),
$$
and the Sturm-Liouville differential expression 
	\begin{equation} \label{sl-ex}
	\tau_{\widetilde{\textbf{v}}}f:=\frac{1}{\widetilde{w}(x)}\left( -\frac{\mathrm{d}}{\mathrm{d}x}\widetilde{p}(x)\frac{\mathrm{d}f(x)}{\mathrm{d}x}+\widetilde{q}(x)f(x)\right)\quad \;\text{for all}\; f \in \mathcal{D}(\tau_{\widetilde{\textbf{v}}}),
	\end{equation}
where $\mathcal{D}(\tau_{\widetilde{\textbf{v}}}):=\left\lbrace f\in \mathcal{L}^2(\mathbb{R},\widetilde{w}(x)\mathrm{d}x):f,\widetilde{p}f^{\prime}\in \mathcal{AC}(\mathbb{R},\mathrm{d}x), \tau_{\widetilde{\textbf{v}}}f\in \mathcal{L}^2(\mathbb{R},\widetilde{w}(x)\mathrm{d}x) \right\rbrace$ is the \textsl{maximal domain} of $\tau_{\widetilde{\textbf{v}}}$. Note that all items depend on $\widetilde{\textbf{v}}$. For $f,g\in\mathcal{D}(\tau_{\widetilde{\textbf{v}}})$ and $-\infty<c<d<+\infty$, using integration by parts twice, we obtain the Lagrange identity as follows.
	\begin{equation*}
	\int_{c}^{d}g(x)^*\tau_{\widetilde{\textbf{v}}} f(x)\widetilde{w}(x)\mathrm{d}x=\mathrm{W}_c(g^*,f;\widetilde{\textbf{v}})-\mathrm{W}_d(g^*,f;\widetilde{\textbf{v}})+\int_{c}^{d}\tau_{\widetilde{\textbf{v}}} g(x)^*f(x)\widetilde{w}(x)\mathrm{d}x,
	\end{equation*}
where 
	\begin{equation}\label{mwron}
	\mathrm{W}_x(f_1,f_2;\widetilde{\textbf{v}}):=\Big(f_1(\widetilde{p}f_2^\prime)-(\widetilde{p}f_1^\prime) f_2\Big)(x)
	\end{equation}
is called the \textsl{modified Wronskian} associated with $\widetilde{\textbf{v}}$. Taking the limit $c\rightarrow-\infty$ and $d\rightarrow+\infty$, one has
	\begin{equation*}
	\left\langle g,\tau_{\widetilde{\textbf{v}}} f\right\rangle_{\widetilde{\textbf{v}}}=\mathrm{W}_{-\infty}(g^*,f;\widetilde{\textbf{v}})-\mathrm{W}_{+\infty}(g^*,f;\widetilde{\textbf{v}})+\left\langle \tau_{\widetilde{\textbf{v}}} g,f\right\rangle_{\widetilde{\textbf{v}}},
	\end{equation*}
where $\mathrm{W}_{\pm\infty}(g^*,f;\widetilde{\textbf{v}})$ is regarded as a limit. For any $z \in\mathbb{C}$ and any two solutions $\phi_1$ and $\phi_2$ of $\tau_{\widetilde{\textbf{v}}} \phi=z\phi$, we know that
$$
\mathrm{W}(\phi_1,\phi_2;\widetilde{\textbf{v}}):=\mathrm{W}_x(\phi_1,\phi_2;\widetilde{\textbf{v}})
$$ 
is independent of $x$. Moreover, $\mathrm{W}(\phi_1,\phi_2;\widetilde{\textbf{v}})\neq 0$ if and only if $\phi_1$ and $\phi_2$ are linearly independent.

The differential operator $\tau_{\widetilde{\textbf{v}}}$ is called \textsl{limit point} at $+\infty$ if there exists a $z_0\in\mathbb{C}$ such that at least one solution of $\tau_{\widetilde{\textbf{v}}} \phi=z_0\phi$ does not belong to $\mathcal{L}^2(\mathbb{R}_+,\widetilde{w}(x)\mathrm{d}x)$. A similar definition applies at $-\infty$. If $\tau_{\widetilde{\textbf{v}}}$ is limit point at both $+\infty$ and $-\infty$, then $\tau_{\widetilde{\textbf{v}}}$ is called to be \textsl{limit point}. The concept of the limit point was introduced by H. Weyl. For more details, see \cite[Theorem 10.1.1]{Hille}. Due to \x{v} and \cite[Theorem 6.3]{J1987}, we have

\begin{lem}\label{lp}
For any $\widetilde{\bf{v}} \in \mathrm{E}(\bf{v})$, the differential expression	$\tau_{\widetilde{\bf{v}}}$ defined by \xx{sl-ex} is limit point.
\end{lem}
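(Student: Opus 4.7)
The plan is to reduce the limit point claim to a well-known Weyl--Titchmarsh criterion after extracting, uniformly across the hull, two-sided bounds on the coefficients. I will proceed in three steps: (a) uniform bounds on $\widetilde p,\widetilde w,\widetilde q$; (b) reduction of $\tau_{\widetilde{\bf v}}\phi=z\phi$ to a system with bounded coefficients via Pr\"ufer variables, which has the virtue of not differentiating $\widetilde p$; (c) invoking a classical limit point theorem for such systems.

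For step (a), the inclusion $\mathrm{E}({\bf v})\subset \mathcal{AP}_*(\R,\R^3)$ (recorded just after Definition~\ref{hull}) gives $\widetilde p,\widetilde w\in\mathcal{AP}_+(\R,\R)$ and $\widetilde q\in\mathcal{AP}(\R,\R)$ for any $\widetilde{\bf v}=(1/\widetilde p,\widetilde q,\widetilde w)\in \mathrm{E}({\bf v})$. Lemma~\ref{lm-1/f}~\romannumeral1) combined with Remark~\ref{re-1/f}~\romannumeral1) then supplies a single $\delta>0$, independent of the choice of $\widetilde{\bf v}$, with $\widetilde p(x),\widetilde w(x)\geq \delta$ for every $x\in\R$, and boundedness of Bohr almost periodic functions yields a common $M>0$ with $\max\{\|\widetilde p\|_\infty,\|\widetilde w\|_\infty,\|\widetilde q\|_\infty\}\leq M$. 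So the coefficients of $\tau_{\widetilde{\bf v}}$ are uniformly two-sided bounded across the entire hull.

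For steps (b) and (c), I would pass to Pr\"ufer variables $f=r\sin\theta$, $\widetilde p f'=r\cos\theta$, which involve only $\widetilde p,\widetilde q,\widetilde w$ themselves and not any derivatives, converting $\tau_{\widetilde{\bf v}}f=zf$ into the system
\begin{align*}
\theta'&=\frac{\cos^2\theta}{\widetilde p}+(z\widetilde w-\widetilde q)\sin^2\theta,\\
(\log r)'&=\Big(\frac{1}{\widetilde p}-z\widetilde w+\widetilde q\Big)\sin\theta\cos\theta,
\end{align*}
whose coefficients are bounded in view of step (a). Classical Weyl--Titchmarsh theory for Sturm--Liouville operators with such bounded data (see \cite[Theorem~10.1.1]{Hille}, or directly \cite[Theorem~6.3]{J1987} as cited in the statement) then gives the limit point property at both $\pm\infty$: if, for some nonreal $z_0$, two linearly independent solutions belonged to $\mathcal{L}^2(\R_+,\widetilde w\,\mathrm{d}x)$, one can use the Pr\"ufer asymptotics to force the constant modified Wronskian \eqref{mwron} to vanish, contradicting linear independence.

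The main obstacle, and the reason I would avoid a direct Liouville transformation to Schr\"odinger form, is that Bohr almost periodicity of $\widetilde p$ does not entail any differentiability, so the resulting potential would contain $\widetilde p''$ only in a distributional sense; the Pr\"ufer route sidesteps this entirely. A convenient byproduct of the plan is that the constants $\delta,M$ do not depend on $\widetilde{\bf v}\in \mathrm{E}({\bf v})$, so the limit point conclusion holds uniformly over the hull, a uniformity that will matter later when combined with the unique ergodicity stated in Lemma~\ref{atg}.
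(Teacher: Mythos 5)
Your core move coincides with the paper's: establish boundedness of the coefficients across the hull (and the lower bounds $\widetilde p,\widetilde w\geq\delta$ from Lemma~\ref{lm-1/f}~\romannumeral1) and Remark~\ref{re-1/f}~\romannumeral1)), then invoke \cite[Theorem~6.3]{J1987}. The paper's proof is exactly the citation ``Due to \x{v} and \cite[Theorem~6.3]{J1987}''; your step~(a) just makes explicit the verification of its hypotheses, which is fine and arguably worth spelling out.

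However, the extra sketch in your step~(c) is not a valid argument and should be dropped. First, the Pr\"ufer system you wrote (with a real angle $\theta$) only makes sense for real spectral parameter; for nonreal $z_0$ the angle would be complex and the formulas change. Second, and more substantively, being in $\mathcal{L}^2(\R_+,\widetilde w\,\mathrm{d}x)$ for both of two linearly independent solutions does \emph{not} force the modified Wronskian \eqref{mwron} to vanish --- indeed the Wronskian is a nonzero constant precisely because the solutions are independent, and square-integrability of $\phi$ gives no control on $\widetilde p\phi'$. The actual mechanism hidden inside \cite[Theorem~6.3]{J1987} is Weyl's nested-disk argument (the radius of the Weyl circle over $[0,b]$ is controlled by $(\int_0^b|\phi_2|^2\widetilde w)^{-1}$, and one shows this integral diverges), which is a different contradiction than Wronskian vanishing. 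Since you already cite the theorem, the proof still goes through; but the parenthetical sketch misrepresents the reason. Also, ``the limit point conclusion holds uniformly over the hull'' is a slightly odd phrasing: limit point is a yes/no property, so there is nothing quantitative to be uniform about --- all that is needed, and all the lemma asserts, is that it holds for each $\widetilde{\bf v}\in\mathrm{E}({\bf v})$.
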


Furthermore, it follows from \cite[Lemma 9.4 and Theorem 9.6]{T2014} that

\begin{lem}\label{sa}
The Sturm-Liouville operator, defined by 
	\begin{equation*} 
	\begin{split}
	L_{\widetilde{\bf{v}}}:\quad \mathcal{D}(L_{\widetilde{\bf{v}}})&\rightarrow \mathcal{L}^2(\mathbb{R},\widetilde{w}(x)\mathrm{d}x)\\
	f&\mapsto \tau_{\widetilde{\bf{v}}}f,
	\end{split}
	\end{equation*}
is densely defined and self-adjoint on $\mathcal{L}^2(\mathbb{R},\widetilde{w}(x)\mathrm{d}x)$, where $\mathcal{D}(L_{\widetilde{\bf{v}}}):=\mathcal{D}(\tau_{\widetilde{\bf{v}}})$.
\end{lem}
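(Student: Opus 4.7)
The plan is to reduce the claim to the standard Weyl--Kodaira theory for Sturm--Liouville operators, which is precisely the content of \cite[Lemma 9.4 and Theorem 9.6]{T2014}. The bridge to that framework is the verification that our almost periodic coefficients satisfy the regularity and positivity hypotheses that the general theory demands. First I would observe that, by Lemma \ref{lm-1/f} together with Remark \ref{re-1/f}, the hypothesis $\widetilde{\textbf{v}}\in\mathrm{E}(\textbf{v})\subset \mathcal{AP}_*(\R,\R^3)$ gives uniform positive lower bounds $\widetilde{p}(x)\geq \delta_p>0$ and $\widetilde{w}(x)\geq \delta_w>0$, while $\widetilde{q}$ is bounded and continuous. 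Consequently $\tfrac{1}{\widetilde{p}},\widetilde{q},\widetilde{w}$ are locally integrable (in fact continuous), and $\tau_{\widetilde{\textbf{v}}}$ is a regular Sturm--Liouville expression on every compact subinterval of $\R$, which is exactly the setting of Chapter 9 of \cite{T2014}.

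The second step is dense definition. Any $f\in C_c^\infty(\R)$ lies in $\mathcal{D}(\tau_{\widetilde{\textbf{v}}})$: continuity of $\widetilde{p}$ makes $\widetilde{p}f'$ absolutely continuous, and $\tau_{\widetilde{\textbf{v}}}f$ has compact support and is bounded (using the positive lower bound on $\widetilde{w}$), hence lies in $\mathcal{L}^2(\R,\widetilde{w}\mathrm{d}x)$. Since $\widetilde{w}$ is sandwiched between two positive constants on every compact set, $C_c^\infty(\R)$ remains dense in $\mathcal{L}^2(\R,\widetilde{w}\mathrm{d}x)$, and dense definition of $L_{\widetilde{\textbf{v}}}$ follows.

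The third and decisive step is self-adjointness. Symmetry is deduced from the Lagrange identity already recorded in the excerpt:
\[
\langle g,\tau_{\widetilde{\textbf{v}}} f\rangle_{\widetilde{\textbf{v}}}-\langle \tau_{\widetilde{\textbf{v}}} g,f\rangle_{\widetilde{\textbf{v}}}=\mathrm{W}_{-\infty}(g^*,f;\widetilde{\textbf{v}})-\mathrm{W}_{+\infty}(g^*,f;\widetilde{\textbf{v}}),
\]
once one shows that both boundary Wronskians vanish for $f,g\in\mathcal{D}(\tau_{\widetilde{\textbf{v}}})$. This is where Lemma \ref{lp} intervenes: the limit point property at $+\infty$ (resp.\ $-\infty$) forces $\mathrm{W}_{+\infty}(g^*,f;\widetilde{\textbf{v}})=0$ (resp.\ $=0$ at $-\infty$) for all maximal-domain pairs, and simultaneously forces the deficiency indices of the minimal operator to be $(0,0)$. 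By von Neumann's classification this means the symmetric maximal operator $L_{\widetilde{\textbf{v}}}$ coincides with its adjoint, i.e.\ is self-adjoint. This is the content of \cite[Theorem 9.6]{T2014}, to which I would simply appeal.

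The main obstacle is the vanishing of the boundary Wronskians on the maximal domain. This is not purely formal: the standard argument approximates $f,g\in\mathcal{D}(\tau_{\widetilde{\textbf{v}}})$ by elements of the minimal domain, exploits that $f$ and $\tau_{\widetilde{\textbf{v}}}f$ are both in $\mathcal{L}^2$ near $\pm\infty$, and invokes Weyl's alternative (the limit point--limit circle dichotomy) to rule out the existence of two linearly independent $\mathcal{L}^2$ solutions, thereby forcing the Wronskian limits to zero. Once Lemma \ref{lp} is accepted, the remaining work—dense definition and the reduction to deficiency indices—is routine bookkeeping, and the lemma follows by direct citation of the general Sturm--Liouville theory in \cite{T2014}.
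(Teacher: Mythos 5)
Your proposal takes essentially the same approach as the paper: the paper's proof consists solely of invoking Lemma \ref{lp} together with \cite[Lemma 9.4 and Theorem 9.6]{T2014}, and you correctly identify this as the relevant machinery, with the limit point property supplying deficiency indices $(0,0)$.

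One cautionary note on your elaboration of dense definedness: the claim that ``any $f\in C_c^\infty(\R)$ lies in $\mathcal{D}(\tau_{\widetilde{\textbf{v}}})$: continuity of $\widetilde{p}$ makes $\widetilde{p}f'$ absolutely continuous'' is not correct in the generality at hand. A Bohr almost periodic $\widetilde p$ is continuous and bounded away from zero, but it need not be absolutely continuous or of bounded variation, so the product $\widetilde p f'$ for $f\in C_c^\infty$ is merely continuous, and the quasi-derivative $(\widetilde p f')'$ need not exist as a locally integrable function. Thus $C_c^\infty(\R)\not\subset\mathcal{D}(\tau_{\widetilde{\textbf{v}}})$ in general. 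The correct construction of a dense core is exactly what \cite[Lemma 9.4]{T2014} provides (e.g.\ via functions of the form $f(x)=\int_0^x g(t)/\widetilde p(t)\,\mathrm{d}t$ with $g$ smooth, suitably truncated), so your final appeal to that reference saves the argument, but the $C_c^\infty$ shortcut should be discarded.
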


Since a family of operators is under consideration, we recall a notion of convergence of operators. For each $n \in \mathbb{N}_0:=\mathbb{N}\cup\{0\}$, let $L_n$ be a self-adjoint operator on a Hilbert space $\mathcal{H}_n$. Define
\begin{equation*}
\begin{split}
J_n:\quad \mathcal{H}_0&\rightarrow \mathcal{H}_n\\
f&\mapsto f
\end{split}
\end{equation*}
We say  $L_n\rightarrow L_0$ \textsl{in generalized norm resolvent sense} if there exists a $z_0\in\mathbb{C}$ satisfying $\Im z_0\neq 0$, such that
	\begin{equation} \label{nrs}
	\big\|J_n^*(L_n-z_0I)^{-1}J_n-(L_0-z_0I)^{-1}\big\|\rightarrow 0,\qquad\text{as $n\rightarrow\infty$}.
	\end{equation}
For more details, see \cite{TWXZ}.

\begin{lem}\label{spe-inv}
	For any ${\widetilde{\bf{v}}}\in\mathrm{E}({\bf{v}})$, $\sigma(L_{{\widetilde{\bf{v}}}})=\sigma(L_{{\bf{v}}})$.
\end{lem}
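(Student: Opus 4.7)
My plan is to combine two classical ingredients: (a) shifts of the coefficient give rise to Sturm--Liouville operators that are unitarily equivalent to the original one, hence isospectral; (b) elements of the hull are uniform limits of such shifts, so the convergence result advertised just before the lemma (generalized norm resolvent sense, with coefficient-dependent Hilbert spaces, cf.\ the embeddings $J_n$ in \eqref{nrs}) transfers spectral information from the sequence to the limit. Lemma \ref{atg} (\romannumeral1) then lets me interchange the roles of $\bf{v}$ and $\widetilde{\bf{v}}$ to upgrade one inclusion into equality.

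First, for every $t\in\R$ I would show that $L_{{\bf{v}}\cdot t}$ and $L_{{\bf{v}}}$ are unitarily equivalent. The natural candidate is
\[
U_t:\mathcal{L}^2(\R,w(x)\mathrm{d}x)\to \mathcal{L}^2(\R,w(x+t)\mathrm{d}x),\qquad (U_tf)(x)=f(x+t),
\]
which is unitary by a simple change of variables. A direct computation using \eqref{sl-ex} gives $U_t\,L_{\bf{v}}=L_{{\bf{v}}\cdot t}\,U_t$ on the (shifted) maximal domain, so $\sigma(L_{{\bf{v}}\cdot t})=\sigma(L_{\bf{v}})$ for every $t\in\R$.

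Next, since $\widetilde{\bf{v}}\in\mathrm{E}({\bf{v}})$, there is a sequence $\{t_n\}\subset\R$ with ${\bf{v}}\cdot t_n\to\widetilde{\bf{v}}$ uniformly on $\R$ (in the metric $\|\cdot\|_\infty$ of \eqref{dist-v}), and moreover by Lemma \ref{lm-1/f} (\romannumeral1) the functions $\widetilde p,\widetilde w$ are uniformly bounded below by a strictly positive constant depending only on $\bf{v}$. Invoking the generalized norm resolvent convergence criterion developed in \cite{TWXZ} (which is tailored precisely to handle the change of weight, using the embeddings $J_n$ in \eqref{nrs}), this uniform convergence of coefficients implies
\[
L_{{\bf{v}}\cdot t_n}\longrightarrow L_{\widetilde{\bf{v}}}\qquad \text{in generalized norm resolvent sense.}
\]
The hard step here is this convergence: when the weight $\widetilde{w}$ varies the underlying Hilbert spaces are not the same, so one has to push all resolvents back onto a common reference space via $J_n$ and estimate carefully; modulo the coefficient-dependent weight, the argument is the usual one based on the second resolvent identity applied to $(L_{{\bf{v}}\cdot t_n}-z_0I)^{-1}-(L_{\widetilde{\bf{v}}}-z_0I)^{-1}$ and the uniform-in-$n$ boundedness of these resolvents. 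I would cite this step rather than redo it.

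Finally, generalized norm resolvent convergence is known to preserve spectra in the sense that $\sigma(L_{\widetilde{\bf{v}}})\subseteq\liminf_n\sigma(L_{{\bf{v}}\cdot t_n})$ and $\limsup_n\sigma(L_{{\bf{v}}\cdot t_n})\subseteq\sigma(L_{\widetilde{\bf{v}}})$. Since each $\sigma(L_{{\bf{v}}\cdot t_n})=\sigma(L_{\bf{v}})$ is the \emph{same} closed set, both $\liminf$ and $\limsup$ equal $\sigma(L_{\bf{v}})$, giving
\[
\sigma(L_{\widetilde{\bf{v}}})\subseteq\sigma(L_{\bf{v}})\quad\text{and}\quad \sigma(L_{\bf{v}})\subseteq\sigma(L_{\widetilde{\bf{v}}}),
\]
hence the claimed equality. (Equivalently, one direction plus Lemma \ref{atg} (\romannumeral1), which gives ${\bf{v}}\in\mathrm{E}(\widetilde{\bf{v}})$ and lets us run the same argument in reverse, suffices.) The only genuinely nontrivial input is the generalized norm resolvent convergence; everything else is bookkeeping.
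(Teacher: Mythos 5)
Your proposal is correct and follows essentially the same route as the paper: translation-invariance of the spectrum via unitary equivalence, followed by generalized norm resolvent convergence $L_{{\bf{v}}\cdot t_n}\to L_{\widetilde{\bf{v}}}$ (citing \cite{TWXZ}), and finally the spectral-convergence theorem to pass to the limit. The paper cites \cite[Theorem 2.4]{TWXZ} directly for the spectral set convergence $\sigma(L_{\widetilde{\bf{v}}})=\lim_n\sigma(L_{{\bf{v}}\cdot t_n})$, which packages your $\liminf$/$\limsup$ inclusions into a single statement, so your extra remark about Lemma~\ref{atg}~\romannumeral1) is not needed.
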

\begin{proof}
For any ${\widetilde{\bf{v}}}\in\mathrm{E}({\bf{v}})$, there exists $\{t_n\}\subseteq \mathbb{R}$ such that $\textbf{v}\cdot t_n\rightarrow {\widetilde{\bf{v}}}$. Thus by \cite[Theorem 2.10]{TWXZ}, we have $\sigma(L_{{\widetilde{\bf{v}}}})=\lim\limits_{n\rightarrow\infty}\sigma(L_{\textbf{v}\cdot t_n})$, where $\lim\limits_{n\rightarrow\infty}\sigma(L_{\textbf{v}\cdot t_n})$ denotes the set of all $\lambda$ for which there is a sequence $\lambda_n\in\sigma(L_{\textbf{v}\cdot t_n})$ converging to $\lambda$. Since $L_{\textbf{v}\cdot t_n}$ is a translation of $L_{{\bf{v}}}$, we have $\sigma(L_{\textbf{v}\cdot t_n})=\sigma(L_{{\bf{v}}})$. Therefore, $\sigma(L_{{\widetilde{\bf{v}}}})=\sigma(L_{{\bf{v}}})$.
\end{proof}

\subsection{Green's function}

The so-called \textsl{Green's function} is a fundamental tool for studying the spectral theory of Sturm-Liouville operators. For more details, see \cite{S1965,T2014,GN2024}. For $\widetilde{\textbf{v}}=\big(\frac{1}{\widetilde{p}},\widetilde{q},\widetilde{w}\big)$ and $z\in \mathbb{C}\setminus\sigma(L_{\widetilde{\bf{v}}})$, we consider the differential equation
	\begin{equation}\label{tauz}
	\tau_{\widetilde{\textbf{v}}}\phi(x)=\frac{1}{\widetilde{w}(x)}\left( -\frac{\mathrm{d}}{\mathrm{d}x}\widetilde{p}(x)\frac{\mathrm{d}\phi(x)}{\mathrm{d}x}+\widetilde{q}(x)\phi(x)\right)=z\phi(x).
	\end{equation}
By Lemma \ref{lp}, up to a constant multiple, equation (\ref{tauz}) has a unique nontrivial solution $\phi_+(x,z;\widetilde{\textbf{v}})\in \mathcal{L}^2(\R_+,\widetilde{w}(x)\mathrm{d}x)$ and a unique nontrivial solution $\phi_-(x,z;\widetilde{\textbf{v}})\in \mathcal{L}^2(\R_-,\widetilde{w}(x)\mathrm{d}x)$, which are known as \textsl{Weyl's solutions}; see \cite[Theorem 9.9]{T2014}. 
Then the \textsl{Green's function} for $L_{\widetilde{\textbf{v}}}$ is defined by
	\begin{equation} \label{green}
	G(x,y,z;\widetilde{\textbf{v}}):=\frac{1}{\mathrm{W}(\phi_+(\cdot,z;\widetilde{\textbf{v}}),\phi_-(\cdot,z;\widetilde{\textbf{v}});\widetilde{\textbf{v}})}\left\{
	\begin{array}{c}
	\phi_+(x,z;\widetilde{\textbf{v}})\phi_-(y,z;\widetilde{\textbf{v}}),\quad x\geq y,\\
	\phi_-(x,z;\widetilde{\textbf{v}})\phi_+(y,z;\widetilde{\textbf{v}}),\quad x\leq y. \end{array}
	\right.
	\end{equation}
Here $G(x,y,z;\widetilde{\textbf{v}})$ depends on $\widetilde{\textbf{v}}$ as well.
\begin{lem}\label{sft-g}
Let $\widetilde{\bf{v}} \in \mathrm{E}(\bf{v})$ and $z\in \mathbb{C}\setminus\sigma(L_{\widetilde{\bf{v}}})$ be fixed. We have
	\begin{itemize}
		\item[{\rm \romannumeral1):}] $G(x,y,z;\widetilde{\bf{v}})$ is continuous with respect to $x \in \mathbb{R}$ and $y\in \mathbb{R}$.
		\item[{\rm \romannumeral2):}] $G(x+t,y+t,z;\widetilde{\bf{v}})=G(x,y,z;\widetilde{\bf{v}}\cdot t)$, for all $t\in\mathbb{R}$.
	\end{itemize}
\end{lem}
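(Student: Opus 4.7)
The plan is to handle the two parts separately: part (i) is just continuity bookkeeping from the explicit formula \eqref{green}, while part (ii) is a translation-covariance argument that exploits the uniqueness of Weyl's solutions up to a nonzero scalar.

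For \textbf{part (i)}, the strategy is to read off continuity directly from \eqref{green}. I would first note that $\phi_\pm(\cdot,z;\widetilde{\bf v})$ solves the second-order ODE \eqref{tauz}, whose coefficients are continuous (using Lemma \ref{lm-1/f}(i) to guarantee $\widetilde p \ge \delta > 0$, so $1/\widetilde p$ is continuous and bounded). Hence $\phi_\pm$ and the quasi-derivatives $\widetilde p\,\phi_\pm'$ are continuous functions of $x \in \mathbb{R}$. Since $z \notin \sigma(L_{\widetilde{\bf v}})$, $\phi_+$ and $\phi_-$ cannot be scalar multiples of each other (otherwise their common profile would lie in $\mathcal{L}^2(\mathbb{R},\widetilde w(x)\mathrm{d}x)$ and $z$ would be an eigenvalue), so the constant $\mathrm{W}(\phi_+,\phi_-;\widetilde{\bf v})$ in the denominator is nonzero. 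Finally, the two pieces of \eqref{green} agree on the diagonal $x=y$, which glues them into a continuous function on $\mathbb{R}^2$.

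For \textbf{part (ii)}, I would introduce $\psi_\pm(x):=\phi_\pm(x+t,z;\widetilde{\bf v})$ and verify by direct substitution that $\tau_{\widetilde{\bf v}\cdot t}\psi_\pm = z\psi_\pm$; this is immediate because $\widetilde{\bf v}\cdot t$ is defined in \eqref{sft-v} precisely by translating each of $1/\widetilde p,\widetilde q,\widetilde w$ by $t$. The change of variables $u=x+t$ also preserves the relevant $\mathcal{L}^2$ condition, e.g.
\[
\int_0^{\infty}|\psi_+(x)|^2\widetilde w(x+t)\,\mathrm{d}x = \int_t^{\infty}|\phi_+(u,z;\widetilde{\bf v})|^2\widetilde w(u)\,\mathrm{d}u < \infty,
\]
and likewise at $-\infty$. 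By Lemma \ref{lp} and the uniqueness of Weyl's solutions up to a scalar, there exist nonzero constants $C_\pm=C_\pm(t,z)$ with $\phi_\pm(x,z;\widetilde{\bf v}\cdot t) = C_\pm\,\phi_\pm(x+t,z;\widetilde{\bf v})$. Using the definition \eqref{mwron} together with $(\widetilde{\bf v}\cdot t)$'s first coordinate giving the weight $\widetilde p(x+t)$ in the Wronskian, one checks
\[
\mathrm{W}\bigl(\phi_+(\cdot,z;\widetilde{\bf v}\cdot t),\phi_-(\cdot,z;\widetilde{\bf v}\cdot t);\widetilde{\bf v}\cdot t\bigr) = C_+C_-\,\mathrm{W}\bigl(\phi_+(\cdot,z;\widetilde{\bf v}),\phi_-(\cdot,z;\widetilde{\bf v});\widetilde{\bf v}\bigr),
\]
so the factors $C_+C_-$ appearing in the numerator and denominator of \eqref{green} cancel, yielding the claimed identity (the case split $x\gtreqless y$ is preserved because $x+t\gtreqless y+t$).

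I do not expect a serious obstacle. The only subtlety is that Weyl's solutions are only defined up to a nonzero scalar, so the proof must be written in a way that is insensitive to that scalar ambiguity; this is exactly what makes the covariance identity \emph{for $G$}, rather than for $\phi_\pm$, the natural statement.
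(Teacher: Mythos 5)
Your proof is correct and takes essentially the same route as the paper: for (ii) both you and the authors use the uniqueness (up to a nonzero scalar) of Weyl's solutions under translation, define the translated solutions, and observe that the scalar factors cancel between numerator and Wronskian denominator in \eqref{green}. Your treatment of (i) is just a more explicit version of what the paper dismisses as obvious, correctly noting the diagonal gluing and the nonvanishing of the Wronskian.
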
 
\begin{proof}

\fbox{{\rm {\romannumeral1})}}\ : It is obvious by \x{green} and the continuity of $\phi_{\pm}(\cdot,z;\widetilde{\textbf{v}})$.

\fbox{{\rm {\romannumeral2})}}\ : 
For any $t \in \R$, since both $\phi_{\pm}(x+t,z;\widetilde{\textbf{v}})$ and $\phi_{\pm}(x,z;\widetilde{\textbf{v}}\cdot t)$ are solutions of $\tau_{\widetilde{\textbf{v}}\cdot t} \phi= z \phi$ in $\mathcal{L}^2(\R_{\pm},\widetilde{w}\cdot t(x)\mathrm{d}x)$, by the uniqueness (up to a constant multiple) of solutions in $\mathcal{L}^2(\R_{\pm},\widetilde{w}\cdot t(x)\mathrm{d}x)$, we have 
	\begin{equation} \label{uni-phi}
	\phi_{\pm}(x+t,z;\widetilde{\textbf{v}})=k_{\pm}(z;\widetilde{\textbf{v}})\phi_{\pm}(x,z;\widetilde{\textbf{v}}\cdot t),
	\end{equation}
where $k_{\pm}(z;\widetilde{\textbf{v}})$ are non-zero constants. 
Hence for $x\geq y$ and $t \in \mathbb{R}$, we have
	\begin{equation*}
	\begin{split}
	&~~~~G(x+t,y+t,z;\widetilde{\textbf{v}})\\
	&=\frac{\phi_+(x+t,z;\widetilde{\textbf{v}})\phi_-(y+t,z;\widetilde{\textbf{v}})}{\widetilde{p}(x+t)\big(\phi_+(x+t,z;\widetilde{\textbf{v}})\phi_-^\prime(x+t,z;\widetilde{\textbf{v}})-\phi_+^\prime(x+t,z;\widetilde{\textbf{v}})\phi_-(x+t,z;\widetilde{\textbf{v}})\big)} \quad (\text{by \x{green}, \x{mwron}})\\
	&=\frac{\phi_{+}(x,z;\widetilde{\textbf{v}}\cdot t)\phi_{-}(y,z;\widetilde{\textbf{v}}\cdot t)}{\widetilde{p}\cdot t(x)\big( \phi_{+}(x,z;\widetilde{\textbf{v}}\cdot t)\phi_{-}^\prime(x,z;\widetilde{\textbf{v}}\cdot t) -\phi_{+}^\prime(x,z;\widetilde{\textbf{v}}\cdot t)\phi_{-}(x,z;\widetilde{\textbf{v}}\cdot t) \big)}\quad (\text{by \x{uni-phi}})\\
	&=G(x,y,z;\widetilde{\textbf{v}}\cdot t) \quad(\text{by \x{mwron}, \x{green}}).
	\end{split}
	\end{equation*}
The similar argument applies to the case where $x\leq y$. The proof is complete. \end{proof}

\begin{rem}
{\rm Let $z \in \mathbb{C}\setminus \mathbb{R}$ be fixed and take $x=y$ in Lemma \ref{sft-g}. Then the map 
	\[
	(x,\widetilde{\textbf{v}}) \mapsto G(x,x,z;\widetilde{\textbf{v}})
	\] 
defines a cocycle over the shift \x{sft-v} on $\mathbb{R} \times \mathrm{E}(\bf{v})$. An interesting application is the following. We apply Lemma \ref{atg} {\rm {\romannumeral4})} to the observation $f_z(\widetilde{\textbf{v}})=G(0,0,z;\widetilde{\textbf{v}}) \in \mathbb{C}$. Then we have 
	\begin{equation*}
		\begin{split} \mathrm{M}_x\big(G(x,x,z;\widetilde{\textbf{v}})\big)=\int_{\mathrm{E}(\textbf{v})}G(0,0;z;\widetilde{\textbf{v}})\mathrm{d}\mu_{\mathrm{E}(\bf{v})},
		\end{split}
	\end{equation*}
where $\mu_{\mathrm{E}(\bf{v})}$ is the Haar measure on $\mathrm{E}(\bf{v})$.}
\end{rem}

According to \cite[Section 9]{J1987}, the Green's function  can also be written as follows. For $ i=1,2$, suppose $u_i:=u_i(x,z;\widetilde{\textbf{v}})$ are two linearly independent solutions of \x{tauz} with
	\begin{equation*}
	\begin{pmatrix}
	u_1(0,z;\widetilde{\textbf{v}}) & u_2(0,z;\widetilde{\textbf{v}})\\
	\widetilde{p}(0)u_1^{\prime}(0,z;\widetilde{\textbf{v}}) & \widetilde{p}(0)u_2^{\prime}(0,z;\widetilde{\textbf{v}})
	\end{pmatrix}=\begin{pmatrix}
	1 & 0\\
	0 & 1
	\end{pmatrix}.
	\end{equation*} 
Due to the linear independence of $u_1$ and $u_2$, one has
	\begin{equation} \label{upm2}
	\left\{
	\begin{array}{c}
	\phi_+(x,z;\widetilde{\textbf{v}}):=m_{11}(z;\widetilde{\textbf{v}})u_1(x,z;\widetilde{\textbf{v}})+m_{12}(z;\widetilde{\textbf{v}})u_2(x,z;\widetilde{\textbf{v}}),\\
	\phi_-(x,z;\widetilde{\textbf{v}}):=m_{21}(z;\widetilde{\textbf{v}})u_1(x,z;\widetilde{\textbf{v}})+m_{22}(z;\widetilde{\textbf{v}})u_2(x,z;\widetilde{\textbf{v}}),\end{array}
	\right.
	\end{equation}
where $m_{jk}(z;\widetilde{\textbf{v}})$ are constants depending on $z$ and $\widetilde{\textbf{v}}$, for $j,k=1,2$. Beacause of the uniqueness (up to a constant multiple) of solutions in $\mathcal{L}^2(\R_{\pm},\widetilde{w}(x)\mathrm{d}x)$ , we may assume that
	\begin{equation*}
	\mathrm{W}(\phi_+(\cdot,z;\widetilde{\textbf{v}}),\phi_-(\cdot,z;\widetilde{\textbf{v}});\widetilde{\textbf{v}})=1.
	\end{equation*}
Then it follows from \x{green} and \x{upm2} that
	\begin{equation} \label{green2}
	G(x,y,z;\widetilde{\textbf{v}})=\left\{
	\begin{array}{c}
	\sum_{j,k=1}^{2}m_{jk}^+(z;\widetilde{\textbf{v}})u_j(x,z;\widetilde{\textbf{v}})u_k(y,z;\widetilde{\textbf{v}}),\quad x\geq y,\\
	\\
	\sum_{j,k=1}^{2}m_{jk}^-(z;\widetilde{\textbf{v}})u_j(x,z;\widetilde{\textbf{v}})u_k(y,z;\widetilde{\textbf{v}}),\quad x\leq y, \end{array}
	\right.
	\end{equation}
where
	\[
	m_{11}^{\pm}(z;\widetilde{\textbf{v}}):=m_{11}(z;\widetilde{\textbf{v}})m_{21}(z;\widetilde{\textbf{v}}),\quad m_{22}^{\pm}(z;\widetilde{\textbf{v}}):=m_{12}(z;\widetilde{\textbf{v}})m_{22}(z;\widetilde{\textbf{v}}),
	\] 
	\[
	m_{12}^{+}(z;\widetilde{\textbf{v}})=m_{21}^{-}(z;\widetilde{\textbf{v}}):=m_{11}(z;\widetilde{\textbf{v}})m_{22}(z;\widetilde{\textbf{v}}),\quad m_{21}^{+}(z;\widetilde{\textbf{v}})=m_{12}^{-}(z;\widetilde{\textbf{v}}):=m_{12}(z;\widetilde{\textbf{v}})m_{21}(z;\widetilde{\textbf{v}}).
	\]
	
Based on \x{green2}, we derive the following continuity results.

\begin{lem}\label{mgg'-con}
For $n \in \mathbb{N}_0$, let ${\widetilde{\bf{v}}}_n \in \mathrm{E}({\bf{v}})$ and $z\in\mathbb{C}\setminus\sigma(L_{{\bf{v}}})$. Suppose that ${\widetilde{\bf{v}}}_n\rightarrow{\widetilde{\bf{v}}}_0$ and $L_{{\widetilde{\bf{v}}}_n}\rightarrow L_{{\widetilde{\bf{v}}}_0}$ in generalized norm resolvent sense. Then, as $n \to +\infty$, we have
	\begin{itemize}
		\item[{\rm \romannumeral1):}] $m_{jk}^{\pm}(z;{\widetilde{\bf{v}}}_n)\rightarrow m_{jk}^{\pm}(z;{\widetilde{\bf{v}}}_0)$, \quad $j,\, k=1,2$.
		\item[{\rm \romannumeral2):}] $G(x,y,z;{\widetilde{\bf{v}}}_n)\rightarrow G(x,y,z;{\widetilde{\bf{v}}}_0)$  uniformly on any finite interval of $x \in \mathbb{R}$ and $y\in \mathbb{R}$.
		\item[{\rm \romannumeral3):}] $\frac{\mathrm{d}}{\mathrm{d}x}G(x,x,z;{\widetilde{\bf{v}}}_n)\rightarrow \frac{\mathrm{d}}{\mathrm{d}x}G(x,x,z;{\widetilde{\bf{v}}}_0)$ 
		uniformly on any finite interval of $x\in \mathbb{R}$.
	\end{itemize}
\end{lem}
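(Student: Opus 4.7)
The plan is to treat parts (ii) and (iii) as consequences of part (i) once the fundamental solutions $u_j(x,z;\widetilde{\bf{v}})$ and their quasi-derivatives are shown to depend continuously on $\widetilde{\bf{v}}$. The main technical content lies in establishing (i), where pointwise convergence of the scalar coefficients in \x{green2} must be extracted from the generalized norm resolvent convergence.

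First I would establish continuous dependence of the fundamental solutions on the coefficients. Writing \x{tauz} as a first order system for $(u,\widetilde{p}u')$, the coefficient matrix depends continuously on $\widetilde{\bf{v}}$; since $\widetilde{\bf{v}}_n \to \widetilde{\bf{v}}_0$ uniformly (in particular $\|1/\widetilde{p}_n - 1/\widetilde{p}_0\|_\infty\to 0$ by Lemma \ref{lm-1/f}), a standard Gronwall argument yields
\[
u_j(x,z;\widetilde{\bf{v}}_n) \to u_j(x,z;\widetilde{\bf{v}}_0), \qquad \widetilde{p}_n(x)u_j'(x,z;\widetilde{\bf{v}}_n) \to \widetilde{p}_0(x)u_j'(x,z;\widetilde{\bf{v}}_0)
\]
uniformly on any compact interval, for $j=1,2$, and therefore also $u_j'(x,z;\widetilde{\bf{v}}_n) \to u_j'(x,z;\widetilde{\bf{v}}_0)$ uniformly on compacts.

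For part (i), observe that each $m_{jk}^{\pm}(z;\widetilde{\bf{v}})$ is a product of initial values $\phi_{\pm}(0,z;\widetilde{\bf{v}})$ and $\widetilde{p}(0)\phi_{\pm}'(0,z;\widetilde{\bf{v}})$ of the Weyl solutions, and a direct check against the explicit formulas after \x{green2} shows these products are invariant under the residual rescaling $(\phi_+,\phi_-) \mapsto (c\phi_+, c^{-1}\phi_-)$ preserving $\mathrm{W}(\phi_+,\phi_-;\widetilde{\bf{v}})=1$; thus the $m_{jk}^{\pm}$ are intrinsic functions of $L_{\widetilde{\bf{v}}}$. Evaluating \x{green2} at $(0,0)$ gives $G(0,0,z;\widetilde{\bf{v}}) = m_{11}^+(z;\widetilde{\bf{v}})$, and the one-sided $x$- and $y$-derivatives of $G$ at $(0,0)$ combined with the jump relation of the Green's function recover the remaining three coefficients via a linear system whose matrix is nonsingular (its entries are the initial values of $u_1,u_2$ and $\widetilde{p}u_1', \widetilde{p}u_2'$, which are $0$ or $1$ by construction). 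Invoking the generalized norm resolvent convergence together with the framework of \cite{TWXZ}, the pointwise values of $G(\cdot,\cdot,z;\widetilde{\bf{v}}_n)$ entering this system converge to those for $\widetilde{\bf{v}}_0$; combined with the convergence of $u_j$ and $u_j'$ at $0$ above, this gives $m_{jk}^{\pm}(z;\widetilde{\bf{v}}_n) \to m_{jk}^{\pm}(z;\widetilde{\bf{v}}_0)$.

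Parts (ii) and (iii) then follow by substitution. For (ii), the formula \x{green2} presents $G$ as a finite sum of products of the scalars $m_{jk}^{\pm}$ with $u_j(x)u_k(y)$, so uniform convergence on any finite $(x,y)$-rectangle is immediate. For (iii), the symmetry relations $m_{11}^+=m_{11}^-$, $m_{22}^+=m_{22}^-$, $m_{12}^+=m_{21}^-$, $m_{21}^+=m_{12}^-$ make the restriction $x\mapsto G(x,x,z;\widetilde{\bf{v}})$ a single-valued $C^1$ function, and termwise differentiation gives
\[
\frac{\mathrm{d}}{\mathrm{d}x}G(x,x,z;\widetilde{\bf{v}}) = \sum_{j,k=1}^{2} m_{jk}^+(z;\widetilde{\bf{v}}) \bigl(u_j'(x,z;\widetilde{\bf{v}})u_k(x,z;\widetilde{\bf{v}})+u_j(x,z;\widetilde{\bf{v}})u_k'(x,z;\widetilde{\bf{v}})\bigr),
\]
which converges uniformly on compacts by (i) and the continuous dependence of $u_j, u_j'$. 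The main obstacle is the pointwise convergence in (i): norm resolvent convergence of an integral operator does not automatically yield pointwise convergence of its kernel, and one must exploit the ODE structure of the Sturm-Liouville Green's function together with the framework of \cite{TWXZ} to bridge this gap.
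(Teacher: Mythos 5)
Your plan correctly identifies the crux: convergence of the scalar coefficients $m_{jk}^{\pm}$ must be extracted from norm resolvent convergence, and the remaining parts then follow by substitution using the continuous dependence of the fundamental system on the coefficients (which you rightly obtain from Gronwall). Your treatment of parts (ii) and (iii) mirrors the paper's and is fine.

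The problem is that part (i) is not actually proved. You reduce the statement to pointwise convergence of $G(\cdot,\cdot,z;\widetilde{\bf{v}}_n)$ and its one-sided derivatives at the single point $(0,0)$, and then write that this follows from ``the generalized norm resolvent convergence together with the framework of \cite{TWXZ}.'' But you yourself then observe in your final paragraph that ``norm resolvent convergence of an integral operator does not automatically yield pointwise convergence of its kernel, and one must exploit the ODE structure\dots to bridge this gap.'' That is exactly the gap: it is acknowledged but not bridged. Norm convergence of $J_n^*(L_n-zI)^{-1}J_n$ is an operator-norm statement on $\mathcal{L}^2$; it gives convergence of $\langle f, (L_n-zI)^{-1} g\rangle$ for $f,g\in\mathcal{L}^2$, not of kernel values at a point. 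As it stands, the central step of (i) is asserted rather than proved.

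The paper closes the gap with a concrete device you do not use: it tests the resolvent against the fundamental solutions $u_k$ restricted to two disjoint finite intervals $(a_1,b_1)$ and $(a_2,b_2)$. Because these test functions have compact support, the Green's-function integral localizes to one of the two branches in \x{green2} and factorizes, producing the identity $U=U_2 M^+ U_1$ where $U_1,U_2$ are nonsingular Gram matrices built from the $u_k$ and $U$ is a matrix of resolvent inner products. Then $M^+=U_2^{-1}UU_1^{-1}$, $U_i(z;\widetilde{\bf{v}}_n)\to U_i(z;\widetilde{\bf{v}}_0)$ by the ODE continuity you already established, and $U(z;\widetilde{\bf{v}}_n)\to U(z;\widetilde{\bf{v}}_0)$ follows directly from the norm resolvent convergence tested against $\mathcal{L}^2$ functions --- no pointwise kernel convergence needed. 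Your observation that the $m_{jk}^{\pm}$ are intrinsic (rescaling-invariant) quantities is correct and pleasant, but by itself it does not substitute for this weak-to-strong conversion; you would need to supply an argument analogous to the paper's Gram matrix trick, or else prove directly that norm resolvent convergence of Sturm--Liouville operators implies local uniform convergence of the kernel and its quasi-derivatives, which is precisely what the lemma is asserting.
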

\begin{proof}

\fbox{{\rm {\romannumeral1})}}\ : Let $\widetilde{{\bf{v}}}\in\mathrm{E}(\textbf{v})$. For any $z\in\mathbb{C}\setminus\sigma(L_{{\bf{v}}})$ and $-\infty<a_1<b_1<a_2<b_2<+\infty$, denote
	\begin{equation}\label{sol-d}
	u_{kj}(\cdot,z;\widetilde{\textbf{v}}):=u_{k}(\cdot,z;\widetilde{\textbf{v}})\big|_{(a_j, b_j)},\quad k=1,2.
	\end{equation}
Since $\widetilde{{\bf{v}}}$ is real-valued, we have 
	\begin{equation}\label{sol-ad}
	u_{k}(\cdot,z;\widetilde{\textbf{v}})=u_{k}(\cdot,z^*;\widetilde{\textbf{v}})^*,\quad k=1,2.
	\end{equation}
Then for $n,\, i=1,2$, one has
	\begin{equation*}
	\begin{split}
	&\left\langle u_{n2}(\cdot ,z;\widetilde{\textbf{v}}), (L_{\widetilde{\textbf{v}}}-zI)^{-1} u_{i1}(\cdot ,z^*;\widetilde{\textbf{v}})\right\rangle_{\widetilde{\textbf{v}}}\\
	=&\int_{-\infty}^{+\infty}u_{n2}(x ,z;\widetilde{\textbf{v}})^*(L_{\widetilde{\textbf{v}}}-zI)^{-1} u_{i1}(x ,z^*;\widetilde{\textbf{v}})\widetilde{w}(x)\mathrm{d}x\\
	=&\int_{-\infty}^{+\infty}u_{n2}(x ,z;\widetilde{\textbf{v}})^*\int_{-\infty}^{+\infty}G(x,y,z;\widetilde{\textbf{v}})u_{i1}(y ,z^*;\widetilde{\textbf{v}})\widetilde{w}(y)\mathrm{d}y\; \widetilde{w}(x)\mathrm{d}x\\
	=&\int_{a_2}^{b_2}u_{n2}(x ,z;\widetilde{\textbf{v}})^*\int_{a_1}^{b_1}G(x,y,z;\widetilde{\textbf{v}})u_{i1}(y ,z^*;\widetilde{\textbf{v}})\widetilde{w}(y)\mathrm{d}y\; \widetilde{w}(x)\mathrm{d}x\qquad (\text{by \x{sol-d}})\\
	=&\int_{a_2}^{b_2}u_{n2}(x ,z;\widetilde{\textbf{v}})^*\int_{a_1}^{b_1}\bigg(\sum_{j,k=1}^{2}m_{jk}^+(z;\widetilde{\textbf{v}})u_j(x,z;\widetilde{\textbf{v}})u_k(y,z;\widetilde{\textbf{v}})\bigg)u_{i1}(y ,z^*;\widetilde{\textbf{v}})\widetilde{w}(y)\mathrm{d}y\; \widetilde{w}(x)\mathrm{d}x\\
	&\qquad\qquad\qquad\qquad\qquad\qquad\qquad\qquad\qquad\qquad\qquad\qquad\qquad\qquad\qquad\qquad\qquad (\text{by \x{green2}})\\
	=&\sum_{j,k=1}^{2}m_{jk}^+(z;\widetilde{\textbf{v}})\int_{a_2}^{b_2}u_{n2}(x ,z;\widetilde{\textbf{v}})^*u_j(x,z;\widetilde{\textbf{v}})\widetilde{w}(x)\mathrm{d}x\int_{a_1}^{b_1}u_k(y,z;\widetilde{\textbf{v}})u_{i1}(y ,z^*;\widetilde{\textbf{v}})\widetilde{w}(y)\mathrm{d}y\\
	=&\sum_{j,k=1}^{2}m_{jk}^+(z;\widetilde{\textbf{v}})\int_{-\infty}^{+\infty}u_{n2}(x ,z;\widetilde{\textbf{v}})^*u_j(x,z;\widetilde{\textbf{v}})\widetilde{w}(x)\mathrm{d}x\int_{-\infty}^{+\infty}u_k(y,z^*;\widetilde{\textbf{v}})^*u_{i1}(y ,z^*;\widetilde{\textbf{v}})\widetilde{w}(y)\mathrm{d}y\\
	&\qquad\qquad\qquad\qquad\qquad\qquad\qquad\qquad\qquad\qquad\qquad\qquad\qquad\qquad \qquad (\text{by \x{sol-d}}\; {\rm and} \;\x{sol-ad})\\
	=&\sum_{j,k=1}^{2}m_{jk}^+(z;\widetilde{\textbf{v}})\left\langle u_{n2}(\cdot ,z;\widetilde{\textbf{v}}),  u_{j2}(\cdot ,z;\widetilde{\textbf{v}})\right\rangle_{\widetilde{\textbf{v}}}\left\langle u_{k1}(\cdot ,z^*;\widetilde{\textbf{v}}),  u_{i1}(\cdot ,z^*;\widetilde{\textbf{v}})\right\rangle_{\widetilde{\textbf{v}}}.
	\end{split}
	\end{equation*}
That is to say,	$$U(z;\widetilde{\textbf{v}})=U_2(z;\widetilde{\textbf{v}})M^+(z;\widetilde{\textbf{v}})U_1(z;\widetilde{\textbf{v}}),$$
where 
	\begin{equation*}
	\begin{split}
	& U(z;\widetilde{\textbf{v}}):=\begin{pmatrix}
	\left\langle u_{12}(\cdot ,z;\widetilde{\textbf{v}}), (L_{\widetilde{\textbf{v}}}-zI)^{-1} u_{11}(\cdot ,z^*;\widetilde{\textbf{v}})\right\rangle_{\widetilde{\textbf{v}}} & \left\langle u_{12}(\cdot ,z;\widetilde{\textbf{v}}), (L_{\widetilde{\textbf{v}}}-zI)^{-1} u_{21}(\cdot ,z^*;\widetilde{\textbf{v}})\right\rangle_{\widetilde{\textbf{v}}}\\
	\left\langle u_{22}(\cdot ,z;\widetilde{\textbf{v}}), (L_{\widetilde{\textbf{v}}}-zI)^{-1} u_{11}(\cdot ,z^*;\widetilde{\textbf{v}})\right\rangle_{\widetilde{\textbf{v}}} & \left\langle u_{22}(\cdot ,z;\widetilde{\textbf{v}}), (L_{\widetilde{\textbf{v}}}-zI)^{-1} u_{21}(\cdot ,z^*;\widetilde{\textbf{v}})\right\rangle_{\widetilde{\textbf{v}}}
	\end{pmatrix},\\
	&U_1(z;\widetilde{\textbf{v}}):=\begin{pmatrix}
	\left\langle u_{11}(\cdot ,z^*;\widetilde{\textbf{v}}),  u_{11}(\cdot ,z^*;\widetilde{\textbf{v}})\right\rangle_{\widetilde{\textbf{v}}} & \left\langle u_{11}(\cdot ,z^*;\widetilde{\textbf{v}}),  u_{21}(\cdot ,z^*;\widetilde{\textbf{v}})\right\rangle_{\widetilde{\textbf{v}}}\\
	\left\langle u_{21}(\cdot ,z^*;\widetilde{\textbf{v}}),  u_{11}(\cdot ,z^*;\widetilde{\textbf{v}})\right\rangle_{\widetilde{\textbf{v}}} & \left\langle u_{21}(\cdot ,z^*;\widetilde{\textbf{v}}),  u_{21}(\cdot ,z^*;\widetilde{\textbf{v}})\right\rangle_{\widetilde{\textbf{v}}}
	\end{pmatrix},\\
	&M^+(z;\widetilde{\textbf{v}}):=\begin{pmatrix}
	m_{11}^+(z;\widetilde{\textbf{v}}) & m_{12}^+(z;\widetilde{\textbf{v}})\\
	m_{21}^+(z;\widetilde{\textbf{v}}) & m_{22}^+(z;\widetilde{\textbf{v}})
	\end{pmatrix},\\
	&U_2(z;\widetilde{\textbf{v}}):=\begin{pmatrix}
	\left\langle u_{12}(\cdot ,z;\widetilde{\textbf{v}}),  u_{12}(\cdot ,z;\widetilde{\textbf{v}})\right\rangle_{\widetilde{\textbf{v}}} & \left\langle u_{12}(\cdot ,z;\widetilde{\textbf{v}}),  u_{22}(\cdot ,z;\widetilde{\textbf{v}})\right\rangle_{\widetilde{\textbf{v}}}\\
	\left\langle u_{22}(\cdot ,z;\widetilde{\textbf{v}}),  u_{12}(\cdot ,z;\widetilde{\textbf{v}})\right\rangle_{\widetilde{\textbf{v}}} & \left\langle u_{22}(\cdot ,z;\widetilde{\textbf{v}}),  u_{22}(\cdot ,z;\widetilde{\textbf{v}})\right\rangle_{\widetilde{\textbf{v}}}
	\end{pmatrix}.
	\end{split}
	\end{equation*}
Since $U_1(z;\widetilde{\textbf{v}})$ is the Gram matrix of the linearly independent solutions $u_{1}(\cdot ,z^*;\widetilde{\textbf{v}})$ and $u_{2}(\cdot ,z^*;\widetilde{\textbf{v}})$, it is non-singular. The same holds for $U_2(z;\widetilde{\textbf{v}})$. We then have
	\begin{equation}\label{M+}
	M^+(z;\widetilde{\textbf{v}})=U_2(z;\widetilde{\textbf{v}})^{-1}U(z;\widetilde{\textbf{v}})U_1(z;\widetilde{\textbf{v}})^{-1}.
	\end{equation}
Since
	\[
	\|\widetilde{\bf{v}}_n\|_{\infty}=\|\textbf{v}\|_{\infty}<\infty,\qquad n\in\mathbb{N}_0,
	\]
and 
	\[
	\|\widetilde{\bf{v}}_n-\widetilde{\bf{v}}_0\|_{\infty}\rightarrow 0,\qquad \text{as $n\rightarrow \infty$,}
	\]
by the initial value theory of ODEs, we have
	\begin{equation}\label{sol-conv}
	\begin{pmatrix}
	u_i(x,z;\widetilde{\bf{v}}_n)\\
	p_n(x)u_i^{\prime}(x,z;\widetilde{\bf{v}}_n)
	\end{pmatrix}\rightarrow\begin{pmatrix}
	u_i(x,z;\widetilde{\bf{v}}_0)\\
	p_0(x)u_i^{\prime}(x,z;\widetilde{\bf{v}}_0)
	\end{pmatrix},\qquad \text{as $n\rightarrow \infty$, for~} i=1,2,
	\end{equation}
uniformly on any finite interval of $x \in \mathbb{R}$. It follows that 
	\[
	U_i(z;\widetilde{\bf{v}}_n)\rightarrow U_i(z;\widetilde{\bf{v}}_0), \qquad \text{as $n\rightarrow \infty$, for~} i=1,2.
	\]
Combining with the generalized norm resolvent convergence \x{nrs}, we have 
	\[
	U(z;\widetilde{\bf{v}}_n)\rightarrow U(z;\widetilde{\bf{v}}_0), \qquad \text{as $n\rightarrow \infty$},
	\]
and then
	\[
	M^+(z;\widetilde{\bf{v}}_n)\rightarrow M^+(z;\widetilde{\bf{v}}_0)\qquad \text{as $n\rightarrow \infty$},
	\]
where \x{M+} is used. The same holds for $m_{jk}^-(z;\widetilde{\bf{v}}_n)$. Thus we obtain the desired result. 

\fbox{{\rm {\romannumeral2})}}\ : Combining {\rm i)} with \x{green2} and \x{sol-conv}, we have the desired result.

\fbox{{\rm {\romannumeral3})}}\ : Take $x=y$ and consider $\frac{\mathrm{d}}{\mathrm{d}x}G(x,x,z;\widetilde{\bf{v}})$. By \x{green2}, we have
	\begin{equation*}
	\begin{split}
	&\frac{\mathrm{d}}{\mathrm{d}x}G(x,x,z;\widetilde{\textbf{v}})=\sum_{j,k=1}^{2}m_{jk}^+(z;\widetilde{\textbf{v}})(u_j(x,z;\widetilde{\textbf{v}})u_k(x,z;\widetilde{\textbf{v}}))^\prime\\
	&=\sum_{j,k=1}^{2}m_{jk}^+(z;\widetilde{\textbf{v}})\frac{1}{\widetilde{p}(x)}\bigg(\widetilde{p}(x)u_j^\prime(x,z;\widetilde{\textbf{v}})u_k(x,z;\widetilde{\textbf{v}})+u_j(x,z;\widetilde{\textbf{v}})\widetilde{p}(x)u_k^\prime(x,z;\widetilde{\textbf{v}})\bigg).
	\end{split}
	\end{equation*}
For $z\in\mathbb{C}\setminus\sigma(L_{{\bf{v}}})$, it follows from {\rm \romannumeral1)} and \x{sol-conv} that
	\begin{equation*}
	\frac{\mathrm{d}}{\mathrm{d}x}G(x,x,z;\widetilde{\bf{v}}_n)\rightarrow \frac{\mathrm{d}}{\mathrm{d}x}G(x,x,z;\widetilde{\bf{v}}_0),\qquad \text{as $n\rightarrow \infty$},
	\end{equation*}
uniformly on any finite interval of $x \in \mathbb{R}$. The proof is complete. 
\end{proof} 

Based on Lemma \ref{mgg'-con}, we have the following result that play a fundamental role in the proof of gap labelling theorem.

\begin{lem}\label{gg'ap}
For $z\in\mathbb{C}\setminus\sigma(L_{\bf{v}})$, we have
	\[
	G(x,x,z;{\bf{v}}),\quad \frac{\mathrm{d}}{\mathrm{d}x}G(x,x,z;{\bf{v}})\;\in \mathcal{AP}_{\mathcal{M}_{{\bf{v}}}}(\mathbb{R},\mathbb{C}).
	\]
\end{lem}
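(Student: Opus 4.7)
\medskip

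\noindent\textbf{Proof plan.} The strategy is to realize both $G(x,x,z;\textbf{v})$ and $\frac{\mathrm{d}}{\mathrm{d}x}G(x,x,z;\textbf{v})$ as pullbacks, under the orbit map $x \mapsto \textbf{v}\cdot x$, of continuous scalar functions on the compact hull $\mathrm{E}(\textbf{v})$, and then to combine compactness of $\mathrm{E}(\textbf{v})$ with the isometry property \eqref{sft-is} and the characterization in Lemma \ref{mv-eq}. Concretely, define
\begin{equation*}
\Phi_z(\widetilde{\textbf{v}}):=G(0,0,z;\widetilde{\textbf{v}}), \qquad \Psi_z(\widetilde{\textbf{v}}):=\left.\frac{\mathrm{d}}{\mathrm{d}x}G(x,x,z;\widetilde{\textbf{v}})\right|_{x=0},
\end{equation*}
for $\widetilde{\textbf{v}} \in \mathrm{E}(\textbf{v})$. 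By Lemma \ref{spe-inv}, $z \in \mathbb{C}\setminus\sigma(L_{\widetilde{\textbf{v}}})$ for every $\widetilde{\textbf{v}}\in \mathrm{E}(\textbf{v})$, so both $\Phi_z$ and $\Psi_z$ are well-defined. Moreover, applying Lemma \ref{sft-g} {\rm \romannumeral2)} with $t=x$ and $y=0$, one has $G(x,x,z;\textbf{v})=\Phi_z(\textbf{v}\cdot x)$ and, differentiating,\ $\frac{\mathrm{d}}{\mathrm{d}x}G(x,x,z;\textbf{v})=\Psi_z(\textbf{v}\cdot x)$.

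\medskip

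\noindent The core step is to establish the continuity of $\Phi_z$ and $\Psi_z$ on $\mathrm{E}(\textbf{v})$. Take $\widetilde{\textbf{v}}_n \to \widetilde{\textbf{v}}_0$ in $\mathrm{E}(\textbf{v})$ with respect to $\|\cdot\|_\infty$. The results from \cite{TWXZ} (used in Lemma \ref{spe-inv}) yield the generalized norm resolvent convergence $L_{\widetilde{\textbf{v}}_n}\to L_{\widetilde{\textbf{v}}_0}$; combining this with Lemma \ref{mgg'-con} {\rm \romannumeral2)} and {\rm \romannumeral3)} (evaluated at $x=y=0$) gives
\begin{equation*}
\Phi_z(\widetilde{\textbf{v}}_n)\to \Phi_z(\widetilde{\textbf{v}}_0), \qquad \Psi_z(\widetilde{\textbf{v}}_n) \to \Psi_z(\widetilde{\textbf{v}}_0),
\end{equation*}
as $n\to\infty$. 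Hence $\Phi_z,\Psi_z \in \mathcal{C}(\mathrm{E}(\textbf{v}),\mathbb{C})$, and by the compactness of $\mathrm{E}(\textbf{v})$ established in Lemma \ref{E-cpt}, they are \emph{uniformly} continuous.

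\medskip

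\noindent To conclude, let $\{t_n\}\subset\mathbb{R}$ be any sequence such that $\frac{1}{p}\cdot t_n$, $q\cdot t_n$ and $w\cdot t_n$ all converge uniformly on $\mathbb{R}$, equivalently $\textbf{v}\cdot t_n \to \widetilde{\textbf{v}}_\infty$ in $\mathrm{E}(\textbf{v})$. By the isometry \eqref{sft-is},
\begin{equation*}
\|(\textbf{v}\cdot t_n)\cdot x - \widetilde{\textbf{v}}_\infty\cdot x\|_\infty = \|\textbf{v}\cdot t_n - \widetilde{\textbf{v}}_\infty\|_\infty \longrightarrow 0
\end{equation*}
uniformly in $x\in\mathbb{R}$, so uniform continuity of $\Phi_z$ yields
\begin{equation*}
G(x+t_n,x+t_n,z;\textbf{v}) = \Phi_z((\textbf{v}\cdot t_n)\cdot x) \;\longrightarrow\; \Phi_z(\widetilde{\textbf{v}}_\infty\cdot x)
\end{equation*}
uniformly in $x\in\mathbb{R}$, and identically for $\Psi_z$ in place of $\Phi_z$. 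By Lemma \ref{mv-eq}, this proves $G(x,x,z;\textbf{v})$ and $\frac{\mathrm{d}}{\mathrm{d}x}G(x,x,z;\textbf{v})$ lie in $\mathcal{AP}_{\mathcal{M}_{\textbf{v}}}(\mathbb{R},\mathbb{C})$.

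\medskip

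\noindent\textbf{Main obstacle.} The only non-routine ingredient is the transfer of the coefficient-level convergence $\widetilde{\textbf{v}}_n \to \widetilde{\textbf{v}}_0$ in $\|\cdot\|_\infty$ to generalized norm resolvent convergence of the corresponding self-adjoint operators, which is needed to apply Lemma \ref{mgg'-con}; this is what forces us to invoke \cite{TWXZ} and is the reason why the Hilbert spaces $\mathcal{L}^2(\mathbb{R},\widetilde{w}\,\mathrm{d}x)$ (varying with $\widetilde{\textbf{v}}$) had to be bridged by the identifying maps $J_n$ in \eqref{nrs}. Everything else reduces to continuous dependence of ODE solutions on coefficients plus compactness of the hull.
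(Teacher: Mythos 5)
Your proof is correct and rests on the same key ingredients as the paper's: the translation covariance from Lemma \ref{sft-g}, the continuity from Lemma \ref{mgg'-con}, the compactness of the hull from Lemma \ref{E-cpt}, and the module characterization from Lemma \ref{mv-eq}. The difference is in how the uniform convergence over all $x\in\R$ is obtained: the paper does this by contradiction, extracting from an alleged counterexample a subsequence $x_{n_j}$ with $\widetilde{\bf{v}}\cdot x_{n_j}$ convergent and then invoking Lemma \ref{mgg'-con} {\rm \romannumeral2)} at $x=0$; you instead make the pullback structure $G(x,x,z;\textbf{v})=\Phi_z(\textbf{v}\cdot x)$ explicit, observe that $\Phi_z$ is uniformly continuous on the compact hull, and combine this with the isometry property \eqref{sft-is} to pass from convergence in $\mathrm{E}(\bf{v})$ to uniform convergence on the real line in a single direct step. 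Your version is cleaner and more conceptual (it recovers the general principle that a continuous function on the hull pulls back to an almost periodic function with the right frequency module, cf.\ Lemma \ref{lm-equi} {\rm \romannumeral3)}), whereas the paper's argument works at the level of the Green's function itself without isolating the observable $\Phi_z$. Both proofs implicitly rely on the fact, imported from \cite{TWXZ}, that $\|\cdot\|_\infty$-convergence $\widetilde{\textbf{v}}_n\to\widetilde{\textbf{v}}_0$ in $\mathrm{E}(\bf{v})$ implies generalized norm resolvent convergence of the operators, which is needed to trigger Lemma \ref{mgg'-con}; you flag this explicitly as the only nonroutine ingredient, which is an accurate assessment.
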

\begin{proof}
Consider any sequence $\{t_n\}_{n \in \mathbb{N}}\subseteq \mathbb{R}$ such that 	
	\begin{equation} \label{gg'ap1}
	{\bf{v}}\cdot t_n\rightarrow \widetilde{{\bf{v}}}\in \mathrm{E}({\bf{v}}).
	\end{equation}
For any $z\in\mathbb{C}\setminus\sigma(L_{\bf{v}})$, using Lemma \ref{sft-g} {\rm \romannumeral2)} and Lemma \ref{mgg'-con}  {\rm \romannumeral2)} we have
$$G(x+t_n,x+t_n,z;\textbf{v})=G(x,x,z;\textbf{v}\cdot t_n)\rightarrow G(x,x,z;\widetilde{{\bf{v}}}),$$
uniformly on any finite interval of $x\in\mathbb{R}$.

We claim that as $n\rightarrow\infty$, $G(x+t_n,x+t_n,z;\textbf{v})$ converges uniformly for all $x\in\mathbb{R}$. If not,  then there exist two sub-sequences $\left\lbrace k_{1n}\right\rbrace \subset \left\lbrace t_n\right\rbrace$ and $\left\lbrace k_{2n}\right\rbrace \subset \left\lbrace t_n\right\rbrace$ and a sequence $\left\lbrace x_n\right\rbrace \subset \mathbb{R}$ such that 
	\begin{equation}\label{gg'ap2}
	\left|G(x_n+k_{1n},x_n+k_{1n},z;\textbf{v})-G(x_n+k_{2n},x_n+k_{2n},z;\textbf{v})\right| \geq \delta_0,\quad\text{for all $n\in\mathbb{N}$},
	\end{equation}
where $\delta_0>0$.
As $\widetilde{\textbf{v}} \in \mathrm{E}(\textbf{v})$, there exists a sub-sequence $\left\lbrace x_{n_j}\right\rbrace \subseteq\left\lbrace x_n\right\rbrace$  such that
	\[
	 \widetilde{\textbf{v}}\cdot x_{n_j}\to \widetilde{\widetilde{\textbf{v}}}\in \mathrm{E}(\textbf{v}),\qquad\text{as $j\rightarrow\infty$}.
	\]
This implies from \x{gg'ap1} that $\textbf{v}\cdot(x_{n_j}+k_{1n_j})\rightarrow\widetilde{\widetilde{\textbf{v}}}$ and $\textbf{v}\cdot(x_{n_j}+k_{2n_j})\rightarrow\widetilde{\widetilde{\textbf{v}}}$, simultaneously as $j\rightarrow\infty$. Note that
	\begin{equation*}
	\begin{split}
	&\lim\limits_{j\rightarrow\infty}G(x+x_{n_j}+k_{1n_j},x+x_{n_j}+k_{1n_j},z;\textbf{v})=G(x,x,z;\widetilde{\widetilde{\textbf{v}}}),\\
	&\lim\limits_{j\rightarrow\infty}G(x+x_{n_j}+k_{2n_j},x+x_{n_j}+k_{2n_j},z;\textbf{v})=G(x,x,z;\widetilde{\widetilde{\textbf{v}}}),
	\end{split}
	\end{equation*}
uniformly on any finite interval of $x \in \mathbb{R}$. If $x=0$, this is contradict with \x{gg'ap2}. 
Hence the proof of the claim is complete. By Lemma \ref{mv-eq}, we conclude that $G(x,x,z;\textbf{v})\in\mathcal{AP}_{\mathcal{M}_{{\bf{v}}}}(\mathbb{R},\mathbb{C})$. 

The argument for $\frac{\mathrm{d}}{\mathrm{d}x}G(x,x,z;{\bf{v}}) \in \mathcal{AP}_{\mathcal{M}_{{\bf{v}}}}(\mathbb{R},\mathbb{C})$ is analogous to that for $G(x,x,z;\textbf{v}) \in \mathcal{AP}_{\mathcal{M}_{{\bf{v}}}}(\mathbb{R},\mathbb{C})$. The proof is complete. \end{proof}

\iffalse
\begin{rem}\label{gg'v}
	{\rm
		Similar as in  {\rm Lemma \ref{gg'ap}}, using  {\rm Lemma  \ref{sp-inv} {\rm \romannumeral2)}} and {\rm Lemma \ref{atg} {\rm \romannumeral2)}}, one has for all $z\in\mathbb{C}\setminus\sigma(L_{\bf{v}})$ and $\widetilde{{\bf{v}}}\in \mathrm{E}({\bf{v}})$, $$G(x,x,z;\widetilde{{\bf{v}}}),\quad \frac{\mathrm{d}}{\mathrm{d}x}G(x,x,z;\widetilde{{\bf{v}}})\in \mathcal{AP}_{\mathcal{M}_{\bf{v}}}(\mathbb{R},\mathbb{C}).$$
	}
\end{rem}
\fi 

\section{Rotation number}\label{sec4}

\setcounter{equation}{0}

\subsection{Pr\"{u}fer angle}

Let $\textbf{v}$ in \x{v} be fixed. As in Subsection \ref{ss-slh}, we consider a family of operators $\tau_{\widetilde{\textbf{v}}}$ in \x{sl-ex} as $\widetilde{\textbf{v}}$ varies over $\mathrm{E}(\textbf{v})$. Introduce the so-called \textsl{Pr\"{u}fer transformation} as
	\begin{equation} \label{pufer}
	\widetilde{p}(x)\phi'(x)+ \mathrm{i}\, \phi(x)= r(x)\,\mathrm{e}^{\mathrm{i}\, \theta(x)},
	\end{equation}
where $\phi(x)$ is any non-trivial solution of 
\begin{equation} \label{tau-lam}
	\tau_{\widetilde{\textbf{v}}}\phi=\lambda\phi, \qquad \lambda \in \R.
\end{equation}
Then the Pr\"{u}fer angle $\theta(x)$ satisfies the differential equation
	\begin{equation}\label{ftheta}
	\theta^{\prime}(x)=\frac{1}{\widetilde{p}(x)}\cos^2\theta(x)+\big(\lambda \widetilde{w}(x)-\widetilde{q}(x)\big) \sin^2\theta(x).
	\end{equation}

The following basic properties of the solution $\theta(x)$ of (\ref{ftheta}) are easily deduced from the classical ODE theory.

\begin{lem}\label{exist}
	For any $\mathtt{\Theta}\in\mathbb{R}$, there exists a unique solution
	of {\rm (\ref{ftheta})} with $\theta_\lambda(0,\mathtt{\Theta};\widetilde{\bf{v}})=\mathtt{\Theta}$. Moreover, the solution is continuously defined for all $x\in\mathbb{R}$. We denote it by $\theta(x):=\theta_\lambda(x,\mathtt{\Theta};\widetilde{\bf{v}})$ for $x\in\mathbb{R}$.
\end{lem}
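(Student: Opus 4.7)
The statement is essentially an application of the classical Picard--Lindel\"of theorem followed by a non-blow-up argument, so my plan is to verify that the right-hand side of \eqref{ftheta} meets the standard hypotheses and then rule out finite-time blow-up using a uniform bound.

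First, I would rewrite \eqref{ftheta} as $\theta'(x)=F(x,\theta(x))$ with
$$
F(x,\theta):=\frac{1}{\widetilde p(x)}\cos^2\theta+\bigl(\lambda\widetilde w(x)-\widetilde q(x)\bigr)\sin^2\theta.
$$
Since $\widetilde{\bf v}\in \mathrm{E}({\bf v})\subset \mathcal{AP}_*(\mathbb{R},\mathbb{R}^3)$, Lemma \ref{lm-1/f} \romannumeral1) together with Remark \ref{re-1/f} \romannumeral1) yields a uniform positive lower bound for $\widetilde p$, so that $\tfrac{1}{\widetilde p}\in \mathcal{AP}_+(\mathbb{R},\mathbb{R})$. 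In particular $\widetilde p^{-1}$, $\widetilde q$ and $\widetilde w$ are all continuous and bounded on $\mathbb{R}$. Thus $x\mapsto F(x,\theta)$ is continuous, and $\theta\mapsto F(x,\theta)$ is smooth with $|\partial_\theta F(x,\theta)|$ bounded uniformly in $(x,\theta)$ by a constant $M$ depending only on $\lambda$, $\|\widetilde p^{-1}\|_\infty$, $\|\widetilde q\|_\infty$ and $\|\widetilde w\|_\infty$. Consequently $F$ is globally Lipschitz in $\theta$ with a constant independent of $x$.

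With these properties the Picard--Lindel\"of theorem supplies a unique local solution with initial condition $\theta(0)=\mathtt{\Theta}$, and the uniform Lipschitz estimate in $\theta$ allows the usual gluing argument to extend uniqueness to the maximal interval of existence.

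Finally, global existence would follow from the pointwise bound $|F(x,\theta)|\le M'$ for all $(x,\theta)$, where $M':=\|\widetilde p^{-1}\|_\infty+(|\lambda|\|\widetilde w\|_\infty+\|\widetilde q\|_\infty)$, which in turn gives $|\theta(x)-\mathtt{\Theta}|\le M'|x|$ on the maximal interval of existence. Hence the solution cannot escape to infinity in finite time, so it extends continuously to all of $\mathbb{R}$. The only place where anything might go wrong is the need for $\widetilde p^{-1}$ to be uniformly bounded on the whole line; this is precisely what the space $\mathcal{AP}_+(\mathbb{R},\mathbb{R})$ was introduced in \eqref{ap+} to guarantee, and without that positivity-on-the-hull assumption even the local theory could fail at points where $\widetilde p$ vanishes.
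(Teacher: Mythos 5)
Your proof is correct and is exactly the standard argument the paper has in mind when it says these properties are ``easily deduced from the classical ODE theory''; the paper itself gives no explicit proof of Lemma \ref{exist}. You also correctly pinpoint that the uniform lower bound on $\widetilde p$ from Lemma \ref{lm-1/f} \romannumeral1) (and Remark \ref{re-1/f} \romannumeral1) for elements of the hull) is precisely what makes the right-hand side of (\ref{ftheta}) bounded and globally Lipschitz in $\theta$, which is the crux of the matter.
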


It follows from \x{v} and Lemma \ref{lm-1/f} \romannumeral1) that
\begin{lem}\label{th-la}
	When $x \in \R_+$, $\mathtt{\Theta}\in\mathbb{R}$ and $\widetilde{\bf{v}}\in \mathrm{E}(\bf{v})$ are fixed, $\theta_\lambda(x,\mathtt{\Theta};\widetilde{\bf{v}})$ is strictly increasing and continuous with respect to\/ $\lambda$.
\end{lem}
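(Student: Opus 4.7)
The plan is to establish the two assertions—continuity and strict monotonicity of $\theta_\lambda(x,\mathtt{\Theta};\widetilde{\bf{v}})$ in $\lambda$—by applying standard smooth dependence of ODE solutions on parameters to the Pr\"ufer equation (\ref{ftheta}), together with one transversality observation at the zeros of $\sin\theta_\lambda$.

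For continuity, I would observe that the right-hand side
\[
F(x,\theta,\lambda) := \frac{1}{\widetilde p(x)}\cos^2\theta + \bigl(\lambda\widetilde w(x)-\widetilde q(x)\bigr)\sin^2\theta
\]
is jointly continuous in $(x,\theta,\lambda)$ and globally Lipschitz in $\theta$ uniformly on bounded $\lambda$-sets, since by Lemma \ref{lm-1/f} and Remark \ref{re-1/f} the coefficients $\widetilde p,\widetilde q,\widetilde w$ and $1/\widetilde p$ are uniformly bounded on $\R$. Classical continuous dependence on parameters then delivers continuity of $\lambda\mapsto \theta_\lambda(x,\mathtt{\Theta};\widetilde{\bf{v}})$, uniformly for $x$ in any compact subset of $\R$.

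For strict monotonicity, smooth dependence also yields that $\eta_\lambda(x) := \partial_\lambda\theta_\lambda(x,\mathtt{\Theta};\widetilde{\bf{v}})$ is well-defined and $C^1$ in $x$. Differentiating (\ref{ftheta}) in $\lambda$ produces the linear inhomogeneous first-order equation
\[
\eta_\lambda'(x) = A(x)\,\eta_\lambda(x) + \widetilde w(x)\sin^2\theta_\lambda(x), \qquad \eta_\lambda(0)=0,
\]
where $A(x) = \bigl[\lambda\widetilde w(x)-\widetilde q(x)-\tfrac{1}{\widetilde p(x)}\bigr]\sin\bigl(2\theta_\lambda(x)\bigr)$. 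The variation-of-constants formula gives
\[
\eta_\lambda(x) = \int_0^x \exp\!\left(\int_s^x A(t)\,\mathrm{d}t\right)\widetilde w(s)\sin^2\theta_\lambda(s)\,\mathrm{d}s,
\]
so $\eta_\lambda(x)\ge 0$ for $x\ge 0$ by positivity of $\widetilde w$ (Lemma \ref{lm-1/f}).

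To upgrade this to \emph{strict} positivity for $x>0$, I would argue that the zero set of $\sin\theta_\lambda$ is discrete. Indeed, if $\theta_\lambda(s_0)=k\pi$ for some integer $k$, then directly from (\ref{ftheta}) one has $\theta_\lambda'(s_0) = 1/\widetilde p(s_0) > 0$; thus $\theta_\lambda$ crosses $k\pi$ transversally and $\sin\theta_\lambda$ has only isolated zeros. Consequently $\widetilde w(s)\sin^2\theta_\lambda(s)>0$ on an open dense subset of $(0,x)$, forcing $\eta_\lambda(x)>0$, whence $\lambda\mapsto \theta_\lambda(x,\mathtt{\Theta};\widetilde{\bf{v}})$ is strictly increasing on every fixed $x\in\R_+$. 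The one point requiring care is the rigorous use of $\partial_\lambda$ of an ODE solution, but this is a textbook consequence of the Lipschitz/$C^1$ structure of $F$; beyond that, the argument is routine.
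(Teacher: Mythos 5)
Your proof is correct and uses exactly the ingredients the paper implicitly invokes (the paper states the lemma as a consequence of the positivity guaranteed by $(*)$ and Lemma \ref{lm-1/f} \romannumeral1) without spelling out the argument). Differentiating the Pr\"ufer equation \eqref{ftheta} in $\lambda$, integrating by variation of constants, using $\widetilde w > 0$ to get $\eta_\lambda \ge 0$, and then upgrading to strict positivity via the transversality $\theta_\lambda' = 1/\widetilde p > 0$ at zeros of $\sin\theta_\lambda$ is the standard route, and your computation of the coefficient $A(x)$ is right.
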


We now state some results about $\theta_\lambda(x,\mathtt{\Theta};\widetilde{\bf{v}})$ that we need, which are not surprising. For the sake of completeness, we nevertheless decide to sketch their proofs.

\begin{lem}\label{lm-theta}  Let $\lambda \in \R$ be fixed.
\begin{itemize}
	\item[{\rm \romannumeral1):}] When $\mathtt{\Theta}\in\mathbb{R}$, $k\in\mathbb{Z}$ and $\widetilde{\bf{v}}\in \mathrm{E}(\bf{v})$ are fixed, one has
		\[
		\theta_\lambda(x, \mathtt{\Theta}+2k\pi; \widetilde{\bf{v}})=\theta_\lambda(x, \mathtt{\Theta}; \widetilde{\bf{v}})+2k\pi\qquad \text{for all~}x\in\mathbb{R}.
		\]
	\item[{\rm \romannumeral2):}] When $\mathtt{\Theta}\in\mathbb{R}$ and $\widetilde{\bf{v}}\in \mathrm{E}(\bf{v})$ are fixed, one has
		\[
		\theta_\lambda(x_1+ x_2, \mathtt{\Theta}; \widetilde{\bf{v}})=\theta_\lambda(x_1,\theta_\lambda(x_2, \mathtt{\Theta}; \widetilde{\bf{v}});\widetilde{\bf{v}}\cdot x_2)\qquad \mbox{for all~} x_1, x_2 \in\mathbb{R}.
		\]
	\item[{\rm \romannumeral3):}] When $x_0 \in\mathbb{R}$ is fixed, $\theta_\lambda(x_0, \mathtt{\Theta}; \widetilde{\bf{v}}):\mathbb{R}\times\mathrm{E}(\bf{v})\rightarrow \mathbb{R}$ is Lipschitz continuous.
	\item[{\rm \romannumeral4):}] When $x \in\mathbb{R}$ and $\widetilde{\bf{v}}\in E(\bf{v})$ are fixed, $\theta_\lambda(x,\mathtt{\Theta};\widetilde{\bf{v}}):\mathbb{R}\rightarrow \mathbb{R}$ is a strictly increasing self-homeomorphism.
	\end{itemize}
\end{lem}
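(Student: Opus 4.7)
The plan is to treat all four statements as consequences of the single ODE
\[
\theta'(x)=F(x,\theta(x);\widetilde{\bf{v}}),\qquad F(x,\theta;\widetilde{\bf{v}}):=\frac{1}{\widetilde p(x)}\cos^{2}\theta+\bigl(\lambda\widetilde w(x)-\widetilde q(x)\bigr)\sin^{2}\theta,
\]
together with classical ODE tools (uniqueness, continuous dependence, Gronwall) and the structural properties of the hull already established in Section~\ref{sec2} and Section~\ref{sec-sl}.

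For \romannumeral1), the key observation is that $F(x,\theta;\widetilde{\bf{v}})$ is $\pi$-periodic in $\theta$, hence also $2k\pi$-periodic; thus the shifted curve $x\mapsto\theta_\lambda(x,\mathtt{\Theta};\widetilde{\bf{v}})+2k\pi$ solves the same equation with initial datum $\mathtt{\Theta}+2k\pi$, and Lemma~\ref{exist} (uniqueness) forces it to coincide with $\theta_\lambda(x,\mathtt{\Theta}+2k\pi;\widetilde{\bf{v}})$. For \romannumeral2), set $\eta(x):=\theta_\lambda(x+x_{2},\mathtt{\Theta};\widetilde{\bf{v}})$; the chain rule and the shift definition \x{sft-v} give $\eta'(x)=F(x,\eta(x);\widetilde{\bf{v}}\cdot x_{2})$ with $\eta(0)=\theta_\lambda(x_{2},\mathtt{\Theta};\widetilde{\bf{v}})$, so uniqueness again identifies $\eta(x_{1})$ with the right-hand side of \romannumeral2).

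For \romannumeral3), note that Lemma~\ref{lm-1/f}\romannumeral1) and Remark~\ref{re-1/f}\romannumeral1) together with the compactness of $\mathrm{E}({\bf{v}})$ (Lemma~\ref{E-cpt}) yield a uniform bound $M:=\sup_{\widetilde{\bf{v}}\in \mathrm{E}({\bf{v}})}\|\widetilde{\bf{v}}\|_{\infty}<\infty$ and a uniform positive lower bound on $\widetilde p,\widetilde w$. Consequently
\[
|\partial_{\theta}F(x,\theta;\widetilde{\bf{v}})|\le L,\qquad |F(x,\theta;\widetilde{\bf{v}}_{1})-F(x,\theta;\widetilde{\bf{v}}_{2})|\le C\,\|\widetilde{\bf{v}}_{1}-\widetilde{\bf{v}}_{2}\|_{\infty},
\]
with $L,C$ depending only on $\lambda$ and $M$. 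Applying Gronwall to the difference of two solutions then yields
\[
|\theta_\lambda(x_{0},\mathtt{\Theta}_{1};\widetilde{\bf{v}}_{1})-\theta_\lambda(x_{0},\mathtt{\Theta}_{2};\widetilde{\bf{v}}_{2})|\le\mathrm{e}^{L|x_{0}|}|\mathtt{\Theta}_{1}-\mathtt{\Theta}_{2}|+\tfrac{C(\mathrm{e}^{L|x_{0}|}-1)}{L}\|\widetilde{\bf{v}}_{1}-\widetilde{\bf{v}}_{2}\|_{\infty},
\]
which is the joint Lipschitz estimate. This is the most technical step, and the only real obstacle: one must ensure the constants are independent of $\widetilde{\bf{v}}\in\mathrm{E}({\bf{v}})$, which is why the uniform positivity from $\mathcal{AP}_{+}$ is essential. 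Finally, \romannumeral4) combines \romannumeral1) and \romannumeral3): continuity in $\mathtt{\Theta}$ is immediate from \romannumeral3); strict monotonicity follows because two solutions of a scalar first-order ODE with distinct initial data cannot cross by uniqueness; and \romannumeral1) forces the image to be all of $\mathbb{R}$, so $\theta_\lambda(x,\cdot;\widetilde{\bf{v}})$ is a continuous strictly increasing surjection of $\mathbb{R}$, i.e.\ a self-homeomorphism.
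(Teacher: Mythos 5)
Your proof is correct and follows essentially the same route as the paper: $2\pi$-periodicity of the vector field plus uniqueness for \romannumeral1), uniqueness applied to the time-shifted equation for \romannumeral2), a Gronwall estimate for \romannumeral3), and monotonicity from non-crossing of scalar ODE solutions in \romannumeral4). The only noticeable deviation is in \romannumeral4): the paper uses \romannumeral2) to exhibit $\theta_\lambda(-x,\,\cdot\,;\widetilde{\bf{v}}\cdot x)$ as the explicit inverse and then invokes \romannumeral3) for its continuity, whereas you deduce surjectivity from the $2\pi$-equivariance of \romannumeral1) and appeal to the standard fact that a continuous strictly increasing surjection of $\R$ onto itself is automatically a self-homeomorphism --- both routes are equally valid.
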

\begin{proof}
\fbox{{\rm {\romannumeral1})}}\ :  Due to the $2\pi$-periodicity in $\theta$ of the vector field of (\ref{ftheta}) and by Lemma \ref{exist}, we have the desired result.

\fbox{{\rm {\romannumeral2})}}\ : Denote $\bar{\theta}_{1}(x):=\theta_\lambda(x,\theta_\lambda(x_2, \mathtt{\Theta}; \widetilde{\textbf{v}});\widetilde{\textbf{v}}\cdot x_2)$ and $\bar{\theta}_{2}(x):=\theta_\lambda(x+ x_2, \mathtt{\Theta}; \widetilde{\textbf{v}})$. Then both $\bar{\theta}_{1}(x)$
and $\bar{\theta}_{2}(x)$ satisfy the following initial problem
	\begin{equation*}
	\begin{cases}
		\theta^{\prime}(x)=\frac{1}{\widetilde{p}(x+x_2)}\cos^2\theta(x)+(\lambda \widetilde{w}(x+x_2)-\widetilde{q}(x+x_2))\sin^2\theta(x),\\
		\theta(0)=\theta_\lambda(x_2, \mathtt{\Theta};\widetilde{\textbf{v}}).
	\end{cases}
	\end{equation*}
Due to Lemma \ref{exist}, we conclude that $\bar{\theta}_{1}(x)=\bar{\theta}_{2}(x)$ for all $x\in\mathbb{R}$. Taking $x=x_1$, we have the
desired result.

\fbox{{\rm {\romannumeral3})}}\ : For $i=1,2$, let $\theta_{i}(x):=\theta_\lambda(x, \mathtt{\Theta}_i;\widetilde{\textbf{v}}_i)$ be the solutions of (\ref{ftheta}) with different $\widetilde{\textbf{v}}_i$ and $\mathtt{\Theta}_i$. Then they satisfy 
	\begin{equation}\label{lm-theta1}
	\theta_{i}(x)=\mathtt{\Theta}_i+\int_0^x\left( \frac{1}{\widetilde{p}_i(s)}\cos^2\theta_{i}(s)+(\lambda \widetilde{w}_i(s)-\widetilde{q}_i(s))\sin^2\theta_{ i}(s)\right)\mathrm{d}s,\quad i=1,2.
	\end{equation}
Note that
	\begin{equation} \label{lm-theta2}
	\begin{split}
		&\left(\frac{1}{\widetilde{p}_2(s)}\cos^2\theta_{2}(s)+\big(\lambda \widetilde{w}_2(s)-\widetilde{q}_2(s)\big)\sin^2\theta_{ 2}(s)\right)\\
		&-\left(\frac{1}{\widetilde{p}_1(s)}\cos^2\theta_{1}(s)+\big(\lambda \widetilde{w}_1(s)-\widetilde{q}_1(s)\big)\sin^2\theta_{1}(s)\right)\\
		=&\Big(\frac{1}{\widetilde{p}_2(s)}-\frac{1}{\widetilde{p}_1(s)}\Big)\cos^2\theta_{2}(s)+\frac{1}{\widetilde{p}_1(s)}\big(\cos^2\theta_{ 2}(s)-\cos^2\theta_{ 1}(s)\big)\\
		&+\big(\lambda \widetilde{w}_2(s)-\widetilde{q}_2(s)-\lambda \widetilde{w}_1(s)+\widetilde{q}_1(s)\big)\sin^2\theta_{2}(s) \\
		&+\big(\lambda \widetilde{w}_1(s)-\widetilde{q}_1(s)\big)\big(\sin^2\theta_{ 2}(s)-\sin^2\theta_{1}(s)\big). 
	\end{split}
	\end{equation}
When $s$ is fixed, we can regard $\theta_{1}(s)$ and $\theta_{2}(s)$ as two real numbers. By the Mean Value Theorem,
there exist $\zeta(s)$ and $\eta(s)$ which belong to the interval with endpoints $\theta_{1}(s)$ and $\theta_{2}(s)$ such that
	\begin{equation*}\label{3.5}
	\cos\theta_{2}(s)-\cos\theta_{1}(s)=-\sin\zeta(s)(\theta_{ 2}(s)-\theta_{ 1}(s)),
	\end{equation*}
	\begin{equation*}\label{3.6}
	\sin\theta_{2}(s)-\sin\theta_{1}(s)=\cos\eta(s)(\theta_{ 2}(s)-\theta_{ 1}(s)).
	\end{equation*}
Without loss of generality, we assume that $x_0>0$. Denote 
	\[
	D(x):=\theta_{2}(x)-\theta_{1}(x), \qq x \in [0,x_0],
	\]
	\[
	B(s):=\left(\frac{1}{\widetilde{p}_2(s)}-\frac{1}{\widetilde{p}_1(s)}\right)\cos^2\theta_{2}(s)+\big(\lambda \widetilde{w}_2(s)-\widetilde{q}_2(s)-\lambda \widetilde{w}_1(s)+\widetilde{q}_1(s)\big)\sin^2\theta_{2}(s),
	\]
	\[
	A(s):=-\frac{1}{\widetilde{p}_1(s)}\sin\zeta(s)\big(\cos\theta_{ 2}(s)+\cos\theta_{1}(s)\big)+\big(\lambda \widetilde{w}_1(s)-\widetilde{q}_1(s)\big)\cos\eta(s)\big(\sin\theta_{ 2}(s)+\sin\theta_{1}(s)\big),
	\]
for $s \in[0,x]$.
Combining $A(s)$ and $B(s)$ with (\ref{lm-theta1}) and (\ref{lm-theta2}), we have
	\[
	D(x)=(\mathtt{\Theta}_2-\mathtt{\Theta}_1)+\int_0^x\big(A(s)D(s)+B(s)\big) \mathrm{d}s, \qquad x \in [0,x_0].
	\]
Then
	\[
	|D(x)|\leq|\mathtt{\Theta}_2-\mathtt{\Theta}_1|+\int_0^x\big(|A(s)D(s)|+|B(s)|\big) \mathrm{d}s, \qquad x \in [0,x_0].
	\]
Denote $\disp C(x):=|\mathtt{\Theta}_2-\mathtt{\Theta}_1|+\int_0^x|B(s)| \mathrm{d}s$. Then we have
	\begin{equation*}
	\begin{split}
		C(x)\leq& |\mathtt{\Theta}_2-\mathtt{\Theta}_1|+x_0\left( \Big\|\frac{1}{\widetilde{p}_2}-\frac{1}{\widetilde{p}_1}\Big\|_{\infty}+\|\widetilde{q}_2-\widetilde{q}_1\|_{\infty}+|\lambda|\|\widetilde{w}_2-\widetilde{w}_1\|_{\infty}\right) \\
		\leq& |\mathtt{\Theta}_2-\mathtt{\Theta}_1|+3x_0\max\{1,|\lambda|\}\|\widetilde{\textbf{v}}_2-\widetilde{\textbf{v}}_1\|_{\infty},
	\end{split}
	\end{equation*}
where \x{dist-v} is used. By the generalized Gronwall inequality \cite[Corollary 6.6]{ode}, we obtain
	\begin{equation}\label{lm-theta3}
	|D(x)|\leq C(x)+\int_0^x|A(s)||C(s)| \mathrm{e}^{\int_{s}^{x}|A(t)|\mathrm{d}t} \mathrm{d}s, \qquad x \in [0,x_0].
	\end{equation}
Taking $x=x_0$ in (\ref{lm-theta3}), we have
	\begin{equation*}
	\begin{split}
		&|D(x_0)|\\
		\leq&\big(|\mathtt{\Theta}_2-\mathtt{\Theta}_1|+3x_0\max\{1,|\lambda|\}\|\widetilde{\textbf{v}}_2-\widetilde{\textbf{v}}_1\|_{\infty}\big)\\
		&\cdot \Bigg(1+2\exp\left(\disp 2\int_0^{x_0} \Big(\Big|\frac{1}{\widetilde{p}_1(t)}\Big|+|\widetilde{q}_1(t)|+|\lambda||\widetilde{w}_1(t)|\Big)\mathrm{d}t\right)\\
		&\cdot \int_0^{x_0} \Big(\Big|\frac{1}{\widetilde{p}_1(s)}\Big|+|\widetilde{q}_1(s)|+|\lambda||\widetilde{w}_1(s)|\Big)\mathrm{d}s \Bigg)\\
		\leq&\big(|\mathtt{\Theta}_2-\mathtt{\Theta}_1|+3x_0\max\{1,|\lambda|\}\|\widetilde{\textbf{v}}_2-\widetilde{\textbf{v}}_1\|_{\infty}\big)\cdot \big(1+6\mathrm{e}^{6x_0\|\widetilde{\textbf{v}}_1\|_{\infty}}x_0 \|\widetilde{\textbf{v}}_1\|_{\infty} \big),
	\end{split}
	\end{equation*}
where Lemma \ref{lm-1/f} and \x{dist-v} are used. Since $\|\widetilde{\textbf{v}}_1\|_{\infty}=\|\textbf{v}\|_{\infty}$ is fixed, we have the desired result.

\fbox{{\rm {\romannumeral4})}}\ : Due to Lemma \ref{exist}, we know that for any fixed $\widetilde{\bf{v}}\in E(\bf{v})$ and $x\in\R$, $\theta_{\lambda}(x, \mathtt{\Theta}; \widetilde{\textbf{v}}):\mathbb{R}\rightarrow \mathbb{R}$ is strictly increasing. By {\rm {\romannumeral2})}, we
have
	\[
	\theta_{\lambda}\big(x,\theta_{\lambda}(-x,\mathtt{\Theta};\widetilde{\textbf{v}}\cdot x); \widetilde{\textbf{v}}\big)=\mathtt{\Theta}=\theta_{\lambda}\big(-x,\theta_{\lambda}(x, \mathtt{\Theta}; \widetilde{\textbf{v}});\widetilde{\textbf{v}}\cdot x\big).
	\]
This implies that the inverse of $\theta_{\lambda}(x, \mathtt{\Theta}; \widetilde{\textbf{v}})$ is $\theta_{\lambda}(-x,\mathtt{\Theta};\widetilde{\textbf{v}}\cdot x)$. Combining this with {\rm {\romannumeral3})}, we complete the proof.\end{proof}

\subsection{Reduction to skew-products}

Let $\mathbb{S}_{2\pi}:=\mathbb{R}\slash 2\pi\mathbb{Z}$. Introduce the product space $\mathrm{Z}:=\mathbb{S}_{2\pi} \times \mathrm{E}(\textbf{v})$ with the distance
	\[
	d\big((\vartheta_1,\widetilde{\textbf{v}}_1),(\vartheta_2,\widetilde{\textbf{v}}_2)\big):=\max\left\lbrace |\vartheta_1-\vartheta_2|_{2\pi}, \|\widetilde{\textbf{v}}_1-\widetilde{\textbf{v}}_2\|_{\infty} \right\rbrace, 
	\]
where $(\vartheta_i,\widetilde{\textbf{v}}_i)\in\mathrm{Z},\, i=1,2$, and $|\vartheta_1-\vartheta_2|_{2\pi}:=|\vartheta_1-\vartheta_2| \mod 2\pi$. Lemma \ref{E-cpt} implies that $\mathrm{Z}$ is a compact metric space. 

For each $t\in\mathbb{R}$, the skew-product transformation $\Psi_\lambda^t$ on $\mathrm{Z}$ can be defined by
	\begin{equation}\label{skew}
	\Psi_\lambda^t(\vartheta,\widetilde{\textbf{v}}):=\big(\theta_{\lambda}(t,\mathtt{\Theta};\widetilde{\textbf{v}})\mod 2\pi,\widetilde{\textbf{v}}\cdot t \big) ,\quad (\vartheta,\widetilde{\textbf{v}})\in \mathrm{Z},
	\end{equation}
where $\mathtt{\Theta} \in \mathbb{R}$ satisfies $\vartheta=\mathtt{\Theta}\mod 2\pi$. By Lemma \ref{lm-theta} {\rm {\romannumeral1})}, $\Psi_\lambda^t$ is well-defined.
Moreover, we have
\begin{lem}\label{lm-ds}	
	$\big\lbrace\Psi_\lambda^t\big\rbrace_{t\in\mathbb{R}}$ defined by {\rm (\ref{skew})} is a continuous skew-product on the compact space $\mathrm{Z}$.
\end{lem}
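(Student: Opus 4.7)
The plan is to verify three things in turn: that $\{\Psi_\lambda^t\}_{t \in \R}$ is well-defined on $\mathrm{Z}$, that it is a one-parameter flow with the skew-product structure (i.e.\ the base projection $\pi_2(\vartheta,\widetilde{\bf v})=\widetilde{\bf v}$ intertwines $\Psi_\lambda^t$ with the already-studied shift $\widetilde{\bf v}\mapsto \widetilde{\bf v}\cdot t$), and finally that the map $(t,\vartheta,\widetilde{\bf v}) \mapsto \Psi_\lambda^t(\vartheta,\widetilde{\bf v})$ is jointly continuous on $\R \times \mathrm{Z}$.

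Well-definedness on $\mathbb{S}_{2\pi} \times \mathrm{E}(\textbf{v})$ is immediate from Lemma~\ref{lm-theta} {\rm \romannumeral1)}: if $\mathtt{\Theta}$ and $\mathtt{\Theta}+2k\pi$ both represent $\vartheta$, then their flows under (\ref{ftheta}) differ by exactly $2k\pi$ and so define the same element of $\mathbb{S}_{2\pi}$. The flow property $\Psi_\lambda^0=\mathrm{id}$ follows from the initial condition in Lemma~\ref{exist}. The composition identity $\Psi_\lambda^{t_1+t_2}=\Psi_\lambda^{t_1}\circ \Psi_\lambda^{t_2}$ follows by applying Lemma~\ref{lm-theta} {\rm \romannumeral2)} to the angular component and the group structure $\widetilde{\bf v}\cdot(t_1+t_2)=(\widetilde{\bf v}\cdot t_2)\cdot t_1$ of the base shift (see the list of properties immediately before Lemma~\ref{orbs}) to the base component. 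The skew-product structure is visible directly from the definition (\ref{skew}), since the second coordinate $\widetilde{\bf v}\cdot t$ does not depend on $\vartheta$.

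For joint continuity, fix $(t_0,\vartheta_0,\widetilde{\bf v}_0)\in \R \times \mathrm{Z}$, pick representatives $\mathtt{\Theta}_0$ of $\vartheta_0$ and $\mathtt{\Theta}$ of $\vartheta$, and estimate
\[
\big|\theta_\lambda(t,\mathtt{\Theta};\widetilde{\bf v})-\theta_\lambda(t_0,\mathtt{\Theta}_0;\widetilde{\bf v}_0)\big|_{2\pi}
\leq \big|\theta_\lambda(t,\mathtt{\Theta};\widetilde{\bf v})-\theta_\lambda(t,\mathtt{\Theta}_0;\widetilde{\bf v}_0)\big|
+\big|\theta_\lambda(t,\mathtt{\Theta}_0;\widetilde{\bf v}_0)-\theta_\lambda(t_0,\mathtt{\Theta}_0;\widetilde{\bf v}_0)\big|.
\]
The first term is controlled by Lemma~\ref{lm-theta} {\rm \romannumeral3)} for each fixed $t$ near $t_0$; because the Lipschitz constant produced there depends only on the length of the interval, on $|\lambda|$ and on $\|\textbf{v}\|_\infty$, it is uniform for $t$ in a compact neighbourhood of $t_0$. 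The second term is handled by integrating the right-hand side of (\ref{ftheta}) from $t_0$ to $t$: Lemma~\ref{lm-1/f} {\rm \romannumeral1)} together with Remark~\ref{re-1/f} {\rm \romannumeral1)} yields a bound on $1/\widetilde{p}$ that is uniform over $\mathrm{E}(\textbf{v})$, and $\widetilde{q}$, $\widetilde{w}$ are uniformly bounded on $\mathrm{E}(\textbf{v})$ as well, so $|\theta_\lambda(t,\mathtt{\Theta}_0;\widetilde{\bf v}_0)-\theta_\lambda(t_0,\mathtt{\Theta}_0;\widetilde{\bf v}_0)|\leq C|t-t_0|$ with $C$ independent of the base point. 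The base coordinate is handled by the uniform-continuity property of the shift listed before Lemma~\ref{orbs}: $\|\widetilde{\bf v}\cdot t - \widetilde{\bf v}_0 \cdot t_0\|_\infty \leq \|\widetilde{\bf v}-\widetilde{\bf v}_0\|_\infty + \|\widetilde{\bf v}_0 \cdot (t-t_0)-\widetilde{\bf v}_0\|_\infty$, and both pieces can be made small.

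The only real obstacle is extracting \emph{uniform} control in $\widetilde{\bf v}$ in each of the estimates above; this is precisely what is available because $\mathrm{E}(\textbf{v})$ is compact (Lemma~\ref{E-cpt}) and the bounds furnished by Lemmas~\ref{lm-1/f} and~\ref{lm-theta} {\rm \romannumeral3)} are phrased in terms of $\|\textbf{v}\|_\infty$ rather than the local representative. Combining the three contributions via the triangle inequality shows that $(t,\vartheta,\widetilde{\bf v})\mapsto \Psi_\lambda^t(\vartheta,\widetilde{\bf v})$ is continuous at $(t_0,\vartheta_0,\widetilde{\bf v}_0)$, completing the proof.
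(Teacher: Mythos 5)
Your proof takes essentially the same route as the paper: continuity of $\Psi_\lambda^t$ from the isometry property~(\ref{sft-is}) of the base shift together with the Lipschitz estimate of Lemma~\ref{lm-theta}~{\rm iii)}, and the cocycle identity from Lemma~\ref{lm-theta}~{\rm ii)}. You merely spell out in more detail (joint continuity in $(t,\vartheta,\widetilde{\bf v})$, well-definedness modulo $2\pi$, $\Psi_\lambda^0=\mathrm{id}$) what the paper dispatches in a single line, and all these elaborations are correct.
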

\begin{proof}
The continuity of $\Psi_\lambda^t$ can be deduced from \x{sft-is} and Lemma \ref{lm-theta} {\rm {\romannumeral3})}. We only need to prove that
	\[
	\Psi_\lambda^{t_1+t_2}=\Psi_\lambda^{t_1}\circ\Psi_\lambda^{t_2}\qquad \text{for all}\; t_1, t_2\in \mathbb{R}.
	\]
Indeed, for $(\vartheta,\widetilde{\textbf{v}})\in \mathrm{Z}$, there exists a $\mathtt{\Theta}\in\mathbb{R}$ such that $\vartheta=\mathtt{\Theta}\mod 2\pi$. Then we have
	\begin{equation*}
	\begin{split}
		\Psi_\lambda^{t_1}\circ\Psi_\lambda^{t_2}(\vartheta,\widetilde{\textbf{v}})
		&=\Psi_\lambda^{t_1}\big(\theta_{\lambda}(t_2,\mathtt{\Theta};\widetilde{\textbf{v}})\mod 2\pi, \widetilde{\textbf{v}}\cdot t_2\big)\\
		&=\left(\theta_{\lambda}\big(t_1,\theta_{\lambda}(t_2,\mathtt{\Theta};\widetilde{\textbf{v}});\widetilde{\textbf{v}}\cdot t_2\big)\mod 2\pi, \widetilde{\textbf{v}}\cdot(t_2+t_1)\right)\\
		&=\big(\theta_{\lambda}(t_1+t_2,\mathtt{\Theta};\widetilde{\textbf{v}})\mod 2\pi,\widetilde{\textbf{v}}\cdot(t_1+t_2)\big)\\
		&=\Psi_\lambda^{t_1+t_2}(\vartheta,\widetilde{\textbf{v}}),
	\end{split}
	\end{equation*}
where Lemma \ref{lm-theta} {\rm {\romannumeral2})} is used. The proof is complete.\end{proof}

Introduce an observation function $f_\lambda: \mathrm{Z} \to \R$ by
	\begin{equation} \label{df-f}
	f_\lambda(\vartheta,\widetilde{\textbf{v}}):=\frac{1}{\widetilde{p}(0)}\cos^2\vartheta+\big(\lambda \widetilde{w}(0)-\widetilde{q}(0)\big) \sin^2\vartheta.
	\end{equation}
By \x{dist-v}, it is obvious to verify that 
\begin{lem} \label{lm-f} 
	$f_\lambda(\vartheta,\widetilde{\bf{v}})$ is Lipschitz continuous on $\mathrm{Z}$.
\end{lem}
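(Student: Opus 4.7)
The plan is to prove Lipschitz continuity directly by splitting the difference into a $\vartheta$-variation term and a $\widetilde{\textbf{v}}$-variation term, then estimating each using elementary bounds together with the uniform bounds available on $\mathrm{E}(\textbf{v})$.

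First I would observe that the hull $\mathrm{E}(\textbf{v})$ is uniformly bounded: by the isometry property \eqref{sft-is} together with Definition \ref{hull}, every $\widetilde{\textbf{v}} \in \mathrm{E}(\textbf{v})$ satisfies $\|\widetilde{\textbf{v}}\|_{\infty} = \|\textbf{v}\|_{\infty}$, so in particular the three scalars $\frac{1}{\widetilde{p}(0)}$, $\widetilde{q}(0)$, $\widetilde{w}(0)$ admit a common bound $M := \|\textbf{v}\|_{\infty}$. Moreover, by Remark \ref{re-1/f} i) (the uniform version of Lemma \ref{lm-1/f} i)), there is $\delta > 0$ with $\widetilde{p}(0), \widetilde{w}(0) \geq \delta$ for every $\widetilde{\textbf{v}} \in \mathrm{E}(\textbf{v})$.

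Next, for $(\vartheta_i, \widetilde{\textbf{v}}_i) \in \mathrm{Z}$, $i=1,2$, I would apply the triangle inequality to split
\[
|f_\lambda(\vartheta_1,\widetilde{\textbf{v}}_1) - f_\lambda(\vartheta_2,\widetilde{\textbf{v}}_2)| \leq |f_\lambda(\vartheta_1,\widetilde{\textbf{v}}_1) - f_\lambda(\vartheta_2,\widetilde{\textbf{v}}_1)| + |f_\lambda(\vartheta_2,\widetilde{\textbf{v}}_1) - f_\lambda(\vartheta_2,\widetilde{\textbf{v}}_2)|.
\]
For the first term, with $\widetilde{\textbf{v}}_1$ fixed, the elementary estimates $|\cos^2\vartheta_1 - \cos^2\vartheta_2| \leq |\vartheta_1 - \vartheta_2|_{2\pi}$ and $|\sin^2\vartheta_1 - \sin^2\vartheta_2| \leq |\vartheta_1 - \vartheta_2|_{2\pi}$ (both $\cos^2, \sin^2$ are $\pi$-periodic with derivative bounded by $1$, so the Mean Value Theorem applied on a minimizing representative of the class yields the claim) combine with the bounds on $\frac{1}{\widetilde{p}_1(0)}$, $\widetilde{q}_1(0)$, $\widetilde{w}_1(0)$ to yield a bound of the form $(M + |\lambda|M + M)|\vartheta_1 - \vartheta_2|_{2\pi}$. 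For the second term, with $\vartheta_2$ fixed, the estimates $|\cos^2\vartheta_2|, |\sin^2\vartheta_2| \leq 1$ reduce the problem to bounding
\[
\Bigl|\tfrac{1}{\widetilde{p}_1(0)} - \tfrac{1}{\widetilde{p}_2(0)}\Bigr| + |\lambda|\,|\widetilde{w}_1(0) - \widetilde{w}_2(0)| + |\widetilde{q}_1(0) - \widetilde{q}_2(0)|,
\]
which by the definition \eqref{dist-v} of $\|\cdot\|_{\infty}$ on $\mathcal{AP}_*(\R, \R^3)$ is at most $(2 + |\lambda|)\|\widetilde{\textbf{v}}_1 - \widetilde{\textbf{v}}_2\|_{\infty}$.

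Combining both estimates and using the definition of the metric $d$ on $\mathrm{Z}$ will give a global Lipschitz constant of the form $C(\lambda, \|\textbf{v}\|_\infty)$, independent of the points chosen. I do not anticipate any real obstacle here: the statement is essentially a routine bilinear estimate, and the only subtlety worth flagging is the uniformity of the coefficient bounds across $\mathrm{E}(\textbf{v})$, which is precisely guaranteed by the shift-isometry \eqref{sft-is} together with Remark \ref{re-1/f} i).
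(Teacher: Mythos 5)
Your proof is correct and fills in precisely the direct verification the paper leaves implicit (the paper simply declares the claim "obvious by \eqref{dist-v}"): a triangle-inequality split into a $\vartheta$-variation term and a $\widetilde{\textbf{v}}$-variation term, with the uniform bound $\|\widetilde{\textbf{v}}\|_\infty = \|\textbf{v}\|_\infty$ on the hull supplying a global Lipschitz constant. The invocation of Remark \ref{re-1/f} i) for a lower bound $\delta$ on $\widetilde{p}(0)$ is harmless but unnecessary here, since your estimates only use the upper bound on $\tfrac{1}{\widetilde{p}(0)}$ already encoded in $\|\textbf{v}\|_\infty$.
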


Based on the above construction, we can relate the existence of the rotation number to the convergence of the following time average under the skew-product $\big\{\Psi_\lambda^t\big\}_{t \in \R}$.

\begin{lem} \label{eqs} 
For any $(\vartheta,\widetilde{\bf{v}})\in \mathrm{Z}$ and $x \in \R$, one has 
	\[
	\frac{\theta_\lambda(x,\mathtt{\Theta};\widetilde{\bf{v}})-\mathtt{\Theta}}{x}=\frac{\theta_\lambda(x,\mathtt{\Theta};\widetilde{\bf{v}})-\theta_\lambda(0,\mathtt{\Theta};\widetilde{\bf{v}})}{x}=\frac{1}{x}\int_{0}^{x}f_\lambda\big(\Psi_\lambda^s(\vartheta,\widetilde{\bf{v}})\big)\mathrm{d}s,
	\]
where $\mathtt{\Theta} \in \mathbb{R}$ satisfies $\vartheta=\mathtt{\Theta}\mod 2\pi$.
\end{lem}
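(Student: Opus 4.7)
The plan is to prove this in two short steps. The first equality is immediate: by Lemma \ref{exist}, the initial condition gives $\theta_\lambda(0,\mathtt{\Theta};\widetilde{\bf{v}})=\mathtt{\Theta}$, so the two numerators coincide.

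For the second equality, the idea is to show that the integrand $f_\lambda\big(\Psi_\lambda^s(\vartheta,\widetilde{\bf{v}})\big)$ is precisely the derivative $\frac{\mathrm{d}}{\mathrm{d}s}\theta_\lambda(s,\mathtt{\Theta};\widetilde{\bf{v}})$, after which the Fundamental Theorem of Calculus closes the argument. Concretely, I would unfold the definition \eqref{skew} to get
\[
\Psi_\lambda^s(\vartheta,\widetilde{\bf{v}})=\big(\theta_\lambda(s,\mathtt{\Theta};\widetilde{\bf{v}})\bmod 2\pi,\; \widetilde{\bf{v}}\cdot s\big),
\]
and then substitute into \eqref{df-f}. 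Using the fact that $(\widetilde{p}\cdot s)(0)=\widetilde{p}(s)$, $(\widetilde{q}\cdot s)(0)=\widetilde{q}(s)$, $(\widetilde{w}\cdot s)(0)=\widetilde{w}(s)$, together with the $2\pi$-periodicity (indeed $\pi$-periodicity) of $\cos^2$ and $\sin^2$ so that the representative chosen modulo $2\pi$ is immaterial, one obtains
\[
f_\lambda\big(\Psi_\lambda^s(\vartheta,\widetilde{\bf{v}})\big)=\frac{1}{\widetilde{p}(s)}\cos^2\theta_\lambda(s,\mathtt{\Theta};\widetilde{\bf{v}})+\big(\lambda\widetilde{w}(s)-\widetilde{q}(s)\big)\sin^2\theta_\lambda(s,\mathtt{\Theta};\widetilde{\bf{v}}).
\]
By \eqref{ftheta} this right-hand side is exactly $\theta_\lambda^{\prime}(s,\mathtt{\Theta};\widetilde{\bf{v}})$.

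Therefore
\[
\int_0^x f_\lambda\big(\Psi_\lambda^s(\vartheta,\widetilde{\bf{v}})\big)\mathrm{d}s=\int_0^x \theta_\lambda^{\prime}(s,\mathtt{\Theta};\widetilde{\bf{v}})\mathrm{d}s=\theta_\lambda(x,\mathtt{\Theta};\widetilde{\bf{v}})-\theta_\lambda(0,\mathtt{\Theta};\widetilde{\bf{v}}),
\]
and dividing by $x$ delivers the claimed identity. There is no real obstacle here; the only point that deserves care is the cosmetic one of checking that taking the representative $\theta_\lambda(s,\mathtt{\Theta};\widetilde{\bf{v}})\bmod 2\pi$ inside $f_\lambda$ does not change the value of the integrand, which is immediate from the $\pi$-periodicity of $\sin^2$ and $\cos^2$.
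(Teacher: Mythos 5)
Your proof is correct and follows essentially the same route as the paper: unfold the skew-product, use $(\widetilde{p}\cdot s)(0)=\widetilde{p}(s)$ and its analogues, observe that the integrand becomes the right-hand side of \eqref{ftheta}, and finish with the Fundamental Theorem of Calculus. Your added remark that $\pi$-periodicity of $\cos^2$ and $\sin^2$ makes the $\bmod\,2\pi$ representative immaterial is a small point of care the paper leaves implicit, but it is the same argument.
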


\begin{proof}
For $s \in \R$, by \x{skew}, \x{df-f} and \x{sft-f}, we have
	\begin{equation*}
	\begin{split}
		f_\lambda\big(\Psi_\lambda^s(\vartheta,\widetilde{\bf{v}})\big)
		&=\frac{1}{\widetilde{p}\cdot s(0)}\cos^2\theta_{\lambda}(s,\mathtt{\Theta};\widetilde{\textbf{v}})+\big(\lambda \widetilde{w}\cdot s(0)-\widetilde{q}\cdot s(0)\big) \sin^2\theta_{\lambda}(s,\mathtt{\Theta};\widetilde{\textbf{v}})\\
		&=\frac{1}{\widetilde{p}(s)}\cos^2\theta_{\lambda}(s,\mathtt{\Theta};\widetilde{\textbf{v}})+\big(\lambda \widetilde{w}(s)-\widetilde{q}(s)\big) \sin^2\theta_{\lambda}(s,\mathtt{\Theta};\widetilde{\textbf{v}}).
	\end{split}
	\end{equation*}
Combining this with \x{ftheta}, we complete the proof. \end{proof}

According to the Krylov-Bogoliubov theorem \cite[Corollary 6.9.1]{Wa82} and Lemma \ref{lm-ds}, the flow $\big\lbrace \Phi_\lambda^t\big\rbrace_{t \in \R}$ possesses at least one invariant probability measure on the compact metric space $\mathrm{Z}$, denoted by $\nu_\lambda$. In general, $\big\lbrace \Phi_\lambda^t\big\rbrace_{t \in \R}$ is not uniquely ergodic. We then establish the following relationship between any invariant measure $\nu_\lambda$ on $\mathrm{Z}$ and the Haar measure $\mu_{\mathrm{E}(\textbf{v})}$ on $\mathrm{E}(\textbf{v})$.

\begin{lem}\label{proj-me}
	Let $\Pi: \mathrm{Z} \rightarrow \mathrm{E}(\bf{v})$ be the projection.
	Then $\mu_{\mathrm{E}(\bf{v})}=\nu_\lambda\circ\Pi^{-1}$ for each invariant probability measure $\nu_\lambda$.
\end{lem}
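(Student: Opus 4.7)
The plan is to recognize that the projection $\Pi: \mathrm{Z} \to \mathrm{E}(\textbf{v})$ intertwines the skew-product flow $\{\Psi_\lambda^t\}_{t \in \R}$ on $\mathrm{Z}$ with the shift flow $\{\widetilde{\textbf{v}} \mapsto \widetilde{\textbf{v}} \cdot t\}_{t \in \R}$ on $\mathrm{E}(\textbf{v})$, and then invoke the unique ergodicity established in Lemma \ref{atg} \rmnum{4}).

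First, I would check directly from the definition \x{skew} that for all $t \in \R$ and $(\vartheta,\widetilde{\textbf{v}}) \in \mathrm{Z}$,
\[
\Pi\big(\Psi_\lambda^t(\vartheta,\widetilde{\textbf{v}})\big)=\Pi\big(\theta_\lambda(t,\mathtt{\Theta};\widetilde{\textbf{v}})\mod 2\pi,\;\widetilde{\textbf{v}}\cdot t\big)=\widetilde{\textbf{v}}\cdot t=\Pi(\vartheta,\widetilde{\textbf{v}})\cdot t,
\]
so $\Pi$ is a continuous factor map between the two flows.

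Next, the pushforward $\nu_\lambda\circ\Pi^{-1}$ is a Borel probability measure on $\mathrm{E}(\textbf{v})$ since $\Pi$ is continuous (hence Borel measurable) and $\nu_\lambda$ is a Borel probability measure on $\mathrm{Z}$. For any Borel set $B\subseteq \mathrm{E}(\textbf{v})$ and any $t\in\R$, the intertwining identity gives $\Pi^{-1}(B\cdot(-t))=(\Psi_\lambda^t)^{-1}\big(\Pi^{-1}(B)\big)$, so by $\Psi_\lambda^t$-invariance of $\nu_\lambda$,
\[
(\nu_\lambda\circ\Pi^{-1})(B\cdot(-t))=\nu_\lambda\big((\Psi_\lambda^t)^{-1}\Pi^{-1}(B)\big)=\nu_\lambda\big(\Pi^{-1}(B)\big)=(\nu_\lambda\circ\Pi^{-1})(B).
\]
Hence $\nu_\lambda\circ\Pi^{-1}$ is an invariant probability measure for the shift flow on $\mathrm{E}(\textbf{v})$.

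Finally, by Lemma \ref{atg} \rmnum{4}), the shift flow on $\mathrm{E}(\textbf{v})$ is uniquely ergodic, with the Haar measure $\mu_{\mathrm{E}(\textbf{v})}$ as its only invariant probability measure. Therefore $\nu_\lambda\circ\Pi^{-1}=\mu_{\mathrm{E}(\textbf{v})}$, which is exactly the claim. There is no substantive obstacle here; the argument is a formal application of unique ergodicity, and the only point to verify with care is the intertwining relation $\Pi\circ\Psi_\lambda^t=(\,\cdot\,)\cdot t\circ \Pi$, which follows immediately from the explicit form of $\Psi_\lambda^t$.
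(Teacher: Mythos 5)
Your proof is correct and takes essentially the same route as the paper: you observe that $\Pi$ is a factor map from $\{\Psi_\lambda^t\}$ to the shift flow, deduce that the pushforward $\nu_\lambda\circ\Pi^{-1}$ is shift-invariant via $\Pi^{-1}(B\cdot(-t))=(\Psi_\lambda^t)^{-1}\Pi^{-1}(B)$, and invoke unique ergodicity from Lemma \ref{atg} \rmnum{4}). The paper writes the same set identity as $(\Psi_\lambda^t)^{-1}(\mathbb{S}_{2\pi}\times\mathrm{B})=\mathbb{S}_{2\pi}\times\mathrm{B}\cdot(-t)$; your phrasing in terms of the intertwining relation $\Pi\circ\Psi_\lambda^t=(\cdot\; t)\circ\Pi$ is equivalent and arguably cleaner.
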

\begin{proof}
By Lemma \ref{atg} {\rm {\romannumeral4})}, we only need to verify that $\nu_\lambda\circ\Pi^{-1}$ is invariant on $\mathrm{E}(\textbf{v})$ under the shift \x{sft-v}. Let $\mathrm{B} \subseteq \mathrm{E}(\textbf{v})$ be any Borel subset, and $\mathrm{B}\cdot(-t):=\big\lbrace \widetilde{\textbf{v}}\cdot(-t): \widetilde{\textbf{v}}\in\mathrm{B}\big\rbrace$, for any $t\in\R$. By \x{skew} and Lemma \ref{lm-theta} {\rm {\romannumeral4})}, we have
	\begin{equation} \label{pro-mea1}
	\big(\Psi_\lambda^t\big)^{-1}(\mathbb{S}_{2\pi} \times \mathrm{B})=\mathbb{S}_{2\pi} \times\mathrm{B}\cdot(-t).
	\end{equation}
Since $\nu_\lambda$ is invariant, we obtain 
	\begin{align*}
	\nu_\lambda\circ\Pi^{-1}(\mathrm{B}) &=\nu_\lambda\left(\mathbb{S}_{2\pi} \times \mathrm{B}\right)=\nu_\lambda\left(\big(\Psi_\lambda^t\big)^{-1}(\mathbb{S}_{2\pi} \times \mathrm{B})\right)\\
	&=\nu_\lambda\big(\mathbb{S}_{2\pi} \times \mathrm{B}\cdot(-t)\big)=\nu_\lambda\circ\Pi^{-1}\big(\mathrm{B}\cdot(-t)\big),
	\end{align*}
where \x{pro-mea1} is used. Since $t$ is arbitrary, the proof is complete. \end{proof}

\subsection{Rotation number}
In this subsection, we first prove the existence of the rotation number and then derive its fundamental properties. Recalling Lemma \ref{eqs}, we introduce the following two notations.
	\begin{equation*}
	\bar{f}_\lambda(\mathtt{\Theta},\widetilde{\textbf{v}}):=\lim\limits_{x\rightarrow +\infty}\frac{\theta_\lambda(x,\mathtt{\Theta};\widetilde{\textbf{v}})-\mathtt{\Theta}}{x}\qquad \mbox{for~}(\mathtt{\Theta},\widetilde{\textbf{v}})\in\R \times \mathrm{E}(\textbf{v}),
	\end{equation*} 
and
	\begin{equation}\label{hatf}
	\mathring{f}_\lambda(\vartheta,\widetilde{\textbf{v}}):=\lim\limits_{x\rightarrow +\infty}\frac{1}{x}\int_{0}^{x}f_\lambda\big(\Psi_\lambda^s(\vartheta,\widetilde{\textbf{v}})\big)\mathrm{d}s \qquad \mbox{for~} (\vartheta,\widetilde{\textbf{v}}) \in \mathrm{Z},
	\end{equation}
if this limit exists. Then we have
\begin{lem}\label{ind}
	If $\mathring{f}_\lambda(\vartheta_0,\widetilde{\bf{v}}_0)$ exists for some $(\vartheta_0,\widetilde{\bf{v}}_0) \in \mathrm{Z}$, then $\mathring{f}_\lambda(\vartheta,\widetilde{\bf{v}}_0)$ exists for all $\vartheta \in \mathbb{S}_{2\pi}$ and is independent of the choice of $\vartheta$.
\end{lem}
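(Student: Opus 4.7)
The plan is to translate the question from averages of $f_\lambda$ along the skew-product back into statements about the Pr\"ufer angle $\theta_\lambda(x,\mathtt{\Theta};\widetilde{\bf{v}}_0)$ via Lemma~\ref{eqs}, and then to exploit two robust features of the angle: monotonicity in the initial value (Lemma~\ref{lm-theta}~{\rm\romannumeral4)}) and $2\pi$-equivariance (Lemma~\ref{lm-theta}~{\rm\romannumeral1)}).

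First, fix $\vartheta \in \mathbb{S}_{2\pi}$ and lifts $\mathtt{\Theta}, \mathtt{\Theta}_0 \in \R$ of $\vartheta$ and $\vartheta_0$ respectively. By Lemma~\ref{lm-theta}~{\rm\romannumeral1)}, shifting a lift by an integer multiple of $2\pi$ only shifts the resulting $\theta_\lambda$-trajectory by the same multiple of $2\pi$, so we may assume without loss of generality that $|\mathtt{\Theta}-\mathtt{\Theta}_0|\le 2\pi$. Then Lemma~\ref{lm-theta}~{\rm\romannumeral4)} together with Lemma~\ref{lm-theta}~{\rm\romannumeral1)} yields the key a priori bound
\[
\bigl|\theta_\lambda(x,\mathtt{\Theta};\widetilde{\bf{v}}_0)-\theta_\lambda(x,\mathtt{\Theta}_0;\widetilde{\bf{v}}_0)\bigr|\le 2\pi \qquad \mbox{for all~}x\in\R,
\]
since strict monotonicity sandwiches the larger initial condition between the smaller one and its $2\pi$-shift, whose trajectory by {\rm\romannumeral1)} is merely the original trajectory translated by $2\pi$.

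Second, Lemma~\ref{eqs} gives the identifications
\[
\frac{1}{x}\int_{0}^{x}f_\lambda\bigl(\Psi_\lambda^s(\vartheta,\widetilde{\bf{v}}_0)\bigr)\mathrm{d}s=\frac{\theta_\lambda(x,\mathtt{\Theta};\widetilde{\bf{v}}_0)-\mathtt{\Theta}}{x},
\]
and likewise for $\vartheta_0$. Subtracting, applying the triangle inequality, and using the bound above together with $|\mathtt{\Theta}-\mathtt{\Theta}_0|\le 2\pi$, one obtains
\[
\left|\frac{1}{x}\int_{0}^{x}\!f_\lambda\bigl(\Psi_\lambda^s(\vartheta,\widetilde{\bf{v}}_0)\bigr)\mathrm{d}s-\frac{1}{x}\int_{0}^{x}\!f_\lambda\bigl(\Psi_\lambda^s(\vartheta_0,\widetilde{\bf{v}}_0)\bigr)\mathrm{d}s\right|\le \frac{4\pi}{x}
\]
for all $x>0$. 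Letting $x\to+\infty$ and using the assumed existence of $\mathring{f}_\lambda(\vartheta_0,\widetilde{\bf{v}}_0)$ shows that $\mathring{f}_\lambda(\vartheta,\widetilde{\bf{v}}_0)$ exists and equals $\mathring{f}_\lambda(\vartheta_0,\widetilde{\bf{v}}_0)$, as required.

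No part of the argument is truly subtle; the only thing one must be careful about is passing cleanly between $\vartheta \in \mathbb{S}_{2\pi}$ and its real lift $\mathtt{\Theta}$, so that the equivariance in {\rm\romannumeral1)} really does allow us to reduce to lifts within a common fundamental domain of length $2\pi$. Once that reduction is made, the monotonicity in {\rm\romannumeral4)} does all the work.
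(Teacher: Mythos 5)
Your proof is correct and takes essentially the same approach as the paper: both arguments use the $2\pi$-equivariance (Lemma \ref{lm-theta}~\romannumeral1)) to reduce to nearby lifts, then use monotonicity in the initial condition (Lemma \ref{lm-theta}~\romannumeral4)) to sandwich $\theta_\lambda(x,\mathtt{\Theta};\widetilde{\bf{v}}_0)$ between $\theta_\lambda(x,\mathtt{\Theta}_0;\widetilde{\bf{v}}_0)$ and its $2\pi$-translate, and then pass back through Lemma \ref{eqs}. Your version merely makes the resulting $O(1/x)$ bound explicit, whereas the paper leaves that final step implicit.
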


\begin{proof}
By Lemma \ref{eqs}, we know that there exists a $\mathtt{\Theta}_0 \in \mathbb{R}$ satisfying $\vartheta_0=\mathtt{\Theta}_0\mod 2\pi$ such that $\bar{f}_\lambda(\mathtt{\Theta}_0,\widetilde{\textbf{v}}_0)$ exists. By Lemma \ref{lm-theta} {\rm {\romannumeral1})}, we have that $\bar{f}_\lambda(\mathtt{\Theta}_0+2k\pi,\widetilde{\textbf{v}}_0)$ exist for all $k\in\mathbb{Z}$. Moreover, for each $\mathtt{\Theta}\in\mathbb{R}$, there exists a $k_{\mathtt{\Theta}}\in\mathbb{Z}$ such that
$\mathtt{\Theta}_0+2k_{\mathtt{\Theta}}\pi\leq\mathtt{\Theta}<\mathtt{\Theta}_0+2(k_{\mathtt{\Theta}}+1)\pi$. By Lemma \ref{lm-theta} {\rm {\romannumeral1})} and {\rm {\romannumeral4})}, we have that 
	\begin{equation*}
	\begin{split}
		\theta_\lambda(x,\mathtt{\Theta}_0+2k_{\mathtt{\Theta}}\pi;\widetilde{\textbf{v}})&\leq \theta_\lambda(x,\mathtt{\Theta};\widetilde{\textbf{v}})<\theta_\lambda(x,\mathtt{\Theta}_0+2(k_{\mathtt{\Theta}}+1)\pi;\widetilde{\textbf{v}})\\
		&=\theta_\lambda(x,\mathtt{\Theta}_0+2k_{\mathtt{\Theta}}\pi;\widetilde{\textbf{v}})+2\pi \qquad \text{for all~}x>0.
	\end{split}
	\end{equation*}
This implies that $\bar{f}_\lambda(\mathtt{\Theta},\widetilde{\textbf{v}}_0)$ exists as well for all $\mathtt{\Theta}\in\mathbb{R}$ and is independent of the choice of $\mathtt{\Theta}$. By Lemma \ref{eqs} again, we obtain the desired result.
\end{proof}

We present the following uniformly ergodic theorem by Johnson and Moser, which will be used to establish the existence of the rotation number. For its discrete version, see \cite{He83}.

\begin{lem}[{\cite[Lemma 4.4]{JM1982}}]\label{jm}
Let $\big\lbrace\Psi^t\big\rbrace_{t\in\mathbb{R}}$ be a continuous dynamical system on a compact metric space $X$. Then for any continuous function $g:X \to \K$ satisfying
	\[
	\int_X g(x)\mathrm{d}\nu=0
	\]
for all invariant Borel probability measures $\nu$ of $\big\lbrace \Psi^t\big\rbrace$, one has
	\[
	\lim\limits_{t\rightarrow+\infty}\frac{1}{t}\int_{0}^{t}g\big(\Psi^s(x)\big)\mathrm{d}s=0
	\]
uniformly for all $x\in X$.
\end{lem}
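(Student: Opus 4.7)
The plan is to argue by contradiction via a classical weak-$\ast$ compactness argument on the space of Borel probability measures on $X$. Suppose the conclusion fails; then there exist $\epsilon_0>0$ together with sequences $t_n \to +\infty$ and $x_n \in X$ such that
\[
\left| \frac{1}{t_n}\int_0^{t_n} g\big(\Psi^s(x_n)\big)\,\mathrm{d}s \right| \geq \epsilon_0
\qquad\text{for all } n\in\mathbb{N}.
\]
Define the \emph{empirical measures}
\[
\mu_n := \frac{1}{t_n}\int_0^{t_n} \delta_{\Psi^s(x_n)}\,\mathrm{d}s,
\]
which are Borel probability measures on $X$. Since $X$ is a compact metric space, the space of Borel probability measures on $X$ is weak-$\ast$ compact and metrizable, so there is a subsequence (still denoted $\{\mu_n\}$) converging in the weak-$\ast$ topology to some Borel probability measure $\mu$ on $X$.

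The next step is to verify that $\mu$ is $\{\Psi^t\}$-invariant. For each fixed $t \in \mathbb{R}$ and each continuous $h: X \to \mathbb{K}$, a direct computation gives
\[
\int_X h\,\mathrm{d}(\mu_n\circ\Psi^{-t}) - \int_X h\,\mathrm{d}\mu_n
= \frac{1}{t_n}\left( \int_t^{t_n+t} h\big(\Psi^s(x_n)\big)\,\mathrm{d}s - \int_0^{t_n} h\big(\Psi^s(x_n)\big)\,\mathrm{d}s \right),
\]
whose absolute value is bounded by $2|t|\,\|h\|_\infty / t_n \to 0$ as $n \to \infty$. Passing to the weak-$\ast$ limit, $\int h\,\mathrm{d}(\mu\circ\Psi^{-t}) = \int h\,\mathrm{d}\mu$ for every continuous $h$, hence $\mu\circ\Psi^{-t}=\mu$.

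Now one uses the hypothesis: by assumption $\int_X g\,\mathrm{d}\mu = 0$. On the other hand, by construction $\int_X g\,\mathrm{d}\mu_n = \frac{1}{t_n}\int_0^{t_n} g(\Psi^s(x_n))\,\mathrm{d}s$, so $\left|\int_X g\,\mathrm{d}\mu_n\right| \geq \epsilon_0$. Since $g$ is continuous on the compact set $X$, weak-$\ast$ convergence of $\mu_n$ to $\mu$ yields $\int g\,\mathrm{d}\mu_n \to \int g\,\mathrm{d}\mu$, forcing $\left|\int g\,\mathrm{d}\mu\right| \geq \epsilon_0 > 0$ in contradiction with the hypothesis. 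This contradiction completes the proof.

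The only subtle point I expect is the verification of invariance of $\mu$: it must be done via the continuous observables $h$ (not via sets) so as to pass from weak-$\ast$ convergence of $\mu_n$ to the analogous statement for $\mu$. Everything else (compactness of the probability measures, and the bound $2|t|\|h\|_\infty/t_n$ on the defect from invariance of $\mu_n$) is routine on a compact metric space.
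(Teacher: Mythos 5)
Your proof is correct and follows the standard Krylov--Bogoliubov-type argument: contradiction via weak-$\ast$ compactness of empirical measures $\mu_n=\frac{1}{t_n}\int_0^{t_n}\delta_{\Psi^s(x_n)}\,\mathrm{d}s$, the $O(|t|/t_n)$ defect-from-invariance estimate, and passage to an invariant limit measure $\mu$ on which $\int g\,\mathrm{d}\mu$ is simultaneously $0$ (by hypothesis) and bounded below by $\epsilon_0$. The paper itself gives no proof and simply cites \cite{JM1982}; the argument there is essentially the same as yours, so you have supplied the missing proof by the canonical route.
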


\begin{proof}[Proof of Theorem \ref{thm-rn}.]
Using the argument of Johnson and Moser \cite{JM1982}, we consider the skew-product $\big\{\Psi_\lambda^t\big\}_{t\in \R}$ on $\mathrm{Z}$. The crucial difference is that we adopt the joint hull $\mathrm{E}(\textbf{v})$.

Step 1: Let $\nu_\lambda$ be any invariant probability measure on the compact metric space $\mathrm{Z}$. By the Birkhoff ergodic theorem, there exists a Borel set $\mathrm{Z}_0\subseteq\mathrm{Z}$ that depends on the choice of $\nu_\lambda$ such that $\nu_\lambda(\mathrm{Z}_0)=1$ and $\mathring{f}_\lambda(\vartheta,\widetilde{\textbf{v}})$ in (\ref{hatf})
exists for all $(\vartheta,\widetilde{\textbf{v}})\in\mathrm{Z}_0$. By Lemma \ref{ind}, $\mathrm{Z}_0$ can be expressed as 
	\begin{equation*}
	\mathrm{Z}_0=\mathbb{S}_{2\pi}\times \mathrm{E}_0.
	\end{equation*}
Combining this with Lemma \ref{proj-me}, we obtain 
	\begin{equation} \label{thm-rn0}
	\mu_{\mathrm{E}(\textbf{v})}(\mathrm{E}_0)=\nu_\lambda\circ\Pi^{-1}(\mathrm{E}_0)=\nu_\lambda(\mathbb{S}_{2\pi} \times \mathrm{E}_0)=1.
	\end{equation}
Meanwhile, $\mathring{f}_\lambda(\vartheta,\widetilde{\textbf{v}})\in \mathcal{L}^1(\mathrm{Z},\mathrm{d}\nu_\lambda)$ is invariant under $\big\{\Psi_\lambda^t\big\}$ and satisfies
	\begin{equation}\label{thm-rn1}
	\int_\mathrm{Z}\mathring{f}_\lambda(\vartheta,\widetilde{\textbf{v}})\mathrm{d}\nu_\lambda=\int_\mathrm{Z}f_\lambda(\vartheta,\widetilde{\textbf{v}})\mathrm{d}\nu_\lambda=:\rho(\lambda,\textbf{v}),
	\end{equation}
where $\rho(\lambda,\textbf{v}) \in \R$ is a constant that depends on $\lambda$ and $\textbf{v}$. 

Step 2: By Lemma \ref{ind} and (\ref{thm-rn0}), $\mathring{f}_\lambda(\vartheta,\widetilde{\textbf{v}})$ can be regarded as a function on $\mathrm{E}(\textbf{v})$. We still denote
	\begin{equation} \label{thm-rn2}
	\mathring{f}_\lambda(\widetilde{\textbf{v}}):=\mathring{f}_\lambda(\vartheta,\widetilde{\textbf{v}}) \quad \mbox{for~} \mu_{\mathrm{E}(\textbf{v})}\mbox{-almost~everywhere~}\widetilde{\textbf{v}} \in \mathrm{E}(\textbf{v}).
	\end{equation}
By \x{skew}, $\mathring{f}_\lambda(\widetilde{\textbf{v}})$ is invariant under the shift (\ref{sft-v}) on $\mathrm{E}(\textbf{v})$. By Lemma \ref{atg} {\rm {\romannumeral4})}, we know that the Haar measure $\mu_{\mathrm{E}(\textbf{v})}$ is ergodic. This implies that $\mathring{f}_\lambda(\widetilde{\textbf{v}})$ is constant for $\mu_{\mathrm{E}(\textbf{v})}$-almost every points. Combining this with \x{thm-rn1} and \x{thm-rn2}, we have
	\begin{equation} \label{thm-rn3}
	\mathring{f}_\lambda(\widetilde{\textbf{v}})=\rho(\lambda,\textbf{v})\qquad \mbox{for~all~}\widetilde{\textbf{v}} \in \mathrm{E}_1 \subseteq \mathrm{E}(\textbf{v}),
	\end{equation}
where $\mathrm{E}_1$ is a Borel set of $\mu_{\mathrm{E}(\textbf{v})}$-full measure.

Step 3: We claim that $\rho(\lambda,\textbf{v})$ is independent of the choice of the measure $\nu_\lambda$. To this end, assume that there exists another measure $\tilde{\nu}_\lambda$. As in Step 1 and Step 2, there exists another Borel set $\tilde{\mathrm{E}}_1 \subseteq \mathrm{E}(\textbf{v})$ of $\mu_{\mathrm{E}(\textbf{v})}$-full measure and a corresponding constant $\tilde{\rho}(\lambda,\textbf{v})$. We then obtain $\mu_{\mathrm{E}(\textbf{v})}(\mathrm{E}_1 \cap \tilde{\mathrm{E}}_1)=1$. This implies that $\mathrm{E}_1 \cap \tilde{\mathrm{E}}_1 \neq\emptyset$. Choosing a point $\widetilde{\textbf{v}}_0\in \mathrm{E}_1 \cap \tilde{\mathrm{E}}_1$, we have
	\[
	\rho(\lambda,\textbf{v})=\mathring{f}_\lambda(\vartheta,\widetilde{\textbf{v}}_0)=\tilde{\rho}(\lambda,\textbf{v})\qquad \mbox{for~all~}\vartheta\in\mathbb{S}_{2\pi},
	\]
where (\ref{hatf}), (\ref{thm-rn2}) and (\ref{thm-rn3}) are used. 

Step 4: Denote the function $g_\lambda$ by 	
	\[
	g_\lambda(\vartheta,\widetilde{\textbf{v}}):=f_\lambda(\vartheta,\widetilde{\textbf{v}})-\rho(\lambda,\textbf{v}).
	\]
It follows from Lemma \ref{lm-ds} and Lemma \ref{lm-f} that $\big\lbrace\Psi_\lambda^t\big\rbrace$ and  $g_\lambda$ satisfy all the requirements of Lemma \ref{jm}. As $x\rightarrow +\infty$, we then obtain
	\begin{equation}\label{thm-rn4}
	\frac{1}{x}\int_{0}^{x}g_\lambda\big(\Psi_\lambda^s(\vartheta,\widetilde{\textbf{v}})\big)\mathrm{d}s=\frac{1}{x}\int_{0}^{x}f_\lambda\big(\Psi_\lambda^s(\vartheta,\widetilde{\textbf{v}})\big)\mathrm{d}s-\rho(\lambda,\textbf{v})\rightarrow 0
	\end{equation}
uniformly for all $(\vartheta,\widetilde{\textbf{v}})\in\mathrm{Z}$. This implies that $\mathring{f}_\lambda(\vartheta,\widetilde{\textbf{v}})$ exists for all $(\vartheta,\widetilde{\textbf{v}})\in\mathrm{Z}$.

Finally, taking $\widetilde{\textbf{v}}=\textbf{v}$ in (\ref{thm-rn4}), we complete the proof.
\end{proof}

\begin{rem} \label{re-rn}
	{\rm \romannumeral1): For each $\widetilde{\textbf{v}}\in\mathrm{E}(\textbf{v})$, denote the rotation number of \x{tau-lam} by $\rho(\lambda,\widetilde{\textbf{v}})$. By \rm{\x{thm-rn4}}, we have $\rho(\lambda,\widetilde{\textbf{v}})\equiv \rho(\lambda,\textbf{v})$.\\[1mm]
	\romannumeral2): According to \x{thm-rn1}, the rotation number admits an ergodic representation in terms of the space average as follows.
		\[
		\rho(\lambda,\textbf{v})=\int_\mathrm{Z}f_\lambda(\vartheta,\widetilde{\textbf{v}})\mathrm{d}\nu_\lambda,
		\]
	where $\nu_\lambda$ is any invariant Borel probability measure of $\big\{\Psi^t_\lambda\big\}$. Note that $\rho(\lambda,\textbf{v})$ is independent of the choice of $\nu_\lambda$.
	}
\end{rem}

\begin{lem}\label{rn-cont}
	Let $\bf{v}$ be fixed. Then $\rho(\lambda,{\bf{v}})$ is continuous with respect to\/ $\lambda$.
\end{lem}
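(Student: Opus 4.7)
The plan is to combine three ingredients: the uniform-in-phase-space convergence of the Pr\"ufer angles obtained in Step~4 of the proof of Theorem~\ref{thm-rn}, a Lipschitz estimate of $\theta_\lambda(X,\mathtt{\Theta};\widetilde{\bf v})$ in the parameter $\lambda$ for fixed finite $X$, and the cocycle identity of Lemma~\ref{lm-theta}~\romannumeral2), which allows us to promote a one-step bound at time $X$ to the asymptotic bound defining $\rho(\lambda,{\bf v})$.

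In detail: fix $\lambda_0\in\R$ and $\epsilon>0$. First I would use Lemma~\ref{eqs} together with the uniform convergence in \xx{thm-rn4} to select $X>0$ with
\[
\Big|\tfrac{\theta_{\lambda_0}(X,\mathtt{\Theta};\widetilde{\bf v})-\mathtt{\Theta}}{X}-\rho(\lambda_0,{\bf v})\Big|<\tfrac{\epsilon}{2}
\qquad\text{for all } \mathtt{\Theta}\in[0,2\pi),\ \widetilde{\bf v}\in\mathrm{E}({\bf v}),
\]
extending this to all $\mathtt{\Theta}\in\R$ via the $2\pi$-periodicity of Lemma~\ref{lm-theta}~\romannumeral1). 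Next, a Gronwall argument mirroring the proof of Lemma~\ref{lm-theta}~\romannumeral3), but now comparing solutions of \xx{ftheta} for two different values of $\lambda$ rather than two different $\widetilde{\bf v}$, would furnish a constant $C=C(X,\|{\bf v}\|_\infty)$ with
\[
\big|\theta_\lambda(X,\mathtt{\Theta};\widetilde{\bf v})-\theta_{\lambda_0}(X,\mathtt{\Theta};\widetilde{\bf v})\big|\le C\,|\lambda-\lambda_0|
\]
uniformly for $(\mathtt{\Theta},\widetilde{\bf v})\in\R\times\mathrm{E}({\bf v})$, the uniformity across the hull being a consequence of $\|\widetilde{\bf v}\|_\infty=\|{\bf v}\|_\infty$. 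Choosing $\delta>0$ with $C\delta/X<\epsilon/2$, one obtains for $|\lambda-\lambda_0|<\delta$ the uniform estimate
\[
\Big|\tfrac{\theta_\lambda(X,\mathtt{\Theta};\widetilde{\bf v})-\mathtt{\Theta}}{X}-\rho(\lambda_0,{\bf v})\Big|<\epsilon,\qquad \mathtt{\Theta}\in\R,\ \widetilde{\bf v}\in\mathrm{E}({\bf v}).
\]
I would then iterate: setting $\Theta_k:=\theta_\lambda(kX,0;{\bf v})$, Lemma~\ref{lm-theta}~\romannumeral2) gives $\Theta_{k+1}=\theta_\lambda(X,\Theta_k;{\bf v}\cdot kX)$ with ${\bf v}\cdot kX\in\mathrm{E}({\bf v})$, so the above display yields $|\Theta_{k+1}-\Theta_k-X\rho(\lambda_0,{\bf v})|<\epsilon X$; telescoping over $k=0,\dots,n-1$, dividing by $nX$, and letting $n\to\infty$ while invoking Theorem~\ref{thm-rn} for the parameter $\lambda$ gives $|\rho(\lambda,{\bf v})-\rho(\lambda_0,{\bf v})|\le\epsilon$, as required.

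The main obstacle is the Lipschitz estimate in $\lambda$: without uniformity in $(\mathtt{\Theta},\widetilde{\bf v})$ the one-step bound could not be chained through the cocycle without losing control. Fortunately this step is essentially a rerun of the Gronwall computation used in Lemma~\ref{lm-theta}~\romannumeral3): the inhomogeneous term becomes $(\lambda-\lambda_0)\widetilde{w}(s)\sin^2\theta_{\lambda_0}(s)$, which is bounded by $|\lambda-\lambda_0|\,\|{\bf v}\|_\infty$, and the exponential Gronwall factor depends only on $X$ and $\|{\bf v}\|_\infty$, so the constant $C$ is indeed uniform over the hull.
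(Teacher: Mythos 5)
Your proposal is correct, and it takes a genuinely different route from the paper. The paper's proof works at the level of invariant measures: it uses the ergodic representation $\rho(\lambda,{\bf v})=\int_{\mathrm Z} f_\lambda\,\mathrm{d}\nu_\lambda$ from Remark~\ref{re-rn}~\romannumeral2), extracts a weak-$\star$ limit $\nu_\infty$ of the measures $\nu_{\lambda_j}$, shows (via the uniform convergence $\theta_{\lambda_j}(t,\cdot\,;\cdot)\to\theta_{\lambda_0}(t,\cdot\,;\cdot)$, itself coming from Lemma~\ref{th-la} and Lemma~\ref{lm-theta}~\romannumeral3)) that $\nu_\infty$ is $\Psi^t_{\lambda_0}$-invariant, and then concludes by comparing the two integrals $\int f_{\lambda_j}\,\mathrm{d}\nu_{\lambda_j}$ and $\int f_{\lambda_0}\,\mathrm{d}\nu_\infty$. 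Your argument is a direct, quantitative one: you use the uniform ergodic convergence in \eqref{thm-rn4} at $\lambda_0$ to fix a finite time $X$, a Gronwall estimate in $\lambda$ (the analogue of Lemma~\ref{lm-theta}~\romannumeral3)) to control the one-step Pr\"ufer error, and the cocycle identity to telescope along the orbit. This avoids the weak-$\star$ compactness argument and the nontrivial step of verifying that the limit measure is $\Psi^t_{\lambda_0}$-invariant; in exchange it leans on the uniformity in $(\vartheta,\widetilde{\bf v})$ supplied by \eqref{thm-rn4}, which the paper also proves but uses only to pass from almost-everywhere to everywhere existence.

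One small imprecision to flag: in the Gronwall step your constant is written as $C=C(X,\|{\bf v}\|_\infty)$, but the exponential factor involves $\lambda\widetilde w-\widetilde q$, so $C$ also depends on $\lambda_0$ (and, harmlessly, on a fixed bound for $|\lambda-\lambda_0|$, say $|\lambda-\lambda_0|\le 1$). Since $\lambda_0$ is fixed throughout the argument this does not affect the conclusion, but it should be recorded so the uniformity claim over the hull is stated honestly.
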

\begin{proof}
It is equivalent to show that for any sequence $\lambda_j \to \lambda_0 \in \R$, the following holds
	\begin{equation} \label{rn-cont1}
	\rho(\lambda_j,{\bf{v}}) \to \rho(\lambda_0,{\bf{v}})\qquad \mbox{as~} j \to +\infty.
	\end{equation}
By Remark \ref{re-rn} \romannumeral2), we have 
	\begin{equation} \label{rn-cont1.1}
	\rho(\lambda_j,\textbf{v})=\int_\mathrm{Z}f_{\lambda_j}(\vartheta,\widetilde{\textbf{v}})\mathrm{d}\nu_{\lambda_j} \qquad \mbox{for~any~} j\in\N,
	\end{equation}
where $f_{\lambda_j}(\vartheta,\widetilde{\textbf{v}})$ is given by (\ref{df-f}), and $\nu_{\lambda_j}$ is any invariant Borel probability measure of $\big\{\Psi^t_{\lambda_j}\big\}$. Since $\mathrm{Z}$ is a compact metric space, by \cite[Theorem 6.5]{Wa82}, we may assume that there exists a Borel probability measure on $\mathrm{Z}$, denoted by $\nu_{\infty}$, such that 
	\begin{equation} \label{rn-cont1.5}
	\nu_{\lambda_j} \rightharpoonup \nu_{\infty} \mbox{~in~the~weak-$\star$~topology}.
	\end{equation}

We claim that $\nu_{\infty}$ is an invariant Borel probability measure of $\big\{\Psi^t_{\lambda_0}\big\}$ on $\mathrm{Z}$. By \cite[Theorem 6.8]{Wa82}, it is equivalent to show that for any continuous function $f:\mathrm{Z} \to \R$, the following holds
	\[
	\int_{\mathrm{Z}} f(\vartheta,\widetilde{\textbf{v}}) \rd \nu_{\infty}= \int_{\mathrm{Z}} f\big( \Psi^t_{\lambda_0}(\vartheta,\widetilde{\textbf{v}})\big) \rd \nu_{\infty} \qquad \mbox{for~all~} t \in \R.
	\]
Without loss of generality, it suffices to verify the case $t>0$. To this end, for any fixed $t \in \R_+$ and any sequence $\lambda_j \to \lambda_0$, by Lemma \ref{th-la} and Lemma \ref{lm-theta} \romannumeral3), we obtain
	\[
	\theta_{\lambda_j}(t,\mathtt{\Theta};\widetilde{\bf{v}}) \to \theta_{\lambda_0}(t,\mathtt{\Theta};\widetilde{\bf{v}}) \qquad \mbox{uniformly~for~all~}\mathtt{\Theta} \in \R \mbox{~and~} \widetilde{\bf{v}} \in \mathrm{E}(\textbf{v}).
	\]
Combining this with \x{skew}, we obtain 
	\begin{equation} \label{rn-cont2}
	f\big(\Psi^t_{\lambda_j} (\vartheta,\widetilde{\textbf{v}})\big) \to  f\big(\Psi^t_{\lambda_0} (\vartheta,\widetilde{\textbf{v}})\big) \qquad \mbox{uniformly~for~all~}(\vartheta,\widetilde{\textbf{v}})\in \mathrm{Z}.
	\end{equation}
Since $\nu_{\lambda_j}$ is invariant under $\big\{\Psi^t_{\lambda_j}\big\}$, we then have
	\[
	\int_{\mathrm{Z}} f(\vartheta,\widetilde{\textbf{v}}) \rd \nu_{\lambda_j}= \int_{\mathrm{Z}} f\big( \Psi^t_{\lambda_j}(\vartheta,\widetilde{\textbf{v}})\big) \rd \nu_{\infty}.
	\]
It follows from \x{rn-cont1.5} and \x{rn-cont2} that
	\begin{equation*}
	\begin{split}
		\int_{\mathrm{Z}}f(\vartheta,\widetilde{\textbf{v}})\mathrm{d}\nu_{\lambda_j}
		&=\int_{\mathrm{Z}}\big(f\circ\Psi_{\lambda_j}^t-f\circ\Psi_{\lambda_0}^t\big)(\vartheta,\widetilde{\textbf{v}})\mathrm{d}\nu_{\lambda_j}+\int_{\mathrm{Z}}f\big(\Psi_{\lambda_0}^t(\vartheta,\widetilde{\textbf{v}})\big)\mathrm{d}\nu_{\lambda_j} \\
		&\rightarrow \int_{\mathrm{Z}}f\big(\Phi_{\lambda_0}^t(\vartheta,\widetilde{\textbf{v}})\big)\mathrm{d}\nu_{\infty} \qquad \mbox{as~} j \to +\infty.
	\end{split}
	\end{equation*}
By \x{rn-cont1.5} again, we have 
	\[
	\int_{\mathrm{Z}}f(\vartheta,\widetilde{\textbf{v}})\mathrm{d}\nu_{\lambda_j}\rightarrow \int_{\mathrm{Z}}f(\vartheta,\widetilde{\textbf{v}})\mathrm{d}\nu_{\infty} \qquad \mbox{as~} j \to +\infty.
	\]
The proof of the claim is complete.

By Remark \ref{re-rn} \romannumeral2) and this claim, we have
	\[	\rho(\lambda_0,\textbf{v})=\int_\mathrm{Z}f_{\lambda_0}(\vartheta,\widetilde{\textbf{v}})\mathrm{d}\nu_{\infty}.
	\] 
Combining this with \x{rn-cont1.1}, we obtain
	\begin{equation*}
	\begin{split}
		& \left| \rho(\lambda_j,{\bf{v}})-\rho(\lambda_0,{\bf{v}})\right| 
		=\left|\int_\mathrm{Z}f_{\lambda_j}(\vartheta,\widetilde{\textbf{v}})\mathrm{d}\nu_{\lambda_j} -\int_\mathrm{Z}f_{\lambda_0}(\vartheta,\widetilde{\textbf{v}})\mathrm{d}\nu_{\infty}\right|\\
		&\qquad \leq \left| \int_{\mathrm{Z}}\big(f_{\lambda_j}-f_{\lambda_0}\big)(\vartheta,\widetilde{\textbf{v}})\mathrm{d}\nu_j\right| +\left|\int_\mathrm{Z}f_{\lambda_0}(\vartheta,\widetilde{\textbf{v}})\mathrm{d}\nu_{\lambda_j}-\int_\mathrm{Z}f_{\lambda_0}(\vartheta,\widetilde{\textbf{v}})\mathrm{d}\nu_{\infty} \right| \\
		& \qquad \leq|\lambda_j-\lambda_0|\|\widetilde{w}\|_{\infty}+\left|\int_\mathrm{Z}f_{\lambda_0}(\vartheta,\widetilde{\textbf{v}})\mathrm{d}\nu_{\lambda_j}-\int_\mathrm{Z}f_{\lambda_0}(\vartheta,\widetilde{\textbf{v}})\mathrm{d}\nu_{\infty} \right| \rightarrow 0,
	\end{split}
	\end{equation*}
where \x{df-f}, Lemma \ref{lm-f} and \x{rn-cont1.5} are used. We have the desired result \x{rn-cont1}.\end{proof} 

Let $\phi(x):=\phi(x,\lambda;\bf{v})$ be a non-trivial solution of \x{tau-lam} with $\widetilde{\bf v}=\bf{v}$. Due to \x{v} and Lemma \ref{lm-1/f} \romannumeral1), $\phi(x)$ only has non-degenerate zeros, that is, if $\phi(x)=0$ then $\phi'(x)\neq 0$. For any fixed $\lambda$ and $\bf{v}$, we define
	\[
	N_{\phi}(x,\lambda;{\bf{v}}):=\sharp\{s\in [0,x):\phi(s,\lambda;{\bf{v}})=0\} \qquad \mbox{for~}x \in \R_+.
	\]
 In addition to the ergodic representation of rotation numbers given in Remark \ref{re-rn} \romannumeral2), we introduce the following geometric representation of rotation numbers, which will be used in the next section.

\begin{lem} \label{geo-rn}
	Let $\lambda$ and ${\bf{v}}$ be fixed. Then we have
		\begin{equation*}
		\lim\limits_{x\rightarrow+\infty}\frac{\pi N_\phi(x,\lambda;{\bf{v}})}{x}=\rho(\lambda,{\bf{v}}),
		\end{equation*}
	where the limit is independent of the choice of non-trivial solutions\/ $\phi(x)$.
\end{lem}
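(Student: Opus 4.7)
The plan is to translate the zero-counting of $\phi$ into monodromy of the Pr\"ufer angle $\theta_\lambda(x,\mathtt{\Theta};{\bf v})$ introduced in \x{pufer}, then invoke Theorem \ref{thm-rn} directly. Write $p(x)\phi'(x)+\mathrm{i}\,\phi(x)=r(x)\mathrm{e}^{\mathrm{i}\theta(x)}$ with $r(x)>0$ (this is legitimate because solutions have only non-degenerate zeros, by ${\bf v}\in\mathcal{AP}_*(\R,\R^3)$ and Lemma \ref{lm-1/f} \romannumeral1)). Then $\phi(x)=r(x)\sin\theta(x)$, so $\phi(x)=0$ if and only if $\theta(x)\in\pi\Z$; hence $N_\phi(x,\lambda;{\bf v})$ equals the number of $s\in[0,x)$ with $\theta_\lambda(s,\mathtt{\Theta};{\bf v})\in\pi\Z$, where $\mathtt{\Theta}:=\theta_\lambda(0;{\bf v})$.

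First I would exploit the ODE \x{ftheta}: at any $s$ with $\theta(s)=k\pi$, we have $\sin\theta(s)=0$, so
\[
\theta'(s)=\frac{1}{p(s)}\cos^2(k\pi)=\frac{1}{p(s)}\geq \delta>0,
\]
by Lemma \ref{lm-1/f} \romannumeral1). Hence $\theta$ is strictly increasing through every level $\pi\Z$, which forces each level $k\pi$ to be attained at most once: if $\theta(s_1)=\theta(s_2)=k\pi$ with $s_1<s_2$, then $\theta$ would have to drop back down to $k\pi$ from above, contradicting $\theta'>0$ at that level. Consequently the multiples of $\pi$ crossed by $\theta$ on $[0,x)$ form an increasing run of consecutive integers $k$, and so
\[
N_\phi(x,\lambda;{\bf v})=\bigl|\{k\in\Z:\mathtt{\Theta}\leq k\pi<\theta_\lambda(x,\mathtt{\Theta};{\bf v})\}\bigr|+O(1),
\]
which yields the uniform bound
\[
\bigl|\pi N_\phi(x,\lambda;{\bf v})-\bigl(\theta_\lambda(x,\mathtt{\Theta};{\bf v})-\mathtt{\Theta}\bigr)\bigr|\leq 2\pi.
\]

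Dividing by $x$ and letting $x\to+\infty$ makes the error term vanish, so
\[
\lim_{x\to+\infty}\frac{\pi N_\phi(x,\lambda;{\bf v})}{x}=\lim_{x\to+\infty}\frac{\theta_\lambda(x,\mathtt{\Theta};{\bf v})-\mathtt{\Theta}}{x}=\rho(\lambda,{\bf v}),
\]
where the last equality is Theorem \ref{thm-rn}. Independence of the choice of non-trivial solution is inherited from Theorem \ref{thm-rn}: replacing $\phi$ by another solution only changes $\mathtt{\Theta}$, which by Lemma \ref{ind} (or directly by Theorem \ref{thm-rn}) does not affect the limit. I do not anticipate a real obstacle here; the only point to be careful about is the bookkeeping of boundary cases (when $\mathtt{\Theta}$ or $\theta(x)$ happens to lie in $\pi\Z$), which is absorbed into the $2\pi$ defect and washed out after division by $x$.
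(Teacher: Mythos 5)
Your proof is correct and follows exactly the same route as the paper's: express zeros of $\phi$ as crossings of $\pi\Z$ by the Pr\"ufer angle, note that $\theta'=1/p>0$ at every such crossing so each level $k\pi$ is crossed at most once (and only upward), deduce the bound $\bigl|\pi N_\phi(x,\lambda;{\bf v})-(\theta_\lambda(x,\mathtt{\Theta};{\bf v})-\mathtt{\Theta})\bigr|\leq 2\pi$, and divide by $x$ invoking Theorem \ref{thm-rn}. You have simply spelled out the intermediate step (why the crossed levels form a consecutive run, giving the $2\pi$ defect) that the paper leaves implicit; the content is identical.
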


\begin{proof}
By \x{pufer}, the condition $\phi(x)=0$ is equivalent to $\theta(x)=k\pi$ for some $k \in \Z$. Recalling (\ref{ftheta}), we obtain
	\[
	\left.\frac{\mathrm{d}\theta(x)}{\mathrm{d}x}\right|_{\theta(x)=k\pi}=\frac{1}{p(x)}>0.
	\]
This implies that 
	\[
	\big|(\theta_\lambda(x,\mathtt{\Theta};{\bf{v}})-\theta_\lambda(0,\mathtt{\Theta};{\bf{v}}))-\pi \cdot N_\phi(x,\lambda;{\bf{v}})\big|<2\pi \qq \mbox{for~} x \in \R_+.
	\]
Combining this with Theorem \ref{thm-rn}, we complete the proof.
\end{proof}

Moreover, we have
\begin{lem}
	Let $\bf{v}$ be fixed. Then $\rho(\lambda,{\bf{v}})\geq0$ and it is non-decreasing with respect to\/ $\lambda$.
\end{lem}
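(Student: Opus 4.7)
The plan is to combine the geometric and Pr\"ufer-angle representations of $\rho(\lambda,\mathbf{v})$ that have already been established. Non-negativity is immediate from Lemma~\ref{geo-rn}: since $N_\phi(x,\lambda;\mathbf{v})$ is a counting function and therefore non-negative, dividing by $x>0$ and letting $x\to+\infty$ yields
\[
\rho(\lambda,\mathbf{v})=\lim_{x\to+\infty}\frac{\pi N_\phi(x,\lambda;\mathbf{v})}{x}\ge 0.
\]

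For the monotonicity in $\lambda$, my plan is to fix any $\mathtt{\Theta}\in\mathbb{R}$ and arbitrary $\lambda_1<\lambda_2$ in $\R$, then compare the Pr\"ufer angles pointwise. By Lemma~\ref{th-la}, for every $x\in\R_+$,
\[
\theta_{\lambda_1}(x,\mathtt{\Theta};\mathbf{v})<\theta_{\lambda_2}(x,\mathtt{\Theta};\mathbf{v}).
\]
Subtracting $\mathtt{\Theta}$, dividing by $x>0$, and letting $x\to+\infty$, we invoke Theorem~\ref{thm-rn} (together with the identification in Lemma~\ref{eqs}, which guarantees that the limit exists and is independent of the initial angle $\mathtt{\Theta}$) to pass to the limit on both sides and obtain $\rho(\lambda_1,\mathbf{v})\le \rho(\lambda_2,\mathbf{v})$.

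The whole argument is essentially immediate from the earlier lemmas, so there is no real obstacle; the only subtlety is that the strict inequality at each finite $x$ need not survive the limit, which is why we can only conclude the weak monotonicity asserted in the statement. A strict monotonicity statement on open intervals intersecting the spectrum would require a deeper argument using the Green's function machinery from Section~\ref{sec-sl}, but that is not needed here.
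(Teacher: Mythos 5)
Your proposal is correct and follows essentially the same route as the paper: non-negativity from the counting-function representation in Lemma~\ref{geo-rn}, and monotonicity by combining the strict monotonicity in $\lambda$ of the Pr\"ufer angle (Lemma~\ref{th-la}) with the existence of the limit from Theorem~\ref{thm-rn}. The paper states this more tersely and, in a following remark, notes that Sturm's comparison theorem would give an alternative argument, but the substance is the same.
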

\begin{proof}
The non-negativity of $\rho(\lambda,{\bf{v}})$ follows from the fact that $N(x,\lambda;{\bf{v}})\geq 0$ for all $\lambda\in\mathbb{R}$. By Lemma \ref{th-la} and Theorem \ref{thm-rn}, we know that  $\rho(\lambda,{\bf{v}})$ is non-decreasing. The proof is complete. \end{proof}

\begin{rem} 
	{\rm
		We can also use Sturm's comparison theorem to conclude that $\rho(\lambda,{\bf{v}})$ is non-decreasing.
	}
\end{rem}

\section{Gap labelling}\label{sec-gl}

\setcounter{equation}{0}

In this section, we focus on the proof of the gap labelling thereom for $L_{\bf{v}}$ in terms of rotation numbers.

\begin{proof}[Proof of Theorem \ref{thm-gl}.]
For $\lambda\in\mathbb{R}\setminus\sigma(L_{\bf{v}})$ and $\widetilde{\bf{v}}\in\mathrm{E}(\textbf{v})$, the functions $\phi_{\pm}(x,\lambda;\widetilde{\bf{v}})$ can be chosen to be real-valued and normalized so that $\mathrm{W}(\phi_{+}(\cdot,\lambda;\widetilde{\bf{v}}),\phi_{-}(\cdot,\lambda;\widetilde{\bf{v}});\widetilde{\bf{v}})=1$. By \x{green} and Lemma \ref{spe-inv}, we have
	\begin{equation} \label{thm-gl2}
	G(x,x,\lambda;\widetilde{\bf{v}})=\phi_{+}(x,\lambda;\widetilde{\bf{v}})\phi_{-}(x,\lambda;\widetilde{\bf{v}})\qquad \mbox{for~} \lambda\in\mathbb{R}\setminus\sigma(L_{\bf{v}}).
	\end{equation}
Then $G(x,x,\lambda;\widetilde{\bf{v}})=0$ if and only if $\phi_{+}(x,\lambda;\widetilde{\bf{v}})=0$ or $\phi_{-}(x,\lambda;\widetilde{\bf{v}})=0$. Thus we have 
	\[
	\left.\frac{\mathrm{d}}{\mathrm{d}x}G(x,x,\lambda;\widetilde{\bf{v}})\right|_{G(x,x,\lambda;\widetilde{\bf{v}})=0}=\pm \frac{1}{\widetilde{p}(x)} \neq 0.
	\]
Based on the above discussion and Lemma \ref{gg'ap}, we know that $G(x,x,\lambda;{\bf{v}})$ satisfies all the requirements of Lemma \ref{lm-fzero}. Then the following holds
	\[
	\lim\limits_{x\rightarrow +\infty}\frac{\pi\sharp\{s\in [0,x):G(s,s,\lambda;{\bf{v}})=0\}}{x}\in \mathcal{M}_{\bf{v}}.
	\]
By \x{thm-gl2} again, it is obvious that
	\[
	\sharp\{s\in [0,x):G(s,s,\lambda;{\bf{v}})=0\}= 	N_{\phi_+}(x,\lambda;{\bf{v}})+ N_{\phi_-}(x,\lambda;{\bf{v}}).
	\] 
Combining this with Lemma \ref{geo-rn}, we have 
	\[
	2\rho(\lambda,{\bf{v}})\in \mathcal{M}_{\bf{v}}\qquad \mbox{for~} \lambda\in\mathbb{R}\setminus\sigma(L_{\bf{v}}).
	\]
By Lemma \ref{rn-cont}, we know that $\rho(\lambda,\bf{v})$ is a constant in any open interval $\mathrm{J}$ of $\mathbb{R}\setminus \sigma(L_{\bf{v}})$.
\end{proof}

\begin{rem} 
	{\rm
		Similarly as in {\rm Remark \ref{re-rn} \romannumeral1)}, if we consider the case $\widetilde{\bf{v}}\in\mathrm{E}(\bf{v})$, we have
			\[
			2\rho(\lambda,\widetilde{\bf{v}})\in \mathcal{M}_{\bf{v}}\qquad \mbox{for~} \lambda\in \mathrm{J},
			\]
		where $\mathrm{J}$ is any open interval of $\mathbb{R}\setminus \sigma(L_{\widetilde{\bf{v}}})$.
	}
\end{rem}

\section*{Acknowledgments}

Y.\ W.\ and Z.\ Z.\ are indebted to the Faculty of Mathematics at the University of Vienna for its hospitality during the winter of 2025/26 and 2024/25 respectively, where some part of this work was done.
Y.\ W.\ and Z.\ Z.\ are supported in part by the National Natural Science Foundation of China (Grants No. 12271509, 12090010, 12090014).

\end{document}